\documentclass[a4paper,10pt]{amsart}
\usepackage{amstext, amsfonts,amsthm, amsmath, amssymb}
\usepackage[utf8]{inputenc}
\usepackage[colorlinks,linkcolor=blue,hyperindex]{hyperref}
\usepackage[justification=centering]{caption}
\usepackage{gensymb}
\usepackage{xcolor}
\usepackage{xfrac}  
\usepackage{multicol}
\usepackage{graphicx}
\usepackage{subfigure}
\usepackage{indentfirst}
\usepackage{caption} 
\usepackage{enumerate}
\usepackage[english]{babel}
\usepackage{fancyhdr}
\usepackage{refcount}
\usepackage{fullpage}
\usepackage{mathtools}
\usepackage{enumitem}
\usepackage{tabularx}
\usepackage{booktabs}
\usepackage{placeins}
\usepackage{comment}

\usepackage{color}

\renewenvironment{abstract}
{\quotation\small\noindent\rule{\linewidth}{.5pt}\par\smallskip
	{\centering\bfseries\abstractname\par}\medskip}
{\par\noindent\rule{\linewidth}{.5pt}\endquotation}

\newtheorem{The}{Theorem}[section]
\newtheorem{Cor}[The]{Corollary}
\newtheorem{Def}[The]{Definition}
\newtheorem{Lem}[The]{Lemma}
\newtheorem{Rem}[The]{Remark}
\newtheorem{Exam}[The]{Example}
\newtheorem{Prop}[The]{Proposition}
\newtheorem*{Propo*}{Proposition}

\newcommand{\imu}{\ensuremath{{\bf i}}}
\newcommand{\re}{\ensuremath{\mathbb{R}}}
\newcommand{\co}{\ensuremath{\mathbb{C}}}
\newcommand{\nm}{\ensuremath{\Vert}}

\newcommand{\abf}{\ensuremath{\textbf{a}}}

\newcommand{\bbf}{\ensuremath{\textbf{b}}}
\newcommand{\cbf}{\ensuremath{\textbf{c}}}
\newcommand{\dbf}{\ensuremath{\textbf{d}}}

\newcommand{\nct}{\ensuremath{\xi_{r}}} %Natural Contact Structure
 %Induced Contact Structure
\DeclareMathOperator{\kf}{\ensuremath{Ker}}

\DeclareMathOperator{\idt}{\ensuremath{Id}}
\renewcommand{\Re}{\operatorname{Re}}
\renewcommand{\Im}{\operatorname{Im}}

%\NewDocumentCommand{\mybar}{ O{0.7} O{2pt} m }{%  <---- Set the default values here
%    \mathrlap{\hspace{#2}\overline{\scalebox{#1}[1]{\phantom{\ensuremath{#3}}}}}\ensuremath{#3}
%}
%\newcommand*\cj[1]{\mybar{#1}}

\newcommand{\cj}[2][3]{%
{}\mkern#1mu\overline{\mkern-#1mu#2}}

\makeatletter
\def\blfootnote{\gdef\@thefnmark{}\@footnotetext}
\makeatother

\title[Short title]{Non-degenerate mixed maps and contact structures}
\author{Inácio Rabelo}
\thanks{The first named author was supported by the Coordenação de Aperfeiçoamento de Pessoal de Nível Superior - Brasil (CAPES) - Finance Code 001. The second named author had support from Mexico’s PAPIIT-UNAM proyect IN101424.}
\address{Instituto de Ciências Matematicas e Computação, Av. Trabalhador São-Carlense 400, Centro. Caixa Postal: 668 CEP 13560-970, São Carlos SP, Brasil}
\email{rabeloinacio@alumni.usp.br}
\author{José Seade}
\address{Instituto de Matematicas, Unidad Cuernavaca, Universidad Nacional Autónoma de México, Avenida Universidad s/n, Colonia Lomas de Chamilpa, CP62210, Cuernavaca, Morelos Mexico}
\email{jseade@im.unam.mx}

\numberwithin{equation}{section}

\begin{document}

%\blfootnote{\textup{2020} \textit{Mathematics Subject Classification}: 53D35, 32S55, 32C05.}

\maketitle

\begin{center}
\dedicatory{\it \small Dedicated to the memory of Professor Maria Ruas (Cidinha)}
\end{center}

    \begin{abstract}
     We study the geometry and topology of real analytic maps $\co^n \to \co^k$, where $n > k$, regarded as mixed maps, defined below. Firstly, we give two natural families of mixed isolated complete intersection singularities, called mixed ICIS, which are interesting on their own. We consider the notion of Newton non-degeneracy for mixed maps; we prove that these define mixed ICIS and relate this property to the Milnor sets and the existence of Milnor fibrations. Then, building on previous constructions due to Oka, we investigate natural contact structures and adapted open books on a particular class of mixed links. Finally, we look at mixed links that are diffeomorphic to holomorphic ones, and we address the problem of comparing different contact structures. 
       
       \smallskip
       \noindent \textbf{Keywords.} Mixed ICIS, Real singularities, Contact structures, Open book decompositions.
    
       \smallskip
       \noindent \textbf{Mathematics Subject Classification.} 32C05, 32S55, 53D35.
    \end{abstract}

%\tableofcontents 

\section*{Introduction}

Mixed maps are real analytic maps in complex variables and their conjugates. Perhaps the first time that such maps appeared in singularity theory was in the 1973 paper \cite{ACampo1973} by N. A’Campo where he constructed non-trivial examples of real analytic maps into $\mathbb R^2$ with a local Milnor fibration; this answered a question raised by J. Milnor in his classical book \cite{Milnor1968}. Some 25 years later, the articles \cite{Ruas2002} and \cite{Seade1997}, and later Seade’s book \cite{Seade2006}, opened a line of research on Milnor fibrations for real singularities, based on the use of mixed functions (though the name mixed was not used), and highlighting the use of complex geometry in this setting. It was M. Oka in \cite {Oka2008} who actually coined the name “mixed singularities” and, in a series of remarkable papers, showed that these inherit several properties from the holomorphic context and are objects of great interest from the topological point of view. Several authors have contributed to making this a rich and interesting theory, see for instance  \cite {CisnerosMolina2008}, \cite{Grulha2021}, \cite {Lopez1997}, \cite{Oka2008}, \cite{Oka2010}, \cite{Oka2011}, \cite{Ribeiro2021}, to cite a few. For a general account, we refer the reader to \cite{Oka2021}.

In this work, we study mixed isolated complete intersection singularities, mixed ICIS for short. In the first part, we give two constructions of such maps. The first of these we call Siegel maps, as these spring by considering the space of Siegel leaves of a generic linear action of $\mathbb C^k$ in  $\mathbb C^n$  in the Siegel domain. This is interesting on its own. The regular part of the mixed ICIS has a canonical complex structure induced by the linear action, and it admits a $\mathbb C^*$-action with compact quotient. This gives rise to an important class of complex manifolds known as LVM-manifolds, a type of moment-angle manifolds of importance in algebraic topology and mathematical physics.

We next look at the notion of (strong) non-degeneracy for mixed maps. This extends the classical notion introduced by Khovanskii in \cite{Khovanskii1977} for complex-valued holomorphic maps. We determine some general properties and relate non-degeneracy with the mixed ICIS property, Milnor sets, and the existence of Milnor fibrations. This provides a second class of examples given by a construction introduced by Oka in the case of mixed hypersurfaces, called \textit{mixed coverings}. As a byproduct, we give the mixed version of Hamm's complete intersections \cite{Hamm1972}.
 
Contact structures appear in singularity theory after Varchenko's work \cite{Varchenko1980}, which showed that links of complex analytic varieties with isolated singularities are contact submanifolds of the sphere with its canonical structure. Using the spherical Milnor fibration, Giroux further established in \cite{Giroux2002} their close relation with open book decompositions. Since then, Oka and others have studied contact structures associated with real and complex singularities. 

We extend Oka's constructions in \cite{Oka2014} and give conditions under which mixed links associated with certain non-degenerate mixed maps admit canonical contact structures. We derive an open book decomposition based on \cite{Caubel2006} and \cite{Oka2014}, using a modification of the defining contact form and a formulation in \cite{Caubel2006} related to non-isolated singularities. Finally, we examine cases in which mixed ICIS links are diffeomorphic with holomorphic ones and compare their natural contact structures.

\section{Mixed ICIS}\label{s1}

    \subsection{Definitions}

    A \textit{mixed map} is a complex vector-valued map $F : \co^{n} \longrightarrow \co^{k}$ which is real analytic in the variables $z = (z_{1}, \dots, z_{n})$ and  $\cj{z} = (\cj{z}_{1}, \dots, \cj{z}_{n}) \in \co^{n}$. If $k=1$, we denote it by $f$ and call it a \textit{mixed function}. In particular, a mixed function has a series expansion of the form $\sum_{\mu,\nu}\lambda_{\mu,\nu}z^{\mu}\cj{z}^{\nu}$. Mixed maps are those for which the coordinate functions are mixed. In this case, we have the following associated differentials:
    \begin{align*}
        \partial f = \sum_{j=1}^{n}\frac{\partial f}{\partial z_{j}}dz_{j} \quad, \;\;\;
        \cj{\partial} f = \sum_{j=1}^{n}\frac{\partial f}{\partial \cj{z}_{j}}d\cj{z}_{j} \quad , \;\;\; 
        df = \partial f + \cj{\partial}f.
    \end{align*}
    We define the following complex gradients:
    \begin{align}\label{mixgrad}
        Df(z,\cj{z}) = \left(\frac{\partial f}{\partial z_{1}}, \dots, \frac{\partial f}{\partial z_{n}}\right), \; \cj{D}f(z,\cj{z}) = \left(\frac{\partial f}{\partial \cj{z}_{1}}, \dots, \frac{\partial f}{\partial \cj{z}_{n}}\right) \in \co^{n}.
    \end{align}

    A critical point of a mixed map germ is a point for which the rank of the Jacobian matrix is not maximal and we call it a \textit{mixed singularity}. The set of critical points of a map $F$ is denoted by $\Sigma_{F}$ and its zero set $F^{-1}(0)$ by $V_{F}$. We have the following characterization of mixed singularities stated in \cite[Proposition 4]{Ribeiro2021}.

    \begin{Prop}\label{chmm}
        Let $F = (f^{1}, \dots, f^{k}) : \co^{n} \longrightarrow \co^{k}$ be a mixed map germ. Then $a \in \co^{n}$ is a mixed singularity if and only if there exists $\alpha_{1}, \dots, \alpha_{k} \in \co$ non-simultaneously vanishing such that 
        \begin{align}\label{eqchmm}
            \alpha_{1}\cj{D}f_{1}(a) + \dots + \alpha_{k}\cj{D}f_{k}(a) = -\cj{\alpha}_{1}\cj{Df_{1}}(a) - \dots - \cj{\alpha}_{k}\cj{Df_{k}}(a).
        \end{align}
    \end{Prop}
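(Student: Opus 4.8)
The plan is to regard $F$ as the underlying real analytic map $\re^{2n}\to\re^{2k}$, to detect the failure of maximal rank as a real linear dependence among the differentials of the real coordinate functions, and then to rewrite that dependence in the holomorphic/antiholomorphic frame so that \eqref{eqchmm} appears directly.

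First I would write $f_{i}=u_{i}+\imu v_{i}$ with $u_{i},v_{i}$ real valued and $z_{j}=x_{j}+\imu y_{j}$. Since $n>k$, the real Jacobian of $F$ at $a$ is a $2k\times 2n$ matrix whose maximal possible rank is $2k$; hence $a\in\Sigma_{F}$ precisely when this matrix fails to be surjective, that is, when its rows $du_{1},dv_{1},\dots,du_{k},dv_{k}$ are linearly dependent over $\re$ at $a$. Equivalently, $a$ is a mixed singularity if and only if there exist real numbers $c_{1},d_{1},\dots,c_{k},d_{k}$, not all zero, with
\begin{align*}
    \sum_{i=1}^{k}\bigl(c_{i}\,du_{i}+d_{i}\,dv_{i}\bigr)\big|_{a}=0.
\end{align*}

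Next I would pass to complex notation. Putting $\beta_{i}=\tfrac12(c_{i}-\imu d_{i})$ and substituting $du_{i}=\tfrac12(df_{i}+d\cj{f}_{i})$ and $dv_{i}=\tfrac{1}{2\imu}(df_{i}-d\cj{f}_{i})$ yields $c_{i}\,du_{i}+d_{i}\,dv_{i}=\beta_{i}\,df_{i}+\cj{\beta}_{i}\,d\cj{f}_{i}$, where $(\beta_{i})$ is nonzero exactly when $(c_{i},d_{i})$ is. Decomposing $df_{i}=\partial f_{i}+\cj{\partial}f_{i}$ and likewise $d\cj{f}_{i}$, I would split the resulting $1$-form into its $(1,0)$- and $(0,1)$-parts. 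The crucial observation is that this $1$-form is real, so its $(0,1)$-part is the conjugate of its $(1,0)$-part; one vanishes if and only if the other does, and it is enough to require the $(0,1)$-part to vanish. Reading off the coefficients of the $d\cj{z}_{j}$, the vector attached to $\cj{\partial}f_{i}$ is $\cj{D}f_{i}$, while $\cj{\partial}\cj{f}_{i}=\overline{\partial f_{i}}$ contributes $\cj{Df_{i}}$, so the $(0,1)$-condition reads
\begin{align*}
    \sum_{i=1}^{k}\beta_{i}\,\cj{D}f_{i}(a)+\sum_{i=1}^{k}\cj{\beta}_{i}\,\cj{Df_{i}}(a)=0.
\end{align*}

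Finally I would set $\alpha_{i}=\imu\,\beta_{i}$; multiplying the last identity by $\imu$ and using $\beta_{i}=-\imu\alpha_{i}$ turns it into $\sum_{i}\alpha_{i}\,\cj{D}f_{i}(a)=\sum_{i}\cj{\alpha}_{i}\,\cj{Df_{i}}(a)$, which is exactly \eqref{eqchmm}, and the $\alpha_{i}$ are non-simultaneously vanishing because the $\beta_{i}$ are. Reversing each step gives the converse, so the two conditions are equivalent. I expect the only genuine difficulty to be the bookkeeping of the four gradient objects $Df_{i},\cj{D}f_{i},\cj{Df_{i}},\overline{\cj{D}f_{i}}$ together with the conjugation identities relating $\partial$ and $\cj{\partial}$ applied to $f_{i}$ and to $\cj{f}_{i}$; once the reality of the dependence $1$-form is exploited to merge the $(1,0)$- and $(0,1)$-conditions, the statement follows by comparing the coefficients of $d\cj{z}_{j}$.
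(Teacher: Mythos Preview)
Your argument is correct. The paper does not supply its own proof of this proposition: it merely quotes the statement from \cite[Proposition~4]{Ribeiro2021}, so there is no in-paper argument to compare against. Your route---passing from real linear dependence of $du_{i},dv_{i}$ to the complex combination $\beta_{i}\,df_{i}+\cj{\beta}_{i}\,d\cj{f}_{i}$, exploiting that this $1$-form is real so that its $(1,0)$- and $(0,1)$-parts vanish simultaneously, and then renaming $\alpha_{i}=\imu\beta_{i}$ to absorb the sign---is the standard one and matches what the cited reference does; the bookkeeping you flagged as the only delicate point is handled correctly.
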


    This generalizes \cite[Proposition 1]{Oka2008}, which states the following.

    \begin{Cor}\label{okacri}
        Let $f : (\co^{n},0) \longrightarrow (\co,0)$ be a mixed function germ. Then $a \in \co^{n}$ is a mixed singularity if and only if there exists a complex number $\alpha$ such that $\nm \alpha \nm = 1$ and $\cj{D f}(a,\cj{a}) = \alpha \cj{D}f(a,\cj{a})$.
    \end{Cor}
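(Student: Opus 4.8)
The plan is to deduce the statement as the $k=1$ case of Proposition~\ref{chmm}, reducing it to an elementary manipulation with complex conjugates. First I would substitute $k=1$ into \eqref{eqchmm}: the clause ``$\alpha_{1},\dots,\alpha_{k}$ non-simultaneously vanishing'' collapses to the existence of a single $\alpha\in\co\setminus\{0\}$ with
\begin{align*}
\alpha\,\cj{D}f(a) = \cj{\alpha}\,\cj{Df}(a).
\end{align*}
By Proposition~\ref{chmm}, $a$ is a mixed singularity precisely when this equation is solvable with $\alpha\neq 0$, so it suffices to show that this solvability is equivalent to the existence of a unit-modulus $\alpha$ satisfying $\cj{Df}(a)=\alpha\,\cj{D}f(a)$.

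For the forward implication, suppose $\alpha\neq 0$ solves the displayed identity. Since $\cj{\alpha}\neq 0$, I divide both sides by $\cj{\alpha}$ and set $\beta:=\alpha/\cj{\alpha}$; then $|\beta| = |\alpha|/|\cj{\alpha}| = 1$ and $\cj{Df}(a) = \beta\,\cj{D}f(a)$, which is exactly the asserted form. For the converse, assume $\cj{Df}(a)=\beta\,\cj{D}f(a)$ with $|\beta|=1$. Writing $\beta=e^{\imu\theta}$ and choosing the square root $\alpha:=e^{\imu\theta/2}\neq 0$, one has $\alpha/\cj{\alpha}=\beta$, so multiplying the hypothesis by $\cj{\alpha}$ yields $\alpha\,\cj{D}f(a)=\cj{\alpha}\,\cj{Df}(a)$, i.e.\ \eqref{eqchmm} for $k=1$; Proposition~\ref{chmm} then certifies that $a$ is a mixed singularity.

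The only point demanding attention is the change of parameter between the two statements: the map $\alpha\mapsto\alpha/\cj{\alpha}$ carries $\co\setminus\{0\}$ onto the unit circle, and it is precisely its surjectivity, realized by the square-root choice above, that powers the converse direction. I expect no genuine obstacle here, since the corollary is a direct algebraic reformulation of the $k=1$ instance of the proposition; in particular the degenerate cases where one or both of $\cj{D}f(a)$, $\cj{Df}(a)$ vanish are absorbed automatically by the same computation and require no separate treatment.
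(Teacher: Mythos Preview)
Your proof is correct and is exactly the intended derivation: the paper states the corollary immediately after Proposition~\ref{chmm} as its $k=1$ specialization, citing \cite[Proposition 1]{Oka2008}, and gives no further argument. Your normalization $\beta=\alpha/\cj{\alpha}$ and the square-root inversion are the standard way to pass between the two formulations, so nothing more is needed.
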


    \begin{Def}\label{dficis}
       Let $F : (\co^{n},0) \longrightarrow (\co^{k},0)$ be a mixed map such that $n > k$ and $V_{F}$ has positive dimension. We say that $F$ is a mixed isolated complete intersection singularity, mixed ICIS for short, if $\Sigma_{F} \cap V_{F} = \{0\}$.
    \end{Def}

    \begin{Rem}
        \normalfont
        Under the condition above, the map $F$ is regular at every point $p \in V_{F} \setminus \{0\}$. Thus, $V_{F}$ has the correct dimension $n-k$, or equivalently, it is a geometric complete intersection. In addition, observe that if $F$ is a holomorphic map, it coincides with the geometric characterization of isolated complete intersection singularities.
    \end{Rem}

    Next, we describe two constructions of mixed ICIS.

        \subsection{Linear actions on $\co^{n}$}\label{s1.2}
       
        Our first construction of mixed ICIS springs from complex geometry and dynamics. Recall that a linear vector field $F$,
        \begin{equation*}
        \mathbb{C}^{n} \ni\left(z_{1}, \cdots, z_{n}\right) \stackrel{F}{\longmapsto} \sum_{i=1}^{n} \lambda_{i} z_{i} \frac{\partial}{\partial z_{i}}
        \end{equation*}        
        in $\mathbb{C}^{n}$ is in the Siegel domain if the eigenvalues $\lambda_{i}$ are complex numbers such that their convex hull $\mathcal{H}\left(\Lambda_{1}, \cdots, \Lambda_{n}\right)$ contains the origin $0 \in \co^{n}$. It defines a holomorphic linear action of $\co$ in $\co^{n}$ with $0$ as the unique fixed point. The orbits define a 1-dimensional holomorphic foliation $\mathcal{F}$, singular at $0$. Let $\mathcal{T}_{F}$ be the set of points where the leaves of $\mathcal{F}$ are tangent to the foliation given by all $(2n-1)$-spheres in $\co^{n}\setminus\{0\}$ centered at $0$. It is clear that a point $z = \left(z_{1}, \ldots, z_{n}\right) \in \mathbb{C}^{n}-\{0\}$ is in $\mathcal{T}_{F}$ if and only if the Hermitian product $\langle F(z), z\rangle$ vanishes. That is:       
        \begin{equation*}
        \sum_{i=1}^{n} \lambda_{i} z_{i} \cj{z}_{i}=0,
        \end{equation*}        
        or equivalently where the real and the imaginary parts vanish:        
        \begin{equation*}
        \sum_{i=1}^{n} \Re\left(\lambda_{i}\right)\left|z_{i}\right|^{2}=0 \quad \text { and } \quad \sum_{i=1}^{n} \Im\left(\lambda_{i}\right)\left|z_{i}\right|^{2}=0.
        \end{equation*}        
       Then $\psi(z)=\langle F(z), z\rangle$ is a mixed function and if  we assume further the following genericity condition:        
        \begin{equation*}
        i \neq j \Rightarrow \lambda_{i} \notin \mathbb{R} \lambda_{j}, \text { for all } i, j=1, \ldots, n,
        \end{equation*}
        which can only happen for $n\ge 3$, then we know from \cite{Camacho1978} that there is an open dense set in $\co^{n}\setminus\{0\}$ of Siegel leaves,  each such leaf being a copy of $\co$ embedded in $\co^{n}$ with a unique point if $\mathcal{T}_{F}$, which is the point in its leaf of minimal distance to 0. Moreover, $\mathcal{T}_{F} \setminus \{0\}$ is a $(2n-2)$-dimensional smooth manifold that parameterizes the space of Siegel leaves.  As noticed in \cite{Lopez1997}, $\mathcal{T}_{F} \setminus \{0\}$ transversal everywhere to the leaves of $\mathcal{T}_{F}$ and therefore it inherits a canonical holomorphic structure from that of $\mathcal{F}$.

        More generally, consider a linear action $\mathcal{A}$ of $\co^{k}$ on $\co^{n}$, where $0 < 2k < n$, generated by $k$ holomorphic linear commuting vector fields.
        
        \begin{equation*}
        \left(z_{1}, \cdots, z_{n}\right) \stackrel{F^{j}}{\longmapsto} \sum_{i=1}^{n} \lambda_{ij} z_{i} \frac{\partial}{\partial z_{i}} \quad, \quad j=1, \ldots, k \,.
        \end{equation*}
        
        Let us assume the genericity condition that the matrix $M=\left(\lambda_{ij}\right)$, with $i \in\{1, \cdots, n\}$ and $j \in\{1, \cdots, k\}$, has rank $k$. Let $\mathcal{F}$ be the complex foliation on $\co^{n}$ whose leaves are the orbits of this action, and let $\Lambda :=\left(\Lambda_{1}, \cdots, \Lambda_{n}\right)$ be the $n$-tuple of vectors in $\co^{k}$ defined by $\Lambda_{i}=\left(\lambda_{i1}, \cdots, \lambda_{ik}\right)$ for $i=1, \cdots, n$. Following \cite{Meersseman2000}, we define:

        \begin{Def}\hfill 
            \begin{enumerate}
            \item The action is in the Siegel domain if the convex hull of $\left(\Lambda_{1}, \cdots, \Lambda_{n}\right)$ in $\mathbb{C}^{k}$ contains the origin:
            \begin{equation*}
            0 \in \mathcal{H}\left(\Lambda_{1}, \cdots, \Lambda_{n}\right).
            \end{equation*}
            \item It is admissible if it is in the Siegel domain and satisfies the following weak hyperbolicity condition: For every $2k$-tuple of integers $i_{1}, \ldots, i_{2k}$ such that $1 \leq i_{1}<\ldots<i_{2k} \leq n$, we have that $0 \notin \mathcal{H}\left(\Lambda_{i_{1}}, \ldots, \Lambda_{i_{2k}}\right)$. In this case we say that the $k$-frame $\mathfrak{F}:=\left(F^{1}, \ldots, F^{k}\right)$ of commuting linear vector fields is admissible.
        \end{enumerate}
        \end{Def}
        
        The last condition means that the convex polytope $\mathcal{H}\left(\Lambda_{1}, \ldots, \Lambda_{n}\right)$ contains $0$ but no hyperplane passing through $2k$ vertices contains $0$. If the frame $\mathfrak{F}:=\left(F^{1}, \ldots, F^{k}\right)$ is admissible, then we know from \cite{Meersseman2000} that there is a dense open set of Siegel leaves, all in $\co^{*n}$. These are copies of $\mathbb{C}^{k}$ embedded in $\mathbb{C}^{n}$ with a unique point of minimal distance to the origin. The space of all these leaves is parameterized by the points where the foliation $\mathcal{F}$ is tangent to the foliation by spheres centered at 0. This is the variety $\mathcal{T}_{\mathfrak{F}}^{*}=\mathcal{T}_{\mathfrak{F}} \backslash\{0\}$ in $\mathbb{C}^{n}$, where $\mathcal{T}_{\mathfrak{F}}$ is defined by the $k$ complex valued equations,
        
        \begin{equation*}
        \left\langle F^{j}(z), z\right\rangle:=\sum_{i=1}^{n} \lambda_{i}^{j}\left|z_{i}\right|^{2}=0 \quad, \quad \forall j=1, \ldots, k \,.
        \end{equation*}
        
        Notice that each of these is a mixed function \(\psi_{j}\) and we know from \cite{Meersseman2000} that \(\mathcal{T}^{*}\) is smooth of real codimension \(2k\). Hence the variety \(V:=V_{\mathfrak{F}}\) defined by \(\left(\psi_{1}, \ldots, \psi_{j}\right)\) is a mixed ICIS. Moreover, $\mathcal{T}_{\mathfrak{F}}^{*}$ is everywhere transversal to the leaves of $\mathcal{F}$ so, by \cite{Haefliger1985}, $\mathcal{T}_{\mathfrak{F}}^{*}$ has a holomorphic structure inherited from that in $\mathcal{F}$. This does not mean that $\mathcal{T}_{\mathfrak{F}}^{*}$ is a complex submanifold of $\mathbb{C}^{n}$, neither that $\mathcal{T}_{\mathfrak{F}}$ is a complex singularity.
        
        We summarize this discussion in the following theorem. This extends to complete intersections the method from \cite{Seade1997} and \cite{Ruas2002} to construct real analytic singularities via complex geometry, and it is a reformulation of the results in \cite{Camacho1978} and \cite{Lopez1997} for $k=1$ and from \cite{Meersseman2000} for $k>1$.

        \begin{The}\label{sigicis}
            Let $\mathfrak{F}:=\left(F^{1}, \ldots, F^{k}\right)$ be an admissible frame of $k$ commuting linear vector fields in the Siegel domain. Define $k$ mixed functions $\mathbb{C}^{n} \rightarrow \mathbb{C}$ by:        
            \begin{equation*}
                \psi^{j}(z)=\left\langle F^{j}(z),z\right\rangle:=\sum_{i=1}^{n} \lambda_{ij}\left|z_{i}\right|^{2}.
            \end{equation*}       
            Then:
            \begin{enumerate}
                \item The map $\Psi_{\mathfrak{F}}=\left(\psi^{1}, \ldots, \psi^{k}\right)$ is a mixed map and $\mathcal{T}_{\mathfrak{F}}=\Psi_{\mathfrak{F}}^{-1}(0, \ldots, 0)$ is a mixed ICIS.
                \item The variety $\mathcal{T}_{\mathfrak{F}}^{*}:=\mathcal{T}_{\mathfrak{F}} \backslash\{0\}$ is a smooth complex $(n-k)$-manifold that parameterizes the space of Siegel leaves of the linear action defined by $\mathfrak{F}$.
            \end{enumerate}
        \end{The}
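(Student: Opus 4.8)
The first claim, that $\Psi_{\mathfrak F}$ is a mixed map, is immediate: each coordinate $\psi^j(z)=\sum_{i=1}^n\lambda_{ij}z_i\cj z_i$ is a polynomial in $z$ and $\cj z$. Since $n>2k$, the zero set $V_\Psi=\mathcal T_{\mathfrak F}$ will have real dimension $2(n-k)>0$, and it is nonempty because admissibility forces the existence of Siegel leaves by \cite{Meersseman2000}; so the substantive part of (1) is the isolatedness condition $\Sigma_\Psi\cap V_\Psi=\{0\}$. The plan is to make the criterion of Proposition \ref{chmm} explicit for the functions $\psi^j$ and then to translate the resulting algebraic condition into the convex-geometric language of admissibility.

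First I would compute the two complex gradients. From $\partial\psi^j/\partial z_l=\lambda_{lj}\cj z_l$ and $\partial\psi^j/\partial\cj z_l=\lambda_{lj}z_l$ one reads off $\cj D\psi^j=(\lambda_{1j}z_1,\dots,\lambda_{nj}z_n)$ and $\overline{D\psi^j}=(\overline{\lambda_{1j}}\,z_1,\dots,\overline{\lambda_{nj}}\,z_n)$. Substituting these into \eqref{eqchmm}, the $l$-th coordinate of the singularity equation at a point $a$ reduces to $a_l\big(\Lambda_l\cdot\alpha-\overline{\Lambda_l\cdot\alpha}\big)=0$, where $\Lambda_l\cdot\alpha:=\sum_{j=1}^k\lambda_{lj}\alpha_j$. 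Hence $a$ is a mixed singularity if and only if there is a nonzero $\alpha\in\co^k$ with $\Im(\Lambda_l\cdot\alpha)=0$ for every index $l$ with $a_l\neq0$. The observation that unlocks the argument is that, as $\alpha$ runs over $\co^k\setminus\{0\}$, the sets $\{\Lambda\in\co^k:\Im(\Lambda\cdot\alpha)=0\}$ run over all real hyperplanes of $\co^k\cong\re^{2k}$ through the origin. Thus $a\in\Sigma_\Psi$ exactly when the vectors $\{\Lambda_l: a_l\neq0\}$ all lie in a single real hyperplane of $\re^{2k}$ through $0$.

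With this reformulation, isolatedness follows from admissibility. Suppose $a\in\Sigma_\Psi\cap V_\Psi$ with $a\neq0$, and set $S=\{l:a_l\neq0\}$. From $\psi^j(a)=0$ for all $j$ we get $\sum_{i\in S}|a_i|^2\Lambda_i=0$ with strictly positive weights, so $0\in\mathcal H(\{\Lambda_i:i\in S\})$; and by the previous paragraph the whole family $\{\Lambda_i:i\in S\}$ lies in a real hyperplane $H\cong\re^{2k-1}$ through the origin. Applying Carath\'eodory's theorem inside $H$, the origin is a convex combination of at most $2k$ of the $\Lambda_i$; padding this subfamily with arbitrary indices up to exactly $2k$ of them (possible since $n>2k$) only enlarges the convex hull and therefore contradicts the weak hyperbolicity condition $0\notin\mathcal H(\Lambda_{i_1},\dots,\Lambda_{i_{2k}})$. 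Hence no such $a$ exists, so $\Sigma_\Psi\cap V_\Psi\subseteq\{0\}$, and as $0$ is plainly a critical zero we get equality; this proves (1). Moreover, the same computation shows that $\Psi$ is a submersion at every point of $\mathcal T_{\mathfrak F}^*$, so by the implicit function theorem $\mathcal T_{\mathfrak F}^*$ is already a smooth real manifold of dimension $2(n-k)$.

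For (2) it remains to identify this manifold with the space of Siegel leaves and to equip it with a complex structure. That each Siegel leaf is a copy of $\co^k$ meeting $\mathcal T_{\mathfrak F}^*$ in its unique point of minimal distance to $0$, so that $\mathcal T_{\mathfrak F}^*$ is a global transversal parameterizing the leaf space, is precisely the content of \cite{Meersseman2000} (and of \cite{Camacho1978,Lopez1997} when $k=1$), which I would quote directly. Since the foliation $\mathcal F$ is holomorphic with $k$-dimensional leaves and $\mathcal T_{\mathfrak F}^*$ is transverse to it, the holonomy identifications are biholomorphisms, and \cite{Haefliger1985} then endows $\mathcal T_{\mathfrak F}^*$ with a complex structure of dimension $n-k$. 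I expect the only real obstacle to lie in the first part, namely in pinning down the equivalence between the mixed-singularity criterion and admissibility; once the hyperplane reformulation and the Carath\'eodory padding are in place, the rest is either a direct computation or an appeal to the cited dynamical results.
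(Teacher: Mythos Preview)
Your proof is correct, and for part (1) it is considerably more direct than what the paper does. The paper does not argue the mixed ICIS property at all; it simply records that the discussion preceding the theorem, together with \cite{Meersseman2000}, gives that $\mathcal T_{\mathfrak F}^*$ is smooth of real codimension $2k$, and reads off the ICIS condition from that. You instead unwind Proposition~\ref{chmm} for the specific functions $\psi^j$, obtain the clean reformulation ``$a\in\Sigma_\Psi$ iff $\{\Lambda_l:a_l\neq0\}$ lies in a real hyperplane of $\re^{2k}$ through $0$'', and combine it with the convex relation $\sum_{i\in S}|a_i|^2\Lambda_i=0$ coming from $a\in V_\Psi$ to contradict weak hyperbolicity via Carath\'eodory. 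This buys a self-contained argument that stays entirely inside the paper's own toolkit (Proposition~\ref{chmm}) and makes transparent exactly how admissibility enters, whereas the paper's route offloads that verification to \cite{Meersseman2000}. For part (2) your approach coincides with the paper's: both invoke \cite{Meersseman2000} (and \cite{Camacho1978,Lopez1997} for $k=1$) for the transversality and the identification with the leaf space, and \cite{Haefliger1985} for the induced complex structure.
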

        
        Whence, we define:

        \begin{Def}\label{sigicisdef}
            Let $\mathfrak{F}:=\left(F^{1}, \ldots, F^{k}\right)$ be an admissible frame of $k$ commuting linear vector fields in the Siegel domain, and let $\Psi_{\mathfrak{F}}=\left(\psi^{1}, \ldots, \psi^{k}\right)$ be as above. We call $\Psi_{\mathfrak{F}}$ a Siegel complete intersection map.
        \end{Def}

        \begin{Rem}
            \normalfont
            We know from \cite{Lopez1997, Meersseman2000} that the variety $\mathcal{T}_{\mathfrak{F}} $ admits a canonical  $\co^*$-action, which  is a polar action in the sense of \cite{CisnerosMolina2008} and it  preserves  the complex structure in $\mathcal{T}_{\mathfrak{F}}^{*}$.
            The quotient  $\mathcal{T}_{\mathfrak{F}}^{*}/\co^*$ is a compact complex orbifold with a very interesting geometry and topology. These give rise to the LVM-manifolds, a special type of moment-angle manifolds. We refer to \cite{Verjovsky2008} for a thorough account on the subject.
        \end{Rem}

\begin{Rem}    
        \normalfont
        Notice that one may consider a vector field $ F =\left(\lambda_1 z_{\sigma(1)}^{a_{\sigma(1)}}, \ldots,  \lambda_n z_{\sigma(n)}^{a_{\sigma(n)}}\right)$, where $\sigma$ is a permutation of $\{1, \dots, n\}$, and  the real analytic function $z \mapsto\langle F(z), z\rangle$. The zero set of this function describes the points where $F$ is tangent to the spheres centered at 0. Re-labeling the variables and assuming $\lambda_i =1$ for simplicity, this takes the form:
        \begin{equation}\label{twist}
        \Psi_{F}=z_{1}^{a_{1}} \bar{z}_{\sigma(1)}+\cdots+z_{n}^{a_{n}} \cj{z}_{\sigma(n)} .
        \end{equation}
        These are the twisted Pham-Brieskorn polynomials from \cite{Seade1997} and \cite{Ruas2002}, where it is proved that if the $a_{i}$ are all $\geq 2$, then these have a unique critical point at $0$ and they have a Milnor fibration. This was the birth of the theory of mixed functions. In fact these singularities have a canonical action of $\re \times \mathbb{S}^{1}$, later called a polar action in \cite{CisnerosMolina2008}. When the permutation $\sigma$ is the identity, these were called mixed Pham-Brieskorn polynomials in \cite{Oka2008}. Of course one may consider now several of these equations and ask under which conditions the resulting map is a mixed complete intersection. A particular case corresponds to the mixed Hamm complete intersections, discussed in Subsection \ref{s2.2}.
 \end{Rem}
               
        \subsection{Mixed coverings}\label{s1.3}
        
        We now introduce a method due to Oka, which appears in \cite{Oka2014}, that allows constructing mixed maps out from holomorphic ones. He used this to construct interesting mixed functions and mixed hypersurfaces. This method also works for complete intersections.
        
         Let $\abf = (a_{1}, \dots, a_{n})$ and $\bbf = (b_{1}, \dots, b_{n})$ be vectors of positive integers such that $a_{i} \neq b_{i}$ for all $i = 1, \dots, n$. A \textit{mixed covering} $\phi_{\abf,\bbf}$ is the map germ $\phi_{\abf,\bbf} : (\co^{n},0) \longrightarrow (\co^{n},0)$ defined by
            \begin{align*}
                \phi_{\abf,\bbf}(w, \cj{w}) = \left( w_{1}^{a_{1}}\cj{w}_{1}^{b_{1}}, \dots, w_{n}^{a_{n}}\cj{w}^{b_{n}}\right).
            \end{align*}       
        If there exist positive integers $a \neq b$ such that $a_{i} = a$ and $b_{i} = b$ for all $i$, then $\phi_{\abf,\bbf}$ is called a \textit{homogeneous mixed covering} and denoted by $\phi_{a,b}$. Observe that $\phi : \co^{*n} \longrightarrow \co^{*n}$ is a diffeomorphism on a small neighborhood of the origin. Notice yet the similarity of this construction with the mixed maps \eqref{twist}.

         \subsection{Algebraic ICIS}

         For the results stated in this subsection, we refer the reader to \cite{Ruas1989} and \cite{Wall1981}. Recall that a holomorphic map germ $F : (\co^{n},0) \longrightarrow (\co^{k},0)$ is an ICIS if and only if it is $C^{\infty}-\mathcal{K}$-finitely determined, where $\mathcal{K}$ is the contact group of Mather. On the other hand, if $F$ is real analytic, then it defines a mixed ICIS if and only if it is $C^{l}-\mathcal{K}$-finitely determined for all $l \in [0,\infty)$. As we shall see, there exist mixed ICIS which are not $C^{\infty}-\mathcal{K}$-finitely determined. This leads to the following definition.

         \begin{Def}
             We say that a mixed map germ $F: (\co^{n},0) \longrightarrow (\co^{k},0)$ is an algebraic  ICIS if it is $C^{\infty}-\mathcal{K}$-finitely determined. 
         \end{Def}

         This class encompasses the holomorphic ICIS and real analytic maps $(\re^{2n},0) \longrightarrow (\re^{2k},0)$ that are $C^{\infty}-\mathcal{K}$-finitely determined. Moreover, by the characterization mentioned above, an algebraic ICIS is a mixed ICIS as in Definition \ref{dficis}. We shall see that this is not a general property of the mixed setting. 

         \begin{Exam}\label{algsig}
             \normalfont
              Let $\Psi$ be a Siegel complete intersection map determined by an admissible configuration $\Lambda = (\Lambda_{1}, \dots, \Lambda_{n})$. We claim that it is not an algebraic ICIS. Let us consider $\Psi: (\re^{2n},0) \longrightarrow (\re^{2k},0)$ and its complexification $\Psi_{\co} : (\co^{2n},0) \longrightarrow (\co^{2k},0)$.  Its  coordinate functions are: 
             $$\Re\Psi^{i}_{\co} = \sum_{j=1}^{n}\Re\lambda_{ij}\left(\xi_{1,j}^{2}+\xi_{2,j}^{2}\right) \quad  \hbox{and} \quad  \Im\Psi^{i}_{\co} = \sum_{j=1}^{n}\Im\lambda_{ij}\left(\xi_{1,j}^{2}+\xi_{2,j}^{2}\right) \,,$$ where $\xi_{1,j}, \xi_{2,j}$ are complex variables. Take the intersection $L \subset \co^{2n}$ of the complex lines $L_{i} = \{ \xi_{1,i} = \pm \imu \xi_{2,i}\}$. At a point $p \in L$, $\Psi_{\co}(p) = 0$ and its Jacobian matrix has pairwise linearly dependent columns. In other words, $L \subset \Psi_{\co}^{-1}(0) \cap \Sigma_{\Psi_{\co}}$ and $\Psi$ is not an algebraic ICIS because this property does not hold for its complexification (see \cite[Proposition 1.7]{Wall1981}). 
         \end{Exam}
    
         \begin{Def}\hfill 
             \begin{enumerate} 
                 \item A mixed monomial $f_{\mu,\nu} = \lambda_{\mu,\nu}z^{\mu}\cj{z}^{\nu}$ is called purely mixed with respect to $z_{i}$ if $\mu_{i},\nu_{i} \ge 2$.
                 \item A mixed function $f(z,\cj{z}) = \sum_{\mu,\nu}f_{\mu,\nu}$ is called purely mixed if every mononomial $f_{\mu,\nu}$ is purely mixed with respect to some variable $z_{i}$.
             \end{enumerate}
         \end{Def}
    
         \begin{Prop}\label{algicis}
             Let $F = (f^{1}, \dots, f^{k}) : (\co^{n},0) \longrightarrow (\co^{k},0)$ be a mixed map such that $f^{j}$ is purely mixed for every $j = 1, \dots, k$. Then $F$ is not an algebraic mixed ICIS.
         \end{Prop}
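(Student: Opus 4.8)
The plan is to reduce the assertion to a property of the complexification of $F$, exactly as in Example \ref{algsig}. By the characterization recalled in this subsection together with \cite{Wall1981,Ruas1989}, a real analytic germ is $C^{\infty}$-$\mathcal{K}$-finitely determined (that is, an algebraic ICIS) if and only if its complexification is a holomorphic ICIS, i.e. has $\{0\}$ as the only singular point on its zero fibre. Hence it suffices to produce a positive-dimensional subset of $\Sigma_{F_\co}\cap V_{F_\co}$, where $F_\co:(\co^{2n},0)\to(\co^{2k},0)$ denotes the complexification.

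First I would write $F_\co$ down explicitly. Introducing independent complex variables $\zeta=(\zeta_1,\dots,\zeta_n)$ and $w=(w_1,\dots,w_n)$ that complexify $z$ and $\cj z$, the complexifications of $f^i$ and of $\cj{f^i}$ are the holomorphic germs $\tilde f^i(\zeta,w)=\sum_{\mu,\nu}\lambda^i_{\mu,\nu}\zeta^\mu w^\nu$ and $\tilde g^i(\zeta,w)=\sum_{\mu,\nu}\cj{\lambda^i_{\mu,\nu}}\,\zeta^\nu w^\mu$. Since $(\Re f^i,\Im f^i)$ and $(f^i,\cj{f^i})$ differ by a fixed invertible $\co$-linear change of the target — which alters neither $V_{F_\co}$ nor $\Sigma_{F_\co}$ — we may take $F_\co=(\tilde f^1,\dots,\tilde f^k,\tilde g^1,\dots,\tilde g^k)$.

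The heart of the matter is to analyse $F_\co$ on the linear subspace $\{\zeta=0\}$. Because each monomial of $f^i$ is purely mixed with respect to some $z_{i_0}$, it satisfies $\mu_{i_0},\nu_{i_0}\ge 1$; hence every monomial of $\tilde f^i$ and $\tilde g^i$ carries at least one $\zeta$-factor and at least one $w$-factor. It follows that $\tilde f^i$ and $\tilde g^i$ vanish identically on $\{\zeta=0\}\cong\co^n\subset V_{F_\co}$. Differentiating, every $\partial/\partial w_j$ still retains a $\zeta$-factor, so the whole block of $w$-columns of the Jacobian of $F_\co$ is zero along $\{\zeta=0\}$; and a derivative $\partial/\partial\zeta_j$ survives at $\zeta=0$ only for monomials whose $\zeta$-exponent is $e_j$, for which the inequality $\mu_{i_0}+\nu_{i_0}\ge 3$ forces the $w$-exponent in position $j$ to be at least $2$. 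Thus each $\zeta$-column of the Jacobian is divisible by $w_j^2$.

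These two observations conclude the argument. Put $m=\max(0,\,n-2k+1)$ and $W=\{\zeta=0,\ w_1=\dots=w_m=0\}$, a linear subspace contained in $V_{F_\co}$ of dimension $n-m=\min(n,2k-1)\ge 1$. On $W$ the $n$ $w$-columns vanish, and so do the $\zeta$-columns $1,\dots,m$, being divisible by $w_j^2$; hence the Jacobian has at most $\min(n,2k-1)\le 2k-1$ nonzero columns and its rank stays below $2k$. Therefore $W\subset\Sigma_{F_\co}\cap V_{F_\co}$ is positive-dimensional, $F_\co$ is not a holomorphic ICIS, and $F$ is not an algebraic ICIS. I expect the only delicate point to be the bookkeeping of the complexification — identifying $(\Re f^i,\Im f^i)_\co$ with $(\tilde f^i,\tilde g^i)$ up to an invertible target change and checking this preserves both $V_{F_\co}$ and $\Sigma_{F_\co}$ — since the Jacobian computation itself is forced cleanly by the two defining inequalities of pure mixedness.
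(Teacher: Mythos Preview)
Your proposal is correct. It follows the paper's overall strategy --- pass to the complexification $F_{\co}$ and exhibit a positive-dimensional subset of $\Sigma_{F_\co}\cap V_{F_\co}$ --- but your execution is more careful than the paper's sketch. The paper asserts that every $\partial_{z_i}(2\Re f_{\mu,\nu})$ factors as $\|z_j\|^2 h_{\mu,\nu}$ and hence that the full derivative of $F_\co$ vanishes on the set $L$ where all the complexified $\|z_i\|^2$ vanish. You instead observe only that on $\{\zeta=0\}$ the $w$-columns of the Jacobian vanish while the $j$-th $\zeta$-column is divisible by $w_j^2$, and then cut further by $w_1=\dots=w_m=0$, $m=\max(0,n-2k+1)$, to force the rank below $2k$. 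This extra restriction is genuinely needed once $n\ge 2k$: e.g.\ for $n=2$, $k=1$ and $f=z_1\cj z_1^{\,2}+z_2^{\,2}\cj z_2$, the Jacobian of $F_\co$ has rank $2$ at generic points of $\{\zeta=0\}$, so one cannot stop there. Your rank-counting argument therefore sharpens the paper's.
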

            \begin{proof}
                The proof essentially follows by the same argument as  in Example \ref{algsig}. Let $f$ be a purely mixed coordinate function. A straightforward computation shows that for each monomial $f_{\mu,\nu}$, the partial derivatives have the following form:
                \begin{align*}
                    \frac{\partial}{\partial z_{i}}\left(2\Re f_{\mu,\nu}\right) = \frac{\partial}{\partial z_{i}}\left(f_{\mu,\nu} + \cj{f_{\mu,\nu}}\right) = \nm z_{j} \nm^{2}h_{\mu,\nu},
                \end{align*}
                for some index $j$ and a mixed function $h_{\mu,\nu}$. Similar expressions hold for $\Im f_{\mu,\nu}$ and the derivatives with respect to $\cj{z}_{i}$. Applying it to every monomial, one can see that $f_{\co}$ and its derivative vanish on the set $L$ consisting of the intersection of the zero sets corresponding to the complexification of the quadratic terms $\nm z_{j} \nm^{2}$. Therefore, since each coordinate function $f^{i}$ of $F$ is purely mixed, $L \subset F_{\co}^{-1}(0) \cap \Sigma_{F_{\co}}$ and the result follows.
            \end{proof}
    
         As a consequence, the pullbacks by mixed coverings (see Subsection \ref{s1.3}) generate examples of mixed maps which are not algebraic ICIS. This illustrates the substantial differences between mixed and holomorphic settings (see also Subsection \ref{s2.2}).
        
    \section{Non-degeneracy}\label{s2}

        In this section, we extend the notions of non-degeneracy for mixed maps. A mixed function germ is denoted by $f: (\co^{n},0) \longrightarrow (\co,0)$ and a mixed map germ by $F : (\co^{n},0) \longrightarrow (\co^{k},0)$. From now on we assume that $V_{f}$ and $V_{F}$ have positive dimension in $\co^{n}$.

        In \cite{Oka2010}, M. Oka introduced the notion of the Newton polyhedron of a mixed function germ $f$, denoted by $\Gamma^{+}(f)$, and defined the condition of (strong) non-degeneracy. We denote by $\Gamma(f)$ the union of the compact faces of $\Gamma^{+}(f)$, called \textit{Newton boundary}. If $f$ is holomorphic, these two notions coincide with the classical definitions due to Kushnirenko in \cite{Kouchnirenko1976}. 
        
        Let us fix some notations. Let $I \subset \{1, \dots, n\}$ be a non-empty subset. We define $\co^{I} = \{z \in \co^{n} : z_{i} = 0 \;\forall \; i \notin I\}$. We denote the face function of $f$ with respect to a vector $P$ of positive integers as $f_{P}$ and its restriction to $\co^{I}$ by $f^{I}$. A Newton polyhedron $\Gamma^{+}$ is called \textit{convenient} if it intersects each coordinate axis. P. Mondal in \cite{Mondal2021} introduced the following condition, whose main advantage is that it avoids the convenience hypothesis on the Newton polyhedron in some applications. 

        \begin{Def}
             A holomorphic function $f(z)$ is called partially non-degenerate if for every vector $P$ of positive integers and every non-empty subset $I \subset \{1, \dots, n\}$, the gradient $[ (Df)^{I}]_{P}$, denoted by $\left(Df\right)_{P}^{I}$ when there is no confusion, does not vanish in $\co^{*n}$.
        \end{Def}

        For holomorphic function germs $f : (\co^{n},0) \longrightarrow (\co,0)$ these notions are related as follows. If $f$ is non-degenerate and convenient, then $f$ is partially non-degenerate by \cite[Proposition X1.7]{Mondal2021}. Moreover, if $f$ is partially non-degenerate, then it has an isolated singularity at the origin as proved in \cite[Theorem X1.3]{Mondal2021}. However, there are examples in which $f$ is degenerate but partially non-degenerate, see \cite[Example XI.6]{Mondal2021}. More generally, these implications depend on the fact that $(Df)^{I}_{P}$ and $(Df_{P})^{I}$ do not coincide in general for a subset $I \subset \{1, \dots, n\}$ and $P$ a vector of positive integers.
      
        The next two results seem to be widely known in the literature in the case of the usual notion of non-degeneracy, although we did not find a proper reference. Thus, we include the proofs for convenience and future reference.

        \begin{Lem}\label{holocur}
            Let $f(z)$ be a partially non-degenerate holomorphic function. There does not exist a nonzero real analytic curve $z(t) : (0,1] \longrightarrow \co^{n}\setminus\{0\}$ such that $\lim_{t\to 0}z(t) = 0$ and $f_{z_{j}}(z(t)) \equiv 0$ for all $j = 1, \dots, n$.
        \end{Lem}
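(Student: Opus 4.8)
The plan is to argue by contradiction, using a Newton--Puiseux expansion of the hypothetical curve to produce a point of $(\co^*)^{I}$ at which a restricted face of the gradient vanishes, in violation of partial non-degeneracy. So suppose such a curve $w(t)$ exists.

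Since $w$ is real analytic, nonzero, and $w(t)\to 0$ as $t\to 0$, after reparametrizing $t$ if necessary I may write each coordinate as $w_{i}(t)=c_{i}t^{p_{i}}+(\text{higher order in }t)$ with $p_{i}\in\mathbb{Z}_{>0}$. Set $I=\{\,i : w_{i}\not\equiv 0\,\}$; then $I\neq\emptyset$, one has $c_{i}\neq 0$ for $i\in I$ and $w_{i}\equiv 0$ for $i\notin I$. Put $c=(c_{i})_{i\in I}\in(\co^*)^{I}$, and let $P$ be a vector of positive integers whose $I$-entries are the $p_{i}$; the entries outside $I$ are immaterial after restriction and may be chosen arbitrarily.

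Because the curve takes values in $\co^{I}$, for every index $j$ we have $f_{z_{j}}(w(t))=(f_{z_{j}})^{I}(w(t))$, where $(f_{z_{j}})^{I}$ denotes the restriction to $\co^{I}$; as $f$ is holomorphic, $f_{z_{j}}$ depends only on $z$, so the conjugate variables play no role. Substituting the expansion, a monomial $z^{\mu}$ of $(f_{z_{j}})^{I}$ contributes $c^{\mu}t^{\langle P,\mu\rangle}$, so the term of lowest order in $t$ has $t$-degree $d_{j}=\min_{\mu}\langle P,\mu\rangle$ and coefficient exactly $(f_{z_{j}})^{I}_{P}(c)$, the value at $c$ of the $P$-face of $(f_{z_{j}})^{I}$. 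Since $f_{z_{j}}(w(t))\equiv 0$, this coefficient must vanish, i.e. $(f_{z_{j}})^{I}_{P}(c)=0$ (this is trivially true when $(f_{z_{j}})^{I}\equiv 0$). As this holds for all $j=1,\dots,n$, the whole vector field $(Df)^{I}_{P}$ vanishes at $c$; extending $c$ to a point of $\co^{*n}$, which does not change the value of $(Df)^{I}_{P}$ because it depends only on the $I$-coordinates, contradicts the definition of partial non-degeneracy.

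The step needing the most care is the identification of the leading $t$-coefficient of $(f_{z_{j}})^{I}(w(t))$ with $(f_{z_{j}})^{I}_{P}(c)$: one must check that the higher-order tails of $w_{i}(t)$ contribute only to strictly higher powers of $t$, so that no cancellation lowers the stated order, and that the relevant object is genuinely the restrict-then-take-face gradient $(Df)^{I}_{P}$ from the definition rather than $(Df_{P})^{I}$. Securing integer exponents $p_{i}$ from real analyticity (or by the reparametrization $t\mapsto t^{N}$) is a routine preliminary.
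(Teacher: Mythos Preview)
Your proof is correct and follows essentially the same route as the paper: argue by contradiction, expand the curve as $w_i(t)=c_i t^{p_i}+\text{h.o.t.}$ on the support $I$, substitute into $f_{z_j}$, read off the leading coefficient as $(f_{z_j})^{I}_{P}(c)$, and conclude $(Df)^{I}_{P}(c)=0$, contradicting partial non-degeneracy. If anything, your write-up is slightly more careful than the paper's in flagging the trivially-vanishing case $(f_{z_j})^{I}\equiv 0$ and in distinguishing $(Df)^{I}_{P}$ from $(Df_{P})^{I}$.
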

        \begin{proof}
               First, define $I = \{ i : z_{i}(t) \not\equiv 0\}$. For each $i \in I$, we write the coordinate $z_{i}$ of $z$ as $z_{i}(t) = a_{i}t^{p_{i}} + o(t)$, where $a_{i}, p_{i} \neq 0$ and $o(t)$ denotes higher order terms. One can see that:
               \begin{align*}
                   f_{z_{i}}(z(t)) = f_{z_{i}}^{I}(z(t)) = \left(f_{z_{j}}\right)^{I}_{P}(a)t^{d_{i}} + o(t),
               \end{align*}
               where $P = (p_{1}, \dots, p_{m})$ and $d_{i}$ is the weighted homogeneous degree of the face function $(f_{z_{i}}^{I})_{P}$ of $f_{z_{i}}^{I}$. We conclude that $\left(Df\right)_{P}^{I}(a) = 0$, which contradicts the partial non-degeneracy condition. 
        \end{proof}

        Mixed functions with an isolated singularity can be constructed from holomorphic ones through mixed coverings as follows.

        \begin{Prop}\label{mixiso}
            Let $f(z)$ be a partially non-degenerate holomorphic function and $g = \phi_{\abf,\bbf}^{*}f$ for some mixed covering $\phi_{\abf,\bbf}(w,\cj{w})$. Then $g(w,\cj{w})$ has an isolated mixed singularity at the origin.
        \end{Prop}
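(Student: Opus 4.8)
The plan is to reduce the isolatedness of $\Sigma_{g}$ at the origin to the non-existence of a critical curve, and then kill that curve with Lemma \ref{holocur}. First I would record the differentials of the pullback $g = f\circ\phi_{\abf,\bbf}$. Writing $\zeta = \phi_{\abf,\bbf}(w,\bar w)$, so that $\zeta_{i} = w_{i}^{a_{i}}\bar w_{i}^{b_{i}}$, and using that $f$ is holomorphic together with the fact that $\zeta_{i}$ depends only on $(w_{i},\bar w_{i})$, the chain rule gives
\begin{align*}
\frac{\partial g}{\partial w_{j}} = a_{j}\,w_{j}^{a_{j}-1}\bar w_{j}^{b_{j}}\,\frac{\partial f}{\partial z_{j}}(\zeta), \qquad \frac{\partial g}{\partial \bar w_{j}} = b_{j}\,w_{j}^{a_{j}}\bar w_{j}^{b_{j}-1}\,\frac{\partial f}{\partial z_{j}}(\zeta),
\end{align*}
for $j=1,\dots,n$. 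Thus both complex gradients $Dg$ and $\overline{D}g$ are governed by the single family of holomorphic derivatives $f_{j}(\zeta):=\frac{\partial f}{\partial z_{j}}(\zeta)$.

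Next I would argue by contradiction. If $0$ were not isolated in $\Sigma_{g}$, the Curve Selection Lemma would produce a nonzero real-analytic curve $w(t):(0,1]\to\co^{n}\setminus\{0\}$ with $w(t)\to 0$ as $t\to 0$ and $w(t)\in\Sigma_{g}$ for all $t$. Set $I=\{\,j: w_{j}(t)\not\equiv 0\,\}$, which is non-empty since $w\not\equiv 0$. By Oka's criterion (Corollary \ref{okacri}) there is, for each $t$, a unit complex number $\alpha$ with $\overline{Dg}=\alpha\,\overline{D}g$ coordinatewise. For $j\in I$ I would take moduli in the $j$-th equation: since
\[
\bigl|\bar w_{j}^{\,a_{j}-1} w_{j}^{b_{j}}\bigr| = \bigl|w_{j}^{a_{j}}\bar w_{j}^{\,b_{j}-1}\bigr| = |w_{j}|^{\,a_{j}+b_{j}-1},
\]
the identity $|\overline{Dg}_{j}|=|\overline{D}g_{j}|$ collapses to $a_{j}\,|w_{j}|^{a_{j}+b_{j}-1}|f_{j}(\zeta)| = b_{j}\,|w_{j}|^{a_{j}+b_{j}-1}|f_{j}(\zeta)|$. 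As $w_{j}(t)\neq 0$ for small $t$ and $a_{j}\neq b_{j}$, this forces $f_{j}(\zeta(t))\equiv 0$ for every $j\in I$.

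Finally I would pass to the image curve $\zeta(t)=\phi_{\abf,\bbf}(w(t),\bar w(t))$. For $j\in I$ the coordinate $\zeta_{j}(t)=w_{j}^{a_{j}}\bar w_{j}^{b_{j}}$ has nonzero leading coefficient and positive order, while $\zeta_{j}(t)\equiv 0$ for $j\notin I$; hence $\zeta(t)$ is a nonzero real-analytic curve in $\co^{I}\setminus\{0\}$ tending to $0$, with support exactly $I$. The previous step gives $\frac{\partial f}{\partial z_{j}}(\zeta(t))\equiv 0$ for all $j\in I$, which is precisely the vanishing along $\zeta(t)$ of all partials of the restricted holomorphic function $f^{I}$. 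Reproducing the leading-term computation of Lemma \ref{holocur} for $\zeta(t)$ — expanding $\zeta_{j}(t)=d_{j}t^{q_{j}}+o(t)$ and reading off the lowest-order coefficient of $\frac{\partial f}{\partial z_{j}}(\zeta(t))$ — then yields $(Df)^{I}_{Q}(d)=0$ for the order vector $Q$ (extended arbitrarily to a positive vector on the complementary indices) and the leading-coefficient vector $d\in(\co^{*})^{I}$, contradicting the partial non-degeneracy of $f$.

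I expect the delicate point to be exactly this last reduction. The critical-point equations are \emph{vacuous} on the indices $j\notin I$: there $w_{j}\equiv 0$ together with $a_{j},b_{j}\geq 1$ forces both $\partial g/\partial w_{j}$ and $\partial g/\partial\bar w_{j}$ to vanish identically, so one obtains information only in the $I$-directions. Consequently one cannot feed the full curve $\zeta(t)$ into Lemma \ref{holocur} (which wants all $n$ partials to vanish); instead one must invoke partial non-degeneracy in the restricted form $(Df)^{I}_{Q}$ involving precisely the $I$-directions — equivalently, apply the mechanism of Lemma \ref{holocur} to $f^{I}$ on $\co^{I}$ rather than to $f$ on all of $\co^{n}$. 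Keeping the bookkeeping of $I$ consistent between the source curve $w(t)$ and its image $\zeta(t)$ is what makes the argument go through.
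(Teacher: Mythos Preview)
Your proof is correct and follows essentially the same route as the paper: contradict via the Curve Selection Lemma and Oka's criterion, compute the pullback gradients, take moduli in each $j\in I$ coordinate to force $f_{z_{j}}(\phi(w(t),\bar w(t)))\equiv 0$, and then contradict partial non-degeneracy through Lemma~\ref{holocur}. Your final paragraph is in fact more scrupulous than the paper's one-line invocation of Lemma~\ref{holocur}: the paper only establishes the vanishing of $f_{z_{j}}$ for $j\in I$ and then appeals to the lemma directly, which is justified only because the \emph{proof} of Lemma~\ref{holocur} uses nothing more than the $I$-directions---exactly the point you make explicit.
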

        \begin{proof}
            Otherwise, by the Curve Selection Lemma and Proposition \ref{okacri}, one can find real analytic curves $\lambda(t) \subset \mathbb{S}^{1}$ and $w(t) \subset \Sigma_{g}$ such that $\lim_{t\to 0}w(t) = 0$ and
            \begin{align*}
                \overline{f_{z_{j}}}(\phi(w(t),\cj{w}(t))a_{j}\cj{w}_{j}(t)^{a_{j}-1}w_{j}^{b_{j}} = \lambda(t)b_{j}\cj{w}_{j}^{b_{j}-1}w_{j}(t)^{a_{j}}f_{z_{j}}\left(\phi(w(t),\cj{w}(t))\right),
            \end{align*}
            for every $j =1, \dots, n$. Since $a_{j} \neq b_{j}$, if we take the norms of both sides, the expression above implies that $f_{z_{j}}\left(\phi(w(t),\cj{w}(t))\right) \equiv 0$ for all $j$. This leads to a contradiction with Lemma \ref{holocur}.
        \end{proof}  

    \begin{Rem}
        \normalfont
        In particular, if we take $c_{i} = 1$ and $d_{i} = 0$ for every $i=1, \dots, n$ in the mixed covering $\phi_{\cbf,\dbf}$, we recover that partially non-degenerate holomorphic map germs have an isolated singularity at the origin.
    \end{Rem}
    
    \subsection{Mixed maps}

    We extend the definition of non-degeneracy of holomorphic maps in \cite{Khovanskii1977} to the case of mixed map germs. Similar notions are considered, for example, in \cite{BiviaAusina2007}, \cite{Morales1984}, \cite{Nguyen2022}, and at infinity by \cite{Chen2014} and \cite{Takeuchi2016}. 
    
    \begin{Def}\label{ND}
         Let $F = (f^{1}, \dots, f^{k}) : (\co^{n},0) \longrightarrow (\co^{k},0)$ be a mixed map germ. We say that $F$ is non-degenerate with respect to the Newton boundaries $\Gamma(f^{1}), \dots, \Gamma(f^{k})$ if, for every vector $P$ of positive integers, the following condition is verified: at each point $p \in \co^{*n}$ such that $F_{P}(p) = 0$, the differentials $\cj{Df_{P}^{1}}, \cj{D}f_{P}^{1}, \dots, \cj{Df_{P}^{k}}, \cj{D}f_{P}^{k}$ do not satisfy a relation as in \eqref{eqchmm}. In addition, we say that $F$ is strongly non-degenerate if the previous condition holds for any point $p \in \co^{*n}$.
    \end{Def}

    \begin{Exam}
        \normalfont
        Recall the Siegel complete intersection map $\Psi_{\mathcal{F}}$ in Definition \ref{sigicisdef}. Let $\Lambda = (\Lambda_{1}, \dots, \Lambda_{n})$ be the $n$-tuple of $\co^{k}$-vector defining this mixed ICIS. Let us suppose that for all $m$-tuples $(i_{1}, \dots, i_{m})$ of integers in $\{1, \dots, n\}$, where $m > 2k$, the set $(\Lambda_{i_{1}}, \dots, \Lambda_{i_{m}})$ is an admissible configuration. Then the mixed map $\Psi_{\mathcal{F}}$ becomes non-degenerate for every face $\Delta$ such that $F_{\Delta}$ is a map on $m > 2k$ variables, since its face functions are the restrictions to subspaces $\co^{I}$.
    \end{Exam}

    Non-degeneracy notions are well known for holomorphic maps and mixed functions and form a generic class. We indicate a simple procedure to construct new maps with the same properties. 
    
    \begin{Exam}\label{exmpmaps}
        \normalfont
        Let $F : (\co^{n}, 0) \longrightarrow (\co^{k},0)$ and $G : (\co^{m},0) \longrightarrow (\co^{l},0)$ be (strongly) non-degenerate mixed maps and consider the map $H = \left( F,G \right) : (\co^{n+m},0) \longrightarrow (\co^{k+l},0)$ formed by $F$ and $G$ on separable variables. Since the derivative of the face map $H_{P}$ has a diagonal form, it has maximal rank if and only if both derivatives of $F_{P}$ and $G_{P}$ have maximal rank, for every vector of positive integers $P$. 
    \end{Exam}

    \begin{Rem}\label{Dimensions}
        \normalfont
        Let us consider $n \le k$ in Definition \ref{ND}. In the case $n < k$, we have that $\co^{n} \subset \Sigma_{F_{P}}$ for every vector $P$ of positive integers, and thus $F_{P}^{-1}(0) \cap \co^{*n} = \emptyset$. If $n = k$, and there exists $p \in F_{P}^{-1}(0) \cap \co^{*n}$, then $F_{P}$ is a diffeomorphism in a small neighborhood of $p$. Since $F_{P}(0) = 0$, we obtain a contradiction. We conclude that $F_{P}^{-1}(0) \cap \co^{*n} = \emptyset$ for every $P$. Since the applications of this definition refer to points in these intersections, we exclude this case. In particular, when we consider the non-degeneracy for the restrictions $F^{I} : (\co^{I},0) \longrightarrow (\co^{k},0)$ of $F$ to subspaces $\co^{I}$, we suppose $|I| > k$.
    \end{Rem}

    We prove some properties of non-degenerate maps. The third item of the following result illustrates how the mixed coverings in Subsection \ref{s1.3} play a special role in the construction of mixed maps with particular properties. Let us fix some notations now. Let $I \subset \{1, \dots, n\}$ such that $|I| > k$ and $J_{I} = \{ j : f^{j,I} \not\equiv 0\}$. We denote by $F^{I}_{J} : (\co^{I}, 0) \longrightarrow (\co^{J},0)$ the mixed map germ whose coordinate functions are those $f^{j}$ for which $f^{j,I} \not\equiv 0$. 

    \begin{Prop}\label{properties}
        Let $F = (f^{1}, \dots, f^{k}) : (\co^{n},0) \longrightarrow (\co^{k},0)$ be a mixed map germ.
        \begin{enumerate}
            \item If $F$ is (strongly) non-degenarate, then $F^{I}$ is also (strongly) non-degenerate, where $I \subset \{1, \dots, n\}$ is such that $|I| > k$ and $f^{i,I}\not\equiv 0$ for every $i=1, \dots, k$. 
            \item If $F$ is strongly non-degenerate, then $F^{I}_{J}$ is also strongly non-degenerate for every $I \subset \{1, \dots, n\}$ such that $|I| > k$ and $F^{I} \not\equiv 0$.
            \item If $F$ is (strongly) non-degenerate and $\phi$ is a mixed covering, then the pullback $G = \phi^{*}F$ is also (strongly) non-degenerate.
        \end{enumerate}
    \end{Prop}
        \begin{proof}
            For the first item, we follow the proof in \cite[Proposition 7]{Oka2010} for mixed functions. Let $P$ be a vector of positive integers and denote $\left(f^{j,I}\right)_{P} = f^{j,I}_{P}$ for every $j = 1, \dots, k$. Let $Q_{j} = (q^{j}_{1}, \dots, q^{j}_{n})$ be a vector such that $q^{j}_{i} = p_{i}$ if $i \in I$ and $q^{j}_{i} = v_{i}^{j}$ if $i \notin I$, where $v^{j}_{i}$ is a positive integer. If $v^{j}_{i}$ is sufficiently large for every $i \notin I$, then
            \begin{align*}
                f^{j}_{Q_{j}}(z,\cj{z}) = \left(f^{j,I}\right)_{P}(z_{I}, \cj{z}_{I}),
            \end{align*}
            where $(z_{I}, \cj{z}_{I}) = (z,\cj{z}) \cap \co^{I}$. We may take $v_{i} = \max_{j}\{v^{j}_{i}\}$ and define $Q = (q_{1}, \dots, q_{n})$ such that $q_{i} = p_{i}$ if $i \in I$ and $q_{i} = v_{i}$ if $i \notin I$. It follows that $F^{I}_{P} = F_{Q}$ and the non-degeneracy of $F$ is translated to the non-degeneracy of $F^{I}$. 

            Consider the second claim. Let $J = \{j_{1}, \dots, j_{m}\}$ be the indices such that $f^{j_{1},I}, \dots, f^{j_{m},I} \not\equiv 0$. By the hypothesis $F^{I} \not\equiv 0$, the set $J$ is non-empty. Define $F^{I}_{J} = (f^{j_{1},I}, \dots, f^{j_{m},I}) : (\co^{I}, 0) \longrightarrow (\co^{J},0)$. We shall prove that $F^{I}_{J}$ is strongly non-degenerate. Let $P$ be a vector of positive integers such that $p \in \co^{*I}$ is a critical point of $\left(F^{I}_{J}\right)_{P}$. This implies that the gradients $\cj{D}\left(f^{j_{r},I}\right)_{P}$, $\cj{D\left(f^{j_{r},I}\right)}_{P}$, where $r=1, \dots, m$, are linearly dependent at $p$. By the previous item, there exists a vector $Q$ such that $f^{j_{r},I}_{P} = f^{j_{r}}_{Q}$ for every $r$. Thus, $q = (p, \tilde{p}) \in \co^{*n}$ becomes a critical point of $F_{Q}$ for all $\tilde{p} \in \co^{*I^{C}}$, which is a contradiction.

            In the third item, for each vector $P$ of positive integers, one can see that $G_{\tilde{P}} = \phi^{*}F_{P}$, where $\tilde{P} \in \mathbb{Q}^{n}$ is a vector whose coordinates $\tilde{p}_{i}$ are normalized by the exponents $a_{i}, b_{i}$ of the mixed covering $\phi = \phi_{\abf,\bbf}$ (see \cite[Proposition 6]{Oka2014}). Since $\phi$ is a diffeomorphism on a sufficiently small neighborhood of $\co^{*n}$, the assertion follows.
        \end{proof}

    Items 1 and 2 of the above proposition are analogous to \cite[Proposition 6]{Oka2014} for mixed functions. One has that the non-degeneracy property for holomorphic maps is a general condition by \cite{Khovanskii1977}. Therefore, this assertion provides several examples of non-degenerate mixed maps.

    In the applications of the Newton non-degeneracy property, one needs to a proper notion of convenience, which is related with the real analytic curves provided by the Curve Selection Lemma. An equivalent description of a convenient mixed function germ $f: (\co^{n},0) \longrightarrow (\co,0)$ is the following. For each non-empty subset $I \subset \{1, \dots, n\}$, the restriction $f^{I} \not\equiv 0$. In the case of mixed maps, we will be able to avoid this condition strictly for each coordinate function. In the sequel, we motivate our conditions.

    In virtue of Proposition \ref{properties}, Remark \ref{Dimensions} implies the following. Let $F = (f^{1}, \dots, f^{k}) : (\co^{n},0) \longrightarrow (\co^{k},0)$ be a non-degenerate mixed map germ such that $f^{j}$ is convenient for every $j=1, \dots, k$. Let $I \subset \{1, \dots, n\}$ be a non-empty subset such that $|I| \le k$ and $P$ a vector of positive integers. Let $Q$ be a second vector of positive integers such that $F^{I}_{P}= F_{Q}$. As we argued in Remark \ref{Dimensions}, one obtains that $V_{F_{Q}} \cap \co^{*I} = \emptyset$. This is of particular interest, since the main result of this section concerns the singular points of mixed varieties. The next proposition motivates our first definition of convenience.

    \begin{Prop}\label{propertiesconv}
        Let $F = (f^{1}, \dots, f^{k}) : (\co^{n},0) \longrightarrow (\co^{k},0)$ be a mixed map germ. Let $I, J\subset \{1, \dots, n\}$ be non-empty subsets. Then the following properties hold.
        \begin{enumerate}
            \item The condition $V_{F} \cap \co^{I} = \{0\} $ for every $I$ such that $|I| = k$ is equivalent to $V_{F} \cap \co^{*J} = \emptyset$ for every $J$ with $|J| \le k$.
            \item If $F$ is non-degenerate and $f^{j}$ is convenient for each $j$, then $V_{F} \cap \co^{I} = \{0\}$ for every $I$ such that $|I| = k$.
        \end{enumerate}
    \end{Prop}
        \begin{proof}
            The first assertion follows from the fact that every $J$ with $|J| \le k$ is a subset of some $I$ with $|I| = k$. For the second item, if the statement is not true for some $I$ with $|I| = k$, we apply the Curve Selection Lemma to obtain a real analytic curve $w : (0,1] \longrightarrow \co^{n}$ such that $\lim_{t \to 0}w(t) = 0$ and $w(t) \subset V_{F} \cap \co^{I}$. Let $J = \{ j : w_{j}(t) \not\equiv 0\} \subset I$ and $w_{j} = a_{j}t^{p_{j}} + o(t)$ the analytic expansion of each coordinate $w_{j}$, where $j \in J$, $a_{j} \neq 0$, and $o(t)$ denotes higher order terms. Since $f^{j,J}(w(t), \cj{w}(t)) \equiv 0$, we obtain that the face functions $(f^{j,J})_{P}(a,\cj{a}) = 0$ for every $j$, where the coordinates of $P$ are $p_{j}$ for $j \in J$ (see the proof of Theorem \ref{icis} for details). Since $f^{j}$ is convenient, by the first item of Proposition \ref{properties}, there exists a vector $Q$ such that $F^{J}_{P} = F_{Q}$. We conclude that $a \in V_{F_{Q}} \cap \co^{*J}$, which contradicts the discussion in the previous paragraph.
        \end{proof}

    \begin{Exam}\label{Altcon}
            \normalfont
            Consider the map germ $F = (f^{1}, f^{2}) : (\co^{4},0) \longrightarrow (\co^{2},0)$ given by:
            \begin{align*}
                F(z_{1}, z_{2}, z_{3}, z_{4}) = ( z_{1}^{2} + z_{2}^{2} + z_{3}^{2} + z_{3}z_{4} + T(z,\cj{z}), -z_{1}^{2} + z_{2}^{2} - 2z_{3}^{2} + z_{4}^{2}),
            \end{align*}
            where $T(z,\cj{z})$ is a mixed function consisting of monomials above the Newton polyhedron of $z_{1}^{2} + z_{2}^{2} + z_{3}^{2} + z_{3}z_{4}$. Notice that $f^{1}$ is not convenient, but $f^{1,I} \not\equiv 0$ for all $I \subset \{1, 2, 3, 4\}$ such that $|I| > 2$. We also observe that the compact faces of $f^{1}$ do not depend on the terms of $T(z,\cj{z})$. Moreover, a direct computation shows that $V_{F} \cap \co^{I} = 0$ if $|I| = 2$, and that $F$ is also non-degenerate. We shall see that these conditions ensure the ICIS property.
        \end{Exam}  
        
    \begin{Def}\label{Conv}
        Let $F = (f^{1}, \dots, f^{k}) : (\co^{n},0) \longrightarrow (\co^{k},0)$ be a mixed map germ. We say that $F$ is convenient if the following conditions hold for every $j=1, \dots, k$ and non-empty $I \subset \{1, \dots, n\}$:
        \begin{enumerate}
            \item If $|I| > k$, then $f^{j,I} \not\equiv 0$.
            \item If $|I| = k$, then $V_{F} \cap \co^{I} = \{0\}$.
        \end{enumerate}
    \end{Def}

    In other words, convenient mixed maps are those for which the zero set does not contain small coordinate subspaces of $\co^{n}$. From the second item of Proposition \ref{propertiesconv}, one has that mixed map germs whose all coordinate functions are convenient, are convenient as well. As the Curve Selection Lemma suggests, these subspaces are related to the loss of the non-degenerate property in the restrictions $F^{I}$ of $F$. In the next subsection, we study in detail a non-degenerate mixed map germ whose all coordinate functions are convenient. A number of examples can be constructed through mixed coverings, as we shall remark in Proposition \ref{ConvPull}. We can state now the main result of this section.    

    \begin{The}\label{icis}
         Let $F = (f^{1}, \dots, f^{k}) : (\co^{n},0) \longrightarrow (\co^{k},0)$ be a non-degenerate and convenient mixed map germ such that $V_{F}$ has positive dimension. Then $F$ is a mixed ICIS.
    \end{The}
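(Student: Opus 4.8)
The plan is to argue by contradiction, reducing the existence of a critical point of $F$ lying on $V_{F}$ arbitrarily close to the origin to a violation of partial non-degeneracy, exactly in the spirit of Lemma \ref{holocur} and Proposition \ref{mixiso}. Since $n > k$ and $V_{F}$ has positive dimension, by Definition \ref{dficis} it suffices to prove that $\Sigma_{F} \cap V_{F} = \{0\}$ as germs at the origin. Suppose this fails. As $\Sigma_{F} \cap V_{F}$ is a closed semi-analytic set containing $0$ but not reduced to it, the Curve Selection Lemma produces a real analytic curve $w : [0,\varepsilon) \to \co^{n}$ with $w(0) = 0$ and $w(t) \in (\Sigma_{F} \cap V_{F}) \setminus \{0\}$ for $t \in (0,\varepsilon)$.

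First I would fix the combinatorial data of the curve. Set $I = \{\, j : w_{j} \not\equiv 0 \,\}$ and expand $w_{j}(t) = a_{j}t^{p_{j}} + o(t^{p_{j}})$ with $a_{j} \neq 0$ for $j \in I$, so that $a := (a_{j})_{j \in I} \in \co^{*I}$; I then complete $P = (p_{1}, \dots, p_{n})$ to a vector of positive integers by assigning large values $p_{j}$ for $j \notin I$, exactly as in the proof of Proposition \ref{properties}(1). This choice is not cosmetic: with $p_{j}$ large for $j \notin I$ every monomial involving a variable outside $I$ acquires large $P$-weight, so the $P$-faces $f^{i}_{P}$ are supported on $\co^{I}$ and the operations ``restrict then take the face'' and ``take the face then restrict'' agree for $f^{i}$ and for each of its partial derivatives --- this is precisely what repairs the discrepancy between $(Df)^{I}_{P}$ and $(Df_{P})^{I}$ flagged earlier. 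Substituting $w(t)$ and collecting the lowest power of $t$ gives $f^{i}(w(t)) = f^{i,I}_{P}(a)\,t^{D_{i}} + \cdots$, where $D_{i}$ is the $P$-degree of $f^{i,I}$; since $w(t) \subset V_{F}$ this forces $f^{i,I}_{P}(a) = 0$ for every $i$, that is $F^{I}_{P}(a) = 0$.

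Next, since $w(t) \subset \Sigma_{F}$, Proposition \ref{chmm} supplies for each $t$ complex numbers $\alpha_{1}(t), \dots, \alpha_{k}(t)$, not all zero, with $\sum_{i} \alpha_{i}(t)\,\overline{D}f^{i}(w(t)) = \sum_{i} \overline{\alpha}_{i}(t)\,\overline{Df^{i}}(w(t))$. After a further application of the Curve Selection Lemma to the semi-analytic set of normalized pairs $(z,\alpha)$ lying over $V_{F} \cap \Sigma_{F}$ and satisfying this relation, I may take the $\alpha_{i}(t)$ real analytic and not all identically zero. Writing $\alpha_{i}(t) = c_{i}t^{e_{i}} + \cdots$ and recalling from the computation in Lemma \ref{holocur} that, for $j \in I$, both $\partial f^{i}/\partial z_{j}$ and $\partial f^{i}/\partial \overline{z}_{j}$ evaluated along $w(t)$ have leading term of order $D_{i} - p_{j}$ with coefficients $(\partial f^{i}/\partial z_{j})^{I}_{P}(a)$ and $(\partial f^{i}/\partial \overline{z}_{j})^{I}_{P}(a)$ respectively, I set $m = \min\{\, e_{i} + D_{i} : \alpha_{i} \not\equiv 0 \,\}$ and $S = \{\, i : e_{i} + D_{i} = m \,\}$. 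Dividing the $j$-th scalar equation by $t^{\,m - p_{j}}$ and letting $t \to 0$ yields, for every $j \in I$,
\[
\sum_{i \in S} c_{i}\left( \frac{\partial f^{i}}{\partial \overline{z}_{j}} \right)^{I}_{P}(a) = \sum_{i \in S} \overline{c_{i}}\,\, \overline{\left( \frac{\partial f^{i}}{\partial z_{j}} \right)^{I}_{P}(a)},
\]
with the $c_{i}$ ($i \in S$) not all zero. Reading the two sides as the gradients of the restricted face functions $f^{i,I}_{P}$ on $\co^{I}$, this is exactly a relation of the form \eqref{eqchmm} among the differentials $(Df^{i})^{I}_{P}(a)$ and $(\overline{D}f^{i})^{I}_{P}(a)$ at the point $a \in \co^{*I}$ satisfying $F^{I}_{P}(a) = 0$, contradicting the partial non-degeneracy of $F$.

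The main obstacle is the degree bookkeeping in the final step. One must verify that, for each fixed $j \in I$, the exponent of $t$ in the $i$-th summand is the same, $e_{i} + D_{i} - p_{j}$, on both sides --- this works because differentiating in $z_{j}$ or in $\overline{z}_{j}$ lowers the $P$-weight by the same $p_{j}$, and because $\overline{\alpha}_{i}(t) = \overline{c_{i}}\,t^{e_{i}} + \cdots$ has the same order as $\alpha_{i}(t)$ since $t$ is real --- so that a single scaling $t^{m - p_{j}}$ isolates a genuine leading relation rather than collapsing to $0 = 0$. Care is also needed to interpret the face gradients $(Df^{i})^{I}_{P}$ as gradients of the restricted faces $f^{i,I}_{P}$ on $\co^{I}$, so that only the components $j \in I$ carry information (those $j$ with $(f^{i,I}_{P})_{z_{j}} \equiv 0$ satisfying the relation trivially), and to normalise $(\alpha_{1}, \dots, \alpha_{k})$ so that $S \neq \emptyset$ and the surviving coefficients $c_{i}$ are genuinely nonzero.
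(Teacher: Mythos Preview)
Your proof is correct and follows essentially the same approach as the paper's: contradiction via the Curve Selection Lemma, expansion of the curve and of the coefficients $\alpha_{i}(t)$, and comparison of lowest $t$-orders to extract a relation of type \eqref{eqchmm} among the face differentials $(Df^{i})^{I}_{P}$, $(\cj{D}f^{i})^{I}_{P}$ at a point $a \in \co^{*I}$ with $F^{I}_{P}(a)=0$. Your write-up is in fact more explicit on several points the paper treats briefly --- the completion of $P$ by large values for $j\notin I$, the verification that $F^{I}_{P}(a)=0$ from $w(t)\subset V_{F}$, the second Curve Selection Lemma to make the $\alpha_{i}$ analytic, and the degree bookkeeping --- but the argument is the same.
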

        \begin{proof}
            We follow a standard argument of the literature (see, for instance, \cite[Section 8.3]{Oka2021}). If the statement is false, then by the Curve Selection Lemma and Proposition \ref{chmm}, there exist real analytic curves $w(t) \subset V_{F} \cap \Sigma_{F}$, $\alpha_{1}(t), \dots, \alpha_{k}(t) \subset \co$ non-simultaneously vanishing such that:
            {\footnotesize{
                \begin{align*}
                    \alpha_{1}(t)\cj{D}f^{1}\left(w(t),\cj{w}(t)\right) + \dots + \alpha_{k}(t)\cj{D}f^{k}\left(w(t),\cj{w}(t)\right) = -\cj{\alpha}_{1}(t)\cj{Df^{1}}\left(w(t),\cj{w}(t)\right) - \dots - \cj{\alpha}_{k}(t)\cj{Df^{k}}\left(w(t),\cj{w}(t)\right).
                \end{align*}}}
            \hskip-4pt Let $I = \{ i : w_{i}(t) \not\equiv 0 \}$ and notice that $|I| > k$ by the convenience. Consider the real analytic expansion of the curves: 
                \begin{align*}
                    w_{i}(t) = a_{i}t^{p_{i}} + o(t),\; a_{i} \in \co^{*}, p_{i} \ge 1,\\
                    \alpha_{j}(t) = c_{j}t^{q_{j}} + o(t),\;c_{j} \in \co^{*}, q_{j} \ge 1,
                \end{align*}
            for all $i \in I$ and $j$ such that $\alpha_{j} \not\equiv 0$, where $o(t)$ denotes higher order degree terms. We obtain that:
            \begin{equation}\label{partexp}
                \begin{split}
                \frac{\partial f^{j,I}}{\partial z_{i}}(w(t), \cj{w}(t)) &= \frac{\partial f_{P}^{j,I}}{\partial z_{i}}t^{d_{j}-p_{i}} + o(t), \\
                \frac{\partial f^{j,I}}{\partial \cj{z}_{i}}(w(t), \cj{w}(t)) &= \frac{\partial f_{P}^{j,I}}{\partial \cj{z}_{i}}t^{d_{j}-p_{i}} + o(t),
                \end{split}
            \end{equation}
            where $d_{j}$ is the weighted homogeneous degree of the face function $f^{j,I}_{P}$ of $f^{j,I}$. Replacing these expansions in the relation above, for each $i$, we have:
            {\footnotesize
                \begin{equation}\label{icisrel}
                    \begin{split}
                    \alpha_{1}(t)\left( \frac{\partial f^{1,I}_{P}}{\partial \cj{z}_{i}}(a,\cj{a})t^{d_{1}-p_{i}} + o(t)\right) +& \dots + \alpha_{k}(t)\left( \frac{\partial f^{k,I}_{P}}{\partial \cj{z}_{i}}(a,\cj{a})t^{d_{k}-p_{i}} + o(t)\right) \\ &= \\
                    -\cj{\alpha}_{1}(t)\left( \cj{\frac{\partial f^{1,I}_{P}}{\partial z_{i}}}(a,\cj{a})t^{d_{1}-p_{i}} - o(t)\right) -& \dots - \cj{\alpha}_{k}(t)\left( \cj{\frac{\partial f^{k,I}_{P}}{\partial z_{i}}}(a,\cj{a})t^{d_{k}-p_{i}} + o(t)\right),
                    \end{split}
                \end{equation}}  
            \hskip-4pt Comparing the lower degree terms of these equations for every $i=1, \dots, n$, the point $a \in (F^{I})_{P}^{-1}(0) \cap \co^{*I}$ becomes a critical point of $(F^{I})_{P}$. Given that $|I| > k$, one has $f^{j,I} \not\equiv 0$ for every $j$, and thus we may apply the first item of Proposition \ref{properties} to conclude that $F^{I}$ is also non-degenerate, which leads to a contradiction.
        \end{proof}

        \begin{Cor}\label{iciscor}
            Let $F = (f^{1}, \dots, f^{k}) : (\co^{n},0) \longrightarrow (\co^{k},0)$ be a non-degenerate and convenient mixed map germ. Then $F^{I} : (\co^{I},0) \longrightarrow (\co^{k},0)$ is a mixed ICIS for every $I \subset \{1, \dots, n\}$ such that $|I| > k$ and $V_{F^{I}}$ has positive dimension.
        \end{Cor}
            \begin{proof}
                As we have seen, the convenience allows us to apply the first item of Proposition \ref{properties} to conclude that $F^{I}$ is non-degenerate. For the convenience itself, let $J \subset I$. First, if $|J| =k$, then:
                \begin{align*}
                    V_{F^{I}} \cap \co^{J} = V_{F} \cap \co^{I} \cap \co^{J} = V_{F} \cap \co^{J} = \{0\}.
                \end{align*}
                If $|J| > k$, then $\left(f^{j,I}\right)^{J} = f^{j,I\cap J} \not\equiv 0$, since $|I \cap J|  = |J| > k$. Therefore, the result follows from Theorem \ref{icis}.
            \end{proof}

        \begin{Def}
            Let $F : (\co^{n},0) \longrightarrow (\co^{k},0)$ be a convenient mixed map germ. We say that $F$ is strongly convenient if, for every vector $P$ of positive integers, the face map $F_{P} : (\co^{I},0) \longrightarrow (\co^{k},0)$, where $I \subset \{1, \dots, n\}$ is a non-empty subset, satisfies the following conditions:
            \begin{enumerate}
                \item If $|I| > k$, then $F_{P}$ is convenient.
                \item If $|I| \le k$, then $V_{F_{P}} = 0$.
            \end{enumerate}
        \end{Def}
        
        If we look at the complete convenience of all face functions for $k=1$, the strong convenience means that every compact face intersects all coordinate axes of the hyperplanes containing it. 

        \begin{Exam}
            \normalfont Let $f(z,\cj{z}) = \sum_{i=1}^{n}z_{i}^{a_{i}+b_{i}}\cj{z}_{i}^{b_{i}}$ be a mixed Pham-Brieskorn polynomial, where $a_{i},b_{i} \ge 0$ are integers. The face functions of $f(z,\cj{z})$ are the restrictions of $f$ to the subspaces $\co^{I}$, and thus it is strongly convenient. More generally, let $f(z,\cj{z})$ be a mixed function whose Newton polyhedron coincides with that of a Pham-Brieskorn polynomial, with terms possibly above the polyhedron or exactly in the compact faces determined by some subset of vertices in the coordinate axis. Then $f(z,\cj{z})$ is strongly convenient. For instance, $f(z) = z_{1}^{2} + z_{2}^{2} + z_{2}z_{3} + z_{3}^{2} + T(z,\cj{z})$, whose face functions are $f$ itself, $z_{1}^{2}, z_{2}^{2}$, $z_{1}^{2}+z_{2}^{2}$, or $z_{2}^{2}+z_{2}z_{3}+z_{3}^{2}$, provided that $T(z,\cj{z})$ consists of terms above the Newton polyhedron.
        \end{Exam}

        \begin{Exam}
            \normalfont
            Let $F = (f^{1}, f^{2}) : (\co^{4},0) \longrightarrow (\co^{2},0)$ be the mixed map germ whose Newton polyhedron coincides with that of the mixed map given by the coordinate functions $g^{1} = z_{1}^{a} + z_{2}^{b} + z_{3}^{2} + z_{3}^{2}z_{4}$ and $g^{2} = -z_{1}^{a} + 2z_{2}^{b} - z_{3}^{2} + z_{4}^{2}$. The face functions do not involve the monomial $z_{2}^{3}z_{4}$, which lies on a non-compact face. Thus, we reduce the analysis to maps whose coordinate functions are Pham-Brieskorn polynomials. It is straightforward to check that the face maps $F_{P}$ are non-degenerate, and that these are convenient in the sense of Definition \ref{Conv}.
        \end{Exam}
        
        Moreover, in the next subsection, Example \ref{Exmpstrng} presents another strongly convenient map germ. More generally, we can also consider the pullback of strongly convenient mixed maps by mixed coverings to produce more examples as follows.  

        \begin{Prop}\label{ConvPull}
            Let $F: (\co^{n},0) \longrightarrow (\co^{k},0)$ be a (strongly) convenient mixed map germ and $\phi$ a mixed covering. Then $G = \phi^{*}F$ is also (strongly) convenient.
        \end{Prop}
            \begin{proof}
            We first observe two facts. For each $I$, the coordinate functions are related as follows:
            \begin{align*}
                g^{j,I} = \left( f^{j} \circ \phi\right)^{I} = f^{j} \circ \phi^{I} = f^{j,I} \circ \phi,
            \end{align*}
            since the coordinate functions of $\phi$ are in separated variables. This implies that $g^{j,I} \not\equiv 0$ if and only if $f^{j,I} \not\equiv 0$. Secondly, $\phi(V_{G}) = V_{F}$. Suppose that there exists $0 \neq p \in V_{G} \cap \co^{I}$, where $|I| = k$. Then $p \in \co^{*J}$, for some $J \subset I$. Since $\phi$ preserves $\co^{*J}$, we conclude that $\phi(p) \in V_{F} \cap \co^{*J}$. By Proposition \ref{propertiesconv}, this implies that $\phi(p) = 0$, and then $p = 0$, which is a contradiction. For the strong convenience, we have seen that the face maps $G_{\tilde{P}} = F_{P} \circ \phi$, for vectors $P, \tilde{P}$ of positive integers. Since the assertion holds for each $F_{P}$, we conclude by the first part.
            \end{proof}
    
        \begin{Prop}\label{iciscor2}
            Let $F:(\co^{n},0) \longrightarrow (\co^{k},0)$ be a non-degenerate strongly convenient mixed map germ and $I \subset \{1, \dots, n\}$ such that $|I| > k$. Then, for every vector $P$ of positive integers such that $V_{F^{I}_{P}}$ has positive dimension, $F^{I}_{P}$ is a mixed ICIS.
        \end{Prop}
            \begin{proof}
                First, we have that $F^{I}$ is non-degenerate, since $F$ is convenient. On the other hand, since $f^{j,I} \not\equiv 0$ for every $j$, by the same argument of the first item of Proposition \ref{properties}, $(F^{I})_{P} = F_{Q}$ for some vector $Q$. By the strong convenience, $F_{Q}$ is convenient. The result follows from Theorem \ref{icis}.
            \end{proof}

     Lastly, according to \cite[Lemma 28]{Oka2010}, strongly non-degenerate mixed functions have an isolated critical value. In the higher dimensional case, we also prove a similar result about critical points of $F$.
     
     \begin{Prop}\label{disciso}
        Let $F = (f^{1}, \dots, f^{k}): (\co^{n},0) \longrightarrow (\co^{k},0)$ be a strongly non-degenerate mixed map germ and $p \in \Sigma_{F}$. Then $f^{j}(p) = 0$ for some $j = 1, \dots, k$.
    \end{Prop}
        \begin{proof}
        We apply an argument similar to that in the proof of \cite[Lemma 28]{Oka2010}. As before, by the Curve Selection Lemma and Proposition \ref{chmm}, if the statement is false, there exist real analytic curves $w(t) \subset \Sigma_{F}$ and $\alpha_{1}(t), \dots, \alpha_{k}(t) \subset \co$, non-simultaneously vanishing, such that $f^{j}(w(t), \cj{w}(t)) \neq 0$ for every $j$ and
        {\footnotesize{
                \begin{align*}
                    \alpha_{1}(t)\cj{D}f^{1}\left(w(t),\cj{w}(t)\right) + \dots +\alpha_{k}(t)\cj{D}f^{k}\left(w(t),\cj{w}(t)\right) = -\cj{\alpha}_{1}(t)\cj{Df^{1}}\left(w(t),\cj{w}(t)\right) - \dots - \cj{\alpha}_{k}(t)\cj{Df^{k}}\left(w(t),\cj{w}(t)\right).
                \end{align*}}}
        \hskip-4pt Let $I = \{ i : w_{i}(t) \not\equiv 0\}$ and write the respective analytic expansions:
        \begin{align*}
            f^{j,I}(w(t), \cj{w}(t)) &= c_{j}t^{r_{j}} + o(t),\; c_{j} \in \co^{*}\,, r_{j} \ge 1, \\
            w_{i}(t) &= a_{i}t^{p_{i}} + o(t),\; a_{i} \in \co^{*}\,, p_{i} \ge 1, \\
            \alpha_{j}(t) &= b_{j}t^{q_{j}} + o(t),\; b_{j} \in \co^{*}\,,q_{j} \ge 1,
        \end{align*}
        for all $i \in I$ and $j$ such that $\alpha_{j}(t) \not\equiv 0$. Let $P = (p_{1}, \dots, p_{n})$ and $a = (a_{1}, \dots, a_{n})$. For each $i$, the relation above becomes:
         {\footnotesize
                \begin{equation}\label{rel}
                    \begin{split}
                    \alpha_{1}(t)\left( \frac{\partial f^{1,I}_{P}}{\partial \cj{z}_{i}}(a,\cj{a})t^{d_{1}-p_{i}} + o(t)\right) +& \dots + \alpha_{k}(t)\left( \frac{\partial f^{k,I}_{P}}{\partial \cj{z}_{i}}(a,\cj{a})t^{d_{k}-p_{i}} + o(t)\right) \\ &= \\
                    -\cj{\alpha}_{1}(t)\left( \cj{\frac{\partial f^{1,I}_{P}}{\partial z_{i}}}(a,\cj{a})t^{d_{1}-p_{i}} - o(t)\right) -& \dots - \cj{\alpha}_{k}(t)\left( \cj{\frac{\partial f^{k,I}_{P}}{\partial z_{i}}}(a,\cj{a})t^{d_{k}-p_{i}} + o(t)\right),
                    \end{split}
                \end{equation}}    
        \hskip-4pt where $d_{j}$ is the weighted homogeneous degree of the face function $f^{j,I}_{P}$. Equation \ref{rel} now implies that $a \in \co^{*I}$ is a critical point of $\left(F^{I}\right)_{P}$. As we proceeded in Proposition \ref{properties}, since $f^{j,I} \not\equiv 0$ for every $j$, there exists a vector $Q$ such that $F^{I}_{P} = F_{Q}$. Thence, $(a,a') \in \co^{*I} \times \co^{*I^{C}}$ is a critical point of $F_{Q}$ for every $a'$, which contradicts the strong non-degeneracy of $F$.
        \end{proof}
       
    \subsection{Mixed Hamm complete intersections}\label{s2.2}

    A particular example of a genuine mixed map defining an ICIS can be obtained from the previous constructions as follows. Let $\abf = (a_{1}, \dots, a_{n}),$ be a vector of positive integers and $\Lambda = (\lambda_{ij})$ a complex matrix of order $n \times k$. For each $i = 1, \dots, k$, let $f^{i} = \sum_{j=1}^{n}\lambda_{ij}z_{j}^{a_{j}}$ be a complex Pham-Brieskorn polynomial. Hamm showed in \cite{Hamm1972} that for a sufficiently general matrix, the map germ $F = (f^{1}, \dots, f^{k}) : (\co^{n},0) \longrightarrow (\co^{k},0)$ defines an ICIS, which we shall refer to  as \textit{Hamm ICIS}. 
     
    Let $\bbf = (b_{1}, \dots, b_{n})$ be a second vector of non negative integers and consider the mixed Pham-Brieskorn polynomial $g^{i}(z,\cj{z}) = \sum_{j=1}^{n}\lambda_{ij}z_{i}^{a_{i}+b_{i}}\cj{z}_{i}^{b_{i}}$. This type of mixed function is a particular case of the construction discussed in Subsection \ref{s1.2}. If we require all $k\times k$-minors of $\Lambda$ being nonzero, the map $H : (\co^{n},0) \longrightarrow (\co^{k},0)$ whose coordinate functions are $h^{i}(z) = \sum_{j=1}^{n}\lambda_{ij}z_{j}$ is non-degenerate. By Proposition \ref{properties} and Theorem \ref{icis}, we conclude that the mixed Hamm map defines an ICIS, which we call \textit{mixed Hamm ICIS}. If $b_{i} \ge 2$ for every $i = 1, \dots, n$, notice that $G$ is not an algebraic ICIS by Proposition \ref{algicis}. 

    In \cite[Theorem 4.1]{Ruas2002}, it is shown that complex and mixed Pham-Brieskorn polynomials are topologically equivalent, and this assertion easily applies to the maps constructed above. Furthermore, Oka proved in \cite{Oka2011} that the links are smoothly equivalent. In this section, we extend this result to mixed Hamm ICIS. We fix the vectors $\abf$ and $\bbf$ of integers. For each $i = 1, \dots, k$, let $f^{i}$ and $g^{i}$ be mixed and complex Pham-Brieskorn polynomials, respectively, as before. Define the following family of mixed maps:
    \begin{align}\label{Hamm def}
        G_{t}(z,\cj{z}) = (1-t)G(z,\cj{z}) + tF(z,\cj{z}),
    \end{align}
    where $F = (f^{1}, \dots, f^{k})$ and $G = (g^{1}, \dots, g^{k})$. Let us denote $V_{t}^{i} = \left(g_{t}^{i}\right)^{-1}(0)$, where $g_{t}^{i}$ are the coordinate functions of $G_{t}$, and $V_{t} = G_{t}^{-1}(0)$. We fix the notation $\co^{n}_{*} := \co^{n}\setminus\{0\}$ and $\mathbb{B}_{*,r}^{2n} = \mathbb{B}_{r}^{2n} \setminus\{0\}$.

    \begin{Rem}\label{transradial}
        \normalfont
        Let $w \in \co^{n}_{*}$, $I = \{ i : w_{i} = 0\}$, and fix $t \in (0,1)$. In \cite[Lemma 2]{Oka2011}, Oka proves the existence of a curve $\xi(r) = (\xi_{1}(r), \dots, \xi_{n}(r)) \subset \co^{n}$ with the following properties:
        \begin{enumerate}
            \item The parameter $r \in (0,\infty)$;
            \item For each $i \in I$, $\xi_{i} = 0$;
            \item For each $i$, $\xi_{i}^{a_{i}}(t + (1-t)\nm \xi_{i} \nm^{2b_{i}}) = rw_{i}^{a_{i}}(t + (1-t)\nm w_{i}\nm^{2b_{i}})$;
            \item The tangent vector $u = \frac{d \xi}{d r}(1)$ is not tangent to the sphere $\mathbb{S}_{r}^{2n-1}$.
        \end{enumerate}
        Let $j = 1, \dots, k$ and consider the coordinate function $g_{t}^{j}$. Suppose further that $w \in V_{t}^{i} \cap \mathbb{S}_{r}^{2n-1}$. Then $g_{t}^{j}(\xi, \cj{\xi}) \equiv 0$ and $u$ is a tangent vector of $V_{t}^{i}$ at $w$ which is not tangent to the sphere. Hence, $V_{t}^{i} \pitchfork \mathbb{S}_{r}^{2n-1}$ for all $r > 0$.
    \end{Rem}

    \begin{Lem}\label{lem}
        Let $G_{t}(z,\cj{z})$ be as above, where $0 \le t \le 1$. The following facts hold true.
        \begin{enumerate}
            \item The map $G_{t}$ is a mixed ICIS.
            \item The variety $V_{t}$ intersects the sphere $\mathbb{S}_{r}^{2n-1}$ transversely for all $r > 0$. 
            \item Let $r > 0$ be fixed. Then there exists a family of diffeomorphisms
            \begin{align*}
                \psi_{t} : (\mathbb{B}_{*,r}^{2n}, E_{t}(r)) \longrightarrow (\mathbb{B}_{*,r}^{2n}, E_{1}(r)),
            \end{align*}
            where $E_{t}(r) = \{ z \in \co^{n}_{*} : G_{t}(z) = 0, \nm z \nm \le r\}$. Moreover, it also restricts as diffeomorphisms
            \begin{align*}
                \psi_{t} : (\mathbb{S}_{r}^{2n-1}, \partial E_{t}(r)) \longrightarrow (\mathbb{S}_{r}^{2n-1},\partial E_{1}(r)),
            \end{align*}
            where $\partial E_{t}(r) = \{ z \in \co^{n}_{*} : G_{t}(z) = 0,\nm z \nm = r\}$.
        \end{enumerate}
    \end{Lem}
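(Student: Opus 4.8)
All three assertions rest on one structural feature of these maps: because each coordinate $g_t^i$ is a $\Lambda$-linear combination of functions of a single variable, every partial derivative factors through the corresponding entry of $\Lambda$. Writing $g_t^i=\sum_{j=1}^n\lambda_{ij}\,C_j(t,z_j)$ with $C_j(t,z_j)=z_j^{a_j}\rho_j(t,z_j)$ and $\rho_j(t,z_j)>0$ whenever $z_j\neq0$, one computes
\[
\frac{\partial g_t^i}{\partial z_j}=\lambda_{ij}\,A_j(t,z_j),\qquad
\frac{\partial g_t^i}{\partial\cj{z}_j}=\lambda_{ij}\,B_j(t,z_j),
\]
where $A_j=P_j\,z_j^{a_j-1}$ and $B_j=Q_j\,z_j^{a_j+1}$ with $P_j,Q_j\in\re$, $P_j>0$, $Q_j\ge0$, and, crucially, $P_j-Q_j|z_j|^2=a_j\rho_j>0$ for $z_j\neq0$. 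Since $a_j\ge2$, both $A_j$ and $B_j$ vanish when $z_j=0$, so in every relation below only the support $I:=\{\,j:z_j\neq0\,\}$ plays a role. The plan is to let this factorization decouple each criterion into one scalar equation per variable and then let the genericity hypothesis (all $k\times k$ minors of $\Lambda$ nonzero) finish the argument; since $\rho_j,P_j,Q_j$ depend tamely on $t$, the analysis is uniform in $t\in[0,1]$, exactly as parts (1)--(2) require.

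For part (1) I would argue pointwise with Proposition \ref{chmm} (no Curve Selection Lemma is needed, because the factorization is global, not merely on face functions). If $z\in\Sigma_{G_t}\cap V_t$ then there is $\alpha=(\alpha_1,\dots,\alpha_k)\neq0$ realizing \eqref{eqchmm}, whose $j$-th component reads $c_j B_j=\overline{c_j}\,\overline{A_j}$ with $c_j:=\sum_i\alpha_i\lambda_{ij}$. Taking moduli on $I$ and using $P_j>Q_j|z_j|^2$ forces $c_j=0$ for every $j\in I$, that is $\alpha^{\mathsf T}\Lambda_I=0$, so $\operatorname{rank}\Lambda_I<k$. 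On the other hand $G_t(z)=0$ says $\Lambda_I\mu=0$ for the vector $\mu=(C_j(t,z_j))_{j\in I}$, all of whose entries are nonzero, so $\operatorname{rank}\Lambda_I<|I|$. If $I\neq\emptyset$, then $\min(|I|,k)$ of the columns of $\Lambda_I$ would be linearly dependent, contradicting the $k\times k$-minor hypothesis, which makes any $\le k$ columns independent. Hence $I=\emptyset$, i.e. $z=0$, so $\Sigma_{G_t}\cap V_t=\{0\}$ and $G_t$ is a mixed ICIS, its zero set being a genuine complete intersection of dimension $n-k>0$.

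For part (2) I would use the radial (Milnor) criterion for tangency: $V_t$ fails to meet $\mathbb{S}_{r}^{2n-1}$ transversally at $z\neq0$ precisely when $z=\sum_i\big(c_i\,\cj{Dg_t^i}+\overline{c_i}\,\cj{D}g_t^i\big)$ for some $c_i\in\co$. Componentwise this reads $z_j=d_j\overline{A_j}+\overline{d_j}B_j$ with $d_j:=\sum_i c_i\overline{\lambda_{ij}}$; together with its conjugate it is a $2\times2$ linear system in $(d_j,\overline{d_j})$ whose determinant $|z_j|^{2(a_j-1)}\big(P_j^2-Q_j^2|z_j|^4\big)$ is strictly positive because $P_j>Q_j|z_j|^2$. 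Solving gives $d_j=\tau_j\,z_j^{a_j}$ with $\tau_j>0$ for $j\in I$. Pairing $d=(d_j)$ with $\mu=(C_j)=(z_j^{a_j}\rho_j)$ in the Hermitian product then yields a contradiction: on the one hand $\langle\mu,d\rangle=\sum_{j\in I}\rho_j\tau_j|z_j|^{2a_j}>0$, while on the other hand $\langle\mu,d\rangle=\sum_i\overline{c_i}\,(\Lambda_I\mu)_i=0$ since $\Lambda_I\mu=0$. The positivity $P_j>Q_j|z_j|^2$ holds at every radius, which is exactly why transversality is obtained for all $r>0$ (and uniformly in $t$), not merely for small spheres.

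Finally, part (3) is an Ehresmann--Thom isotopy argument built on (1) and (2). I would set $W=\{(z,t)\in\mathbb{B}_{r}^{2n}\times[0,1]:G_t(z)=0\}$ with projection $\pi\colon W\to[0,1]$. By part (1) the only singularity of each $V_t$ is the common, $t$-independent point $0$, so over $\{z\neq0\}$ the space $W$ is smooth and $\pi$ is a submersion; by part (2) it is moreover transverse to $\mathbb{S}_{r}^{2n-1}\times[0,1]$. One then lifts $\partial_t$ to a vector field on $\mathbb{B}_{r}^{2n}\times[0,1]$ that is tangent to $W$, tangent to $\mathbb{S}_{r}^{2n-1}\times[0,1]$ near the boundary (the two tangency requirements being compatible precisely because of the transversality in (2)), and vanishing on $\{0\}\times[0,1]$; integrating it from $t=0$ produces the family $\psi_t$ of diffeomorphisms of pairs that restricts to the boundary spheres as stated. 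The main obstacle is the behaviour at the fixed singular point $0$, where no honest submersion lift exists. This I would resolve using that transversality to \emph{all} spheres (part (2)) makes each pair $(\mathbb{B}_{r}^{2n},\overline{E_t(r)})$ conical at $0$ with cone link $\partial E_t(r)$: one trivializes this family of cones near $0$ and glues it to the Ehresmann trivialization away from $0$, obtaining an isotopy that fixes $0$ and is smooth off $0$, in the spirit of the links-equivalence arguments of \cite{Ruas2002} and \cite{Oka2011}.
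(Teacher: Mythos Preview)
Your proof is correct and takes a genuinely different route from the paper's. The paper argues much more briefly: for (1) it observes that, for $0<t<1$, the Newton boundary of each $g_t^i$ coincides with that of the holomorphic Pham--Brieskorn $f^i$ (the monomials $z_j^{a_j}$ dominate the mixed ones), so $G_t$ inherits the non-degeneracy already established for $F$ and Theorem~\ref{icis} gives the mixed ICIS property; for (2) it simply cites \cite[Lemma~2]{Oka2011} for each hypersurface $V_t^i$ and passes to the intersection; for (3) it names Ehresmann's theorem for subbundles and stops.

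Your approach instead exploits directly the separated-variable structure $g_t^i=\sum_j\lambda_{ij}C_j(t,z_j)$ together with the $k\times k$-minor hypothesis on $\Lambda$, reducing both (1) and (2) to explicit scalar relations per variable; the Hermitian pairing $\langle\mu,d\rangle$ in (2) is a clean way to extract the contradiction. This is more elementary and entirely self-contained: it avoids the Newton-polyhedron machinery, does not appeal to \cite{Oka2011}, and in fact makes part (2) more transparent than the paper's one-line reduction (transversality of each $V_t^i$ to the sphere does not, by itself, obviously imply transversality of the intersection $\bigcap_iV_t^i$, so your direct argument is arguably tighter here). Conversely, the paper's route situates the lemma within the general non-degeneracy framework of Section~\ref{s2} and would apply verbatim to any family sharing the same Newton boundary. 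Your treatment of the singular point in (3) via the conical structure is also more careful than the paper's bare invocation of Ehresmann. One small remark: your line ``since $a_j\ge2$'' is the standard Pham--Brieskorn convention and is needed for $A_j,B_j$ to vanish at $z_j=0$, although the paper only writes ``positive integers''.
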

    \begin{proof}
        In the first item, for $t = 0$ and $t = 1$ the assertion is already proved. In the other cases, it is enough to notice that the Newton polyhedron of $g_{t}^{i}$ is the polyhedron of a complex Pham-Brieskorn polynomial and we reduce to a case already settled. For the second item, by Remark \ref{transradial}, the variety $V_{t}^{i}$ intersects $\mathbb{S}^{2n-1}_{r}$ transversely, and thus the same assertion holds for $V_{t} = V_{t}^{1} \cap \dots \cap V_{t}^{k}$. The last statement follows from Ehresmann fibration theorem for subbundles applied to the canonical projections
        \begin{align*}
            \pi : E(r) \times I \longrightarrow I \quad \text{and} \quad \partial \pi: \partial E(r) \times I \longrightarrow I,
        \end{align*}
        where 
        \begin{align*}
            E(r) &= \{ (z,t) \in \co^{n}_{*} \times I : G_{t}(z) = 0, \nm z \nm \le r\}, \\
            \partial E(r) &= \{ (z,t) \in \co^{n}_{*} \times I : G_{t}(z) = 0,  \nm z \nm = r\}.
        \end{align*}
    \end{proof}

    \begin{Rem}
        \normalfont
        Let $V \subset (\co^{n},0)$ be a real analytic variety with an isolated singularity at the origin. Then $V \pitchfork \mathbb{S}_{r}^{2n-1}$ for every sufficiently small $r>0$. However, in the previous proof one cannot ensure the same range of $r$ for all $t \in [0,1]$. This condition is called \textit{Milnor radius stability} in \cite{Nguyen2022}. Therefore, the assertion in Remark \ref{transradial} is essential.
    \end{Rem}

    Let $I \subset \{1, \dots, n\}$ be a non-empty subset such that $|I| > k$. As before, let us denote with an upper index $I$ the restriction of the sets and maps to the subspace $\co^{I} = \{ z \in \co^{n} : z_{i} = 0 \; \text{if} \; i \notin I\}$. Observe that, by non-degeneracy, the restrictions of the maps and sets in Lemma \ref{lem} to $\co^{I}$ share the same properties. Then, we may conclude the discussion as follows. 

    \begin{The}\label{isofiber}
        Let $F$ be a complex Hamm map and $G_{t}$ its deformation as above. For each $r > 0$ and $0 \le t \le 1$ fixed, the following statements hold. 
        \begin{enumerate}
            \item There exists a diffeomorphism 
                  \begin{align*}
                    \psi_{t} : (\mathbb{S}_{r}^{2n-1}, K_{G_{t}}) \longrightarrow (\mathbb{S}_{r}^{2n-1}, K_{F}),
                  \end{align*}
                  where $K_{G_{t}}$ and $K_{F}$ are the links defined by $F$ and $G_{t}$, respectively.
            \item Let $I \subset \{1, \dots, n\}$ such that $|I| > k$. Then the map $\psi_{t}$ also restricts to a diffeomorphism
                  \begin{align*}
                    \psi^{I}_{t} : (\mathbb{S}_{r}^{2|I|- 1}, K_{G^{I}_{t}}) \longrightarrow (\mathbb{S}_{r}^{2|I| - 1}, K_{F^{I}}),
                  \end{align*}
                  where $K_{G^{I}_{t}}$ and $K_{F^{I}}$ are the links defined by the restrictions $F^{I}$ and $G^{I}_{t}$, respectively.
        \end{enumerate}
    \end{The}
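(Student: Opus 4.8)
The plan is to derive both claims from Lemma~\ref{lem} by specializing the homotopy parameter to the endpoints, the only genuine work being to arrange that the ambient isotopy respects every coordinate subspace.

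Item (1) is immediate. Since $G_{0}=G$ and $G_{1}=F$, we have $E_{0}(r)=E_{G}(r)$ and $E_{1}(r)=E_{F}(r)$, whence $\partial E_{0}(r)=K_{G}$ and $\partial E_{1}(r)=K_{F}$. Evaluating Lemma~\ref{lem}(3) at $t=1$ and setting $\psi:=\psi_{1}$ thus produces a diffeomorphism $\psi\colon(\mathbb{S}_{r}^{2n-1},K_{G})\to(\mathbb{S}_{r}^{2n-1},K_{F})$ directly.

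For item (2) I would first record the coordinate-subspace identifications $\mathbb{S}_{r}^{2|I|-1}=\mathbb{S}_{r}^{2n-1}\cap\co^{I}$, $K_{G^{I}}=K_{G}\cap\co^{I}$ and $K_{F^{I}}=K_{F}\cap\co^{I}$; the last two hold because $G_{t}^{I}=(1-t)G^{I}+tF^{I}$ and, by the remark preceding the theorem together with $|I|>k$, each restricted family $\{G_{t}^{I}\}_{t}$ again satisfies the conclusions of Lemma~\ref{lem}(1)--(2) on $\co^{I}$. Granting this, it is enough to build the $\psi$ of item (1) so that it preserves each coordinate subspace $\co^{I}$: then $\psi^{I}:=\psi|_{\mathbb{S}_{r}^{2n-1}\cap\co^{I}}$ carries $K_{G}\cap\co^{I}=K_{G^{I}}$ onto $K_{F}\cap\co^{I}=K_{F^{I}}$, which is exactly the assertion.

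The core of the proof is therefore to revisit the Ehresmann step of Lemma~\ref{lem} and choose the lifting vector field with more care. That field is required to project to $\partial/\partial t$ and to be tangent to $E(r)$ and to the sphere bundle $\{\nm z\nm=r\}\times[0,1]$; I would demand in addition that it be tangent to every coordinate hyperplane $H_{i}=\{z_{i}=0\}$, which, since $\co^{I}=\bigcap_{i\notin I}H_{i}$, forces tangency to all $\co^{I}$ and hence makes the flow preserve each $\co^{I}$ and the restrictions $\psi^{I}$ meaningful. Such a field can be built inductively over the finite poset of coordinate subspaces, starting on the deepest nonempty strata $|I|=k+1$, where the restricted family is a transverse ICIS family in its own right, and extending outward with a partition of unity subordinate to this stratification. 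The hard part is exactly this simultaneous-tangency demand: a single field that lifts $\partial/\partial t$, stays tangent to $\{G_{t}=0\}$ (a genuine submanifold on $E(r)$, where it avoids the origin), to the sphere, and to every coordinate hyperplane at once. This is a stratified, or Thom--Mather controlled, version of the Ehresmann fibration theorem, and what makes the inductive patching consistent is that all the intersections involved are transverse, a fact already guaranteed by the non-degeneracy of $F$ and $G$ and by Lemma~\ref{lem}.
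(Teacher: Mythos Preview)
Your argument is correct and follows the paper's route: item~(1) is Lemma~\ref{lem}(3) at $t=1$, and item~(2) rests on the observation (stated just before the theorem) that the restricted families $G_{t}^{I}$ again satisfy the conclusions of Lemma~\ref{lem} on $\co^{I}$. The paper stops at that observation, whereas you go further and spell out the stratified Ehresmann step---choosing a lift of $\partial/\partial t$ tangent to every coordinate hyperplane---needed to make a \emph{single} $\psi$ restrict compatibly to each $\co^{I}$; this is a genuine addition of rigor, since the paper's discussion as written only produces a separate diffeomorphism for each $I$ and leaves the compatibility of these with one ambient $\psi$ implicit.
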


    \begin{Exam}\label{Exmpstrng}
        \normalfont
        Based on the construction of mixed Hamm-ICIS in the previous subsection, we present an example of a non-degenerate and strongly convenient mixed map germ $F = (f^{1}, \dots, f^{k}): (\co^{n} \times \co^{k},0) \longrightarrow (\co^{k},0)$. We observe that the associated Newton polyhedron has a part similar to that of a mixed Hamm map, but it is more general. Let $\abf = (a_{1}, \dots, a_{n})$ and $\bbf = (b_{1}, \dots, b_{n})$ be vectors of positive integers, such that $a_{i} \neq b_{i}$ for all $i$. As before, we choose a $n \times k$ matrix $\Lambda= (\lambda_{ij})$ in which every $k \times k$-minor is nonzero and define:
        \begin{align*}
            f^{j}(z,\cj{z}, w, \cj{w}) = f^{j}_{\abf, \bbf}(z, \cj{z}) + w_{j}^{c_{j}+d_{j}}\cj{w}_{j}^{d_{j}} + T^{j}(z,\cj{z}, w,\cj{w}),
        \end{align*}
        where $f^{j}_{\abf, \bbf}(z, \cj{z}) = \sum_{i=1}^{n}\lambda_{ij}z_{i}^{a_{i}+b_{i}}\cj{z}_{i}^{b_{i}}$, $c_{j}, d_{j} \ge 1$ are integers, and 
        \begin{align*}
            T^{j}(z,\cj{z}, w,\cj{w}) = \sum_{i=1}^{n}\delta_{j}^{i}z_{j}^{\alpha_{ij}+\beta_{ij}}\cj{z}_{j}^{\beta_{ij}}w_{j}^{x_{ij}+y_{ij}}\cj{w}_{j}^{y_{ij}}h_{j}^{i}(w,\cj{w}),
        \end{align*}
        with $\delta_{j}^{i} = 0,1$, $\delta_{j}^{i} \neq 0$ for some $i$, $\alpha_{ij} > a_{j}, \beta_{ij} >b_{j}$, and $x_{ij}, y_{ij} > 0$ for all $i,j$, and $h_{j}^{i}: (\co^{k},0) \longrightarrow (\co,0)$ is a mixed function. We further require one condition more: all $l \times l$ minors of $\Lambda$ are nonzero for every $l \le k$. 
    
        We first prove the convenience. Note that $f^{j}$ is not a convenient mixed function, due to the term $T^{j}$. Let $I \subset \{1, \dots, n+k\}$ such that $|I| > k$. This implies that we set zero to at most $n-1$ coordinates. Thus, $z_{i} \neq 0$ for some $i$ and each $f^{j}$ has a monomial of the form $\lambda_{ij}z_{i}^{a_{i}+b_{i}}\cj{z}_{i}^{b_{i}}$, then $f^{j,I} \not\equiv 0$ for every $j$. Let $|I| = k$, that is, we set zero to at least $n$ coordinates. If $z = 0$, then $V_{F} \cap \co^{I} = \{0\}$. Thus, it remains to verify the case in which $w_{i} = 0$ for some $i$. We denote by $\co^{n}_{z}$ and $\co^{k}_{w}$ the subspaces of $\co^{n+k}$ associated with the coordinates $z$ and $w$, respectively. 
    
        In this case, $l := |I^{C} \cap \co^{k}_{w}|$ satisfies $l > 0$. Let us form the map $F^{I}_{z} : (\co^{m}_{z},0) \longrightarrow (\co^{l},0)$ whose coordinate functions are those $f^{j}$ for which $w_{j} = 0$, and then this map depends only on $(z,\cj{z})$. Notice that:
        \begin{align*}
            |I \cap \co^{k}_{w}| + |I^{C} \cap \co^{k}_{w}| &= k, \\
            |I \cap \co^{n}_{z}| + |I^{C} \cap \co^{n}_{z}| &= n.
        \end{align*}
        This leads to $m = l$. Since $F^{I}_{z}$ is a mixed Hamm map, the condition on $l \times l$ minors implies that it is non-degenerate, and $F^{I}_{z} = 0$ if and only if $z = 0$. As we have seen, in this case, $w = 0$ and we prove that $F$ is convenient.
    
        The choice of exponents in the mixed functions $T^{j}$ gives that the Newton polyhedron of the coordinate functions of $F$ is the same as that of the map $G :(\co^{n}\times \co^{k},0) \longrightarrow (\co^{k},0)$ whose coordinate functions are:
        \begin{align*}
            g^{j}(z,\cj{z}, w, \cj{w}) = f^{j}_{\abf, \bbf}(z,\cj{z}) + w_{j}^{c_{j}+d_{j}}\cj{w}_{j}^{c_{j}}.
        \end{align*}
        One can see $G$ as the pullback of a linear holomorphic map by a mixed covering. The linear map $H : (\co^{n}\times \co^{k},0) \longrightarrow (\co^{k},0)$ has coordinate functions $H^{j} = \sum_{i=1}^{k}\lambda_{ij}z_{i} + w_{j}$. The face maps are restrictions of $G$ to subspaces of $\co^{n+k}$. If $|I| = k$, then the same argument shows that $V_{H^{I}} \cap \co^{I} = \{0\}$. If $|I| > k$ and $|I \cap \co_{z}^{n}| > k$, we are in the situation of the mixed Hamm ICIS, which is non-degenerate. If $|I \cap \co_{z}^{n}| = l < k$ and $|I \cap \co_{w}^{k}| = m$ with $l+m > k$, then the diagonal block of the Jacobian matrix of $H^{I}$ related to $w$ has rank $m$. Since the $l \times l$ minors of $\Lambda = (\lambda_{ij})$ are non-vanishing, up to a coordinate change, $J_{H^{I}}$ is decomposed into two blocks that form a matrix of rank $k \times k$. 
        This proves the non-degeneracy of $G$ by the third item of Proposition \ref{properties}.
    \end{Exam}

    \subsection{Milnor sets}

     Oka in \cite[Theorem 33]{Oka2010} proved that strongly non-degenerate convenient mixed functions have Milnor fibrations on the tube and the sphere, which are smoothly equivalent. We shall relate the non-degeneracy property to the Milnor sets of a mixed map germ, and then derive the existence of a tube fibration under certain conditions. We refer the reader to \cite{CisnerosMolina2023} and \cite{Ribeiro2019} for further details on the subject of Milnor fibrations. 

     We recall some definitions. A map germ $F: (\co^{n},0) \longrightarrow (\co^{k},0)$ admits a Milnor fibration on the tube if, for each $r > 0$ sufficiently small, there exists $\delta = \delta(r) > 0$ such that the map
     \begin{align}\label{fibtube}
         F : \mathbb{B}_{r}\cap F^{-1}(\mathbb{S}_{\delta}^{2k-1}\setminus \Delta_{r}) \longrightarrow \mathbb{S}_{\delta}^{2k-1}\setminus\Delta_{r}
     \end{align}
     is a locally trivial fibration over its image, where $\mathbb{B}_{r}$ is the closed ball centered at the origin with radius $r$ and $\Delta_{r} = F(\Sigma_{F} \cap \mathbb{B}_{r})$ is the discriminant of $F$ restricted to $\mathbb{B}_{r}$. The existence of Milnor fibrations is related to the following set.

     Let $\rho(z) = \sum_{i=1}^{n}\nm z \nm^{2}$ be the square distance function and consider an open neighborhood $\mathring{\mathbb{B}}_{r} \subset \co^{n}$ of the origin. Consider the following set:
     \begin{align*}
        M(F) := \{ z \in \mathring{\mathbb{B}}_{r} : F \not\pitchfork_{z} \rho\}.
     \end{align*}
     Notice that $M(F)$ consists of the critical points of the map $(F, \rho) : \mathring{\mathbb{B}}_{r} \longrightarrow \co^{k} \times \re$. Hence, this set is closely related to the transversality property. In general, $M(F)$ is not well-defined as a set germ (see, for instance, \cite{Ribeiro2019}). In the affirmative case, it is called the \textit{Milnor set} of $F$. For instance, if the image $\text{Img}(F)$ of $F$ and the discriminant $\Delta_{F} := F(\Sigma_{F})$ are well-defined as set germs, a condition called \textit{nice}, the Milnor set is also well-defined. For details, we refer the reader to \cite{Ribeiro2019}, \cite{Tibar2019}, and \cite{Ribeiro2018}, and the references therein. By \cite[Lemma 3.3]{Ribeiro2019}, the following condition ensures the existence of the Milnor fibration on the tube through the Milnor set:
     \begin{align}\label{condition}
        \overline{M(F) \setminus F^{-1}(\Delta_{F})} \cap V_{F} \subseteq \{0\}.
     \end{align}

     We discuss the Milnor fibration on the sphere. Let $F : (\co^{n},0) \longrightarrow (\co^{k},0)$ be a nice mixed map germ and suppose that $\Delta_{F}$ is \textit{linear}, which means that it is a union of lines passing through the origin. This is also known as \textit{radial discriminant}. Let $\mathring{\mathbb{B}}_{r} \subset \co^{n}$ be an open neighborhood of the origin and consider the map:
     \begin{align}\label{sph1}
        \phi_{F} := \frac{F}{\nm F \nm} : \mathring{\mathbb{B}}_{r} \setminus F^{-1}(\Delta_{F}) \longrightarrow \mathbb{S}^{2k-1}.
     \end{align}
     We say that $F$ admits a Milnor fibration on the sphere if, for every sufficiently small $r> 0$, its restriction
     \begin{align}\label{sph2}
        \frac{F}{\nm F \nm} : \mathbb{S}_{r}^{2n-1}\setminus F^{-1}(\Delta_{F}) \longrightarrow \mathbb{S}^{k-1}\setminus \pi_{\eta}(\Delta_{F})
    \end{align}
    is a locally trivial fibration, where $\pi_{\eta} : \mathbb{S}_{\eta}^{2k-1} \longrightarrow \mathbb{S}^{k-1}$ is the normalization map and $\eta>0$ is a linearity radius. In \cite[Theorems 2.13 and 2.16]{CisnerosMolina2023} and \cite{Ribeiro2019b} is shown the following. If $F$ admits a Milnor fibration on the tube, and in addition, the map \eqref{sph2} is a submersion for every sufficiently small $r>0$, then there exists a Milnor fibration on the sphere. This condition is equivalent to the Milnor set $M(\phi_{F}) \subset F^{-1}(\Delta_{F})$ and is called $d$-regularity in \cite{CisnerosMolina2023} and $\rho$-regularity in \cite{Ribeiro2019}. Moreover, these fibrations are smoothly equivalent, provided that we restrict \eqref{fibtube} to the interior $\mathring{\mathbb{B}}_{r}$ of the ball $\mathbb{B}_{r}$. The most common examples of maps for which \eqref{sph2} is a submersion are those with the radial weighted homogeneous property (see \cite{Ribeiro2019b} for details).

    \begin{Rem}\label{remmilnorset}
        \normalfont
        Assume that $F$ is a nice mixed map germ and let $S = M(F), M(\phi_{F})$. Observe that $\Sigma_{F} \subset F^{-1}(\Delta_{F}) \cap S$. Moreover, suppose that $0 \not\in \overline{S\setminus F^{-1}(\Delta_{F})}$. Then there exists $r > 0$ such that $\mathring{\mathbb{B}}_{r} \cap S = \mathring{\mathbb{B}}_{r} \cap F^{-1}(\Delta_{F})$, where $\mathring{\mathbb{B}}_{r} \subset \co^{n}$ is an open neighborhood of the origin. Hence, we obtain that $S\setminus F^{-1}(\Delta_{F}) = \emptyset$ as a set germ at the origin.
    \end{Rem}

    We recall the following notation. Let $F = (f^{1}, \dots, f^{k}) : (\co^{n},0) \longrightarrow (\co^{k},0)$ be a mixed map germ and $I \subset \{1, \dots, n\}$ a non-empty subset. Then $J_{I} = \{j : f^{j,I} \not\equiv 0\}$ and $\left(J_{I}\right)^{C}$ denotes its complement. Moreover, the set of non-vanishing subspaces of $F$ is defined by:
   \begin{align*}
       \mathcal{I}_{F} = \{ I \subset \{1, \dots, n\} : \left( J_{I}\right)^{C} = \emptyset\}.
   \end{align*} 

   Definition \ref{Conv} and the notion of non-vanishing spaces motivate the following.

    \begin{Def}
        Let $F = (f^{1}, \dots, f^{k}): (\co^{n},0) \longrightarrow (\co^{k},0)$ be a nice mixed map germ. We say that $F$ is Milnor convenient with respect to $S$, where $S$ denotes $M(F)$ or $M(\phi_{F})$, if the following condition hold for every $I \notin \mathcal{I}_{F}$:
        \begin{align*}
            \overline{S \setminus F^{-1}(\Delta_{F})} \cap V_{F} \cap \co^{I} \subseteq \{0\}.
        \end{align*} 
    \end{Def}

    For instance, if $F$ is convenient as in Definition \ref{Conv}, then $F$ is also Milnor convenient with respect to $M(F)$ and $M(\phi_{F})$. In \cite[Example 6 and Lemma 6]{Ribeiro2019b} are presented examples of nice mixed map germs $F : (\co^{n},0) \longrightarrow (\co^{n-1},0)$ such that $V_{F} \cap \co^{I} \neq \{0\}$ for $|I| = k$, but the condition above holds for $S = M(F)$. The main result of this subsection relates the Newton non-degeneracy property and Milnor sets.

    \begin{The}\label{Milnorfib}
         Let $F: (\co^{n},0) \longrightarrow (\co^{k},0)$ be a non-degenerate nice mixed map germ and let $S$ denote $M(F)$ or $M(\phi_{F})$.
         \begin{enumerate}
             \item If $F$ is convenient, then $S \cap V_{F} = \{0\}$.
             \item If $F$ is Milnor convenient with respect to $S$, then $\overline{S \setminus F^{-1}(\Delta_{F}})\cap V_{F} \subseteq \{0\}$. 
         \end{enumerate}    
         In particular, if $F$ is Milnor convenient with respect to $M(F)$, then it admits a Milnor fibration on the tube.
    \end{The}
        \begin{proof}
            We present the proof of the second item, which also applies to the first one. Let $S = M(F)$ or $S = M(\phi_{F})$. We may suppose $\overline{S\setminus F^{-1}(\Delta_{F}} )\neq \emptyset$. The argument is based on the proofs of \cite[Lemma 31]{Oka2010} and \cite[Theorem 3.5]{Chen2014a}. If this statement is false, by the characterization of points in the Milnor sets presented in \cite[Propositions 5 and 7]{Ribeiro2021} and the Curve Selection Lemma, there exists a real analytic curve $w(t) \subset \overline{S \setminus F^{-1}(\Delta_{F}}) \cap V_{F}$ such that $\lim_{t \to 0} w(t) = 0$ and others real analytic curves $\beta_{1}(t), \dots, \beta_{k}(t) \in \co, \lambda(t) \subset$ non-simultaneously vanishing such that:
            \begin{align}\label{relation}
                \lambda(t)w(t) = \sum_{j=1}^{k}\beta_{j}(t)\cj{D}f^{j}(w(t), \cj{w}(t)) + \cj{\beta}_{j}(t)\cj{Df^{j}}(w(t), \cj{w}(t)).
            \end{align}
            Let us denote $I = \{ i : w_{i}(t) \not\equiv 0\}$. We may write the analytic expansions of the curves:
            \begin{align*}
                \lambda(t) &= \lambda_{0}t^{s} + o(t),\; \lambda_{0} \in \re\,, s \ge 1, \\
                w_{i}(t) &= a_{i}t^{p_{i}} + o(t),\; a_{i} \in \co^{*}\,, p_{i} \ge 1, \\
                \beta_{j}(t) &= b_{j}t^{q_{j}} + o(t),\; b_{j} \in \co^{*}\,,q_{j} \ge 1,
            \end{align*}
            for all $i \in I$ and $j$ such that $\beta_{j} \not\equiv 0$. For each $i$, the relation above becomes
            {\footnotesize
            \begin{align*}
                a_{i}\lambda_{0}t^{s+p_{i}} + o(t) = \left( b_{1}\frac{\partial f_{P}^{1,I}}{\partial \cj{z}_{i}}+ \cj{b}_{1}\cj{\frac{\partial f_{P}^{1,I}}{\partial z_{i}}}\right)t^{d_{1}+q_{1}-p_{i}} + o(t) + \dots 
                + \left( b_{k}\frac{\partial f_{P}^{k,I}}{\partial \cj{z}_{i}}+ \cj{b}_{k}\cj{\frac{\partial f_{P}^{k,I}}{\partial z_{i}}}\right)t^{d_{k}+q_{k}-p_{i}} + o(t),
            \end{align*}}
            \hskip -4pt where $d_{j}$ is the weighted homogeneous degree of $f^{j,I}_{P}$. We may suppose $q_{1}+d_{1} \le \dots \le q_{k}+d_{k}$. Moreover, let $l \le k$ such that $q_{1} + d_{1} = \dots = q_{l} + d_{l}$. Without loss of generality, suppose that $\beta_{j} \not\equiv 0$ for every $j = 1, \dots, l$. It follows that:
            \begin{align}\label{loworder}
                \sum_{j=1}^{l}b_{j}\frac{\partial f^{j,I}_{P}}{\partial \cj{z}_{i}}(a,\cj{a}) + \cj{b}_{j}\cj{\frac{\partial f^{j,I}_{P}}{\partial z_{i}}}(a,\cj{a}) = 
                \begin{cases}
                    0, &\, \text{if} \; \lambda(t) \equiv 0, \\
                    0,&\, \text{if}\; q_{1} + d_{1} - p_{i} < s + p_{i}, \\
                    \lambda_{0}a_{i},&\, \text{if}\; q_{1} + d_{1} - p_{i} = s+p_{i}.
                \end{cases} 
            \end{align}
           Let us define $K = \{ i : q_{1}+d_{1} - p_{i} = s + p_{i}\}$. We claim that $\lambda(t) \not\equiv 0$ and $K \neq \emptyset$. In this case, $w(t) \subset V_{F}$, and then $a \in \co^{*I} \cap \left(F^{I}\right)_{P}^{-1}(0)$ is a critical point of $\left(F^{I}\right)_{P}$. By the Milnor convenience, we obtain that $F^{I}$ is non-degenerate, which is a contradiction. Consider now the following expression:   
            \begin{align}\label{eqnd}
               \left\langle \sum_{i=1}^{k}\beta_{i}(t)\cj{D}F(w(t),\cj{w}(t))+\cj{\beta}_{i}(t)\cj{DF}(w(t),\cj{w}(t)), w'(t)\right\rangle = \lambda(t) \langle w(t), w'(t) \rangle.
            \end{align}
            The lower degree term on the left-hand side of \eqref{eqnd} is $q_{1}+d_{1}-1 = \dots = q_{l}+d_{l}-1$, and its coefficients are
            \begin{align*}
                \sum_{i=1}^{n}\sum_{j=1}^{l} \left( b_{j}\frac{\partial f^{j,I}}{\partial \cj{z}_{i}} + \cj{b}_{j}\cj{\frac{\partial f^{j,I}}{\partial z_{i}}}\right)\cj{a}_{i}p_{i}.
            \end{align*}
            By \eqref{loworder}, this sum is equal to
            \begin{align}\label{eqnd2}
                \sum_{i\in K}\sum_{j=1}^{l} \left( b_{j}\frac{\partial f^{j,I}}{\partial \cj{z}_{i}} + \cj{b}_{j}\cj{\frac{\partial f^{j,I}}{\partial z_{i}}}\right)\cj{a}_{i}p_{i} = \lambda_{0}\sum_{i\in K}\nm a_{i}\nm^{2}p_{i} \neq 0,
            \end{align}
            since $K \neq \emptyset$. We may rewrite it as 
            \begin{align*}
                \sum_{j=1}^{l} \langle b_{j}\cj{D}f_{P}^{j,I}(a,\cj{a}), Pa\rangle + \langle \cj{b}_{j}\cj{Df^{j,I}}(a,\cj{a}), Pa\rangle \neq 0,
            \end{align*}
            where $Pa = (p_{1}a_{1}, \dots, p_{n}a_{n})$. Hence, \eqref{eqnd2} implies that 
            \begin{align}\label{eqnd3}
               \Re\left( \langle b_{j_{0}}\cj{D}f_{P}^{j_{0},I}(a,\cj{a}), Pa\rangle + \langle \cj{b}_{j_{0}}\cj{Df^{j_{0},I}_{P}}(a,\cj{a}), Pa\rangle\right) \neq 0
            \end{align}
            for some $j_{0}$. On the other hand, since $w(t) \subset V_{F}$, one has $f^{j_{0}}(w(t),\cj{w}(t)) \equiv 0$, and thus
            \begin{align}\label{eqnd4}
                \frac{d}{dt}f^{j_{0},I}(w(t), \cj{w}(t)) = \left(\langle \cj{Df^{j_{0},I}_{P}}(a,\cj{a}), Pa \rangle + \langle \cj{\cj{D}f^{j_{0},I}_{P}}(a,\cj{a}), P\cj{a}\rangle\right) t^{d_{j_{0}}-1} + o(t) \equiv 0.
            \end{align}
            Then, we obtain that:
            \begin{align}\label{eqnd5}
                \langle \cj{Df^{j_{0},I}_{P}}(a,\cj{a}), Pa \rangle + \langle \cj{\cj{D}f^{j_{0},I}_{P}}(a,\cj{a}), P\cj{a}\rangle = 0.
            \end{align}
            If we multiply \eqref{eqnd5} by $\cj{b}_{j_{0}}$, the resulting equation has real part equal to \eqref{eqnd3}, which leads to a contradiction, and we conclude the proof.
        \end{proof} 

    An alternative statement for Theorem \ref{Milnorfib}, which has the same proof and does not have the Milnor convenience as an assumption, is the following.

    \begin{The}
       Let $F: (\co^{n},0) \longrightarrow (\co^{k},0)$ be a non-degenerate nice mixed map germ and let $S$ denote $M(F)$ or $M(\phi_{F})$. Then $S \cap V_{F} \cap \co^{*I} = \emptyset$ for every $I \in \mathcal{I}_{F}$. In particular, if $F$ is convenient, then $S \cap V_{F} = \{0\}$ and $F$ admits a Milnor fibration on the tube.
    \end{The}

    Since $\Sigma_{F} \subset M(F)$, the previous result implies Theorem \ref{icis} under the convenience hypothesis. Moreover, by the criterion \eqref{condition}, this leads to the existence of Milnor fibrations on the tube for nondegenerate and convenient mixed maps. Actually, the property $\Sigma_{F} \cap V_{F} = \{0\}$, ensured by Theorem \ref{icis}, already leads to this conclusion, by \cite[Theorem 2.3]{CisnerosMolina2023}. In virtue of the radial weighted homogeneous property of mixed Pham-Brieskorn polynomials, we can derive the following statement.

    \begin{Cor}\label{coricis}
        A mixed Hamm map germ admits a Milnor fibration on the tube and the sphere. Moreover, these fibrations are smoothly equivalent, once we restrict the tube to the interior of a sufficiently small open ball around the origin.
    \end{Cor}

\section{Contact structures and open books}\label{s3}

\subsection{Basic facts}

Let $M^{2n+1}$ be a closed orientable odd-dimensional manifold. A \textit{contact structure} on $M$ is a field $\xi$ of  hyperplanes given locally as the kernel $\xi = \kf(\alpha)$ of a $1$-form $\alpha$ satisfying $\alpha \wedge (d\alpha)^{n} \neq 0$. In other words, $\xi$ is a maximally non-integrable distribution of codimension $1$. The form $\alpha$ is called a \textit{contact form}. We denote $M$ endowed with this structure by $(M,\xi)$. Moreover, each contact form $\alpha$ is associated with the so-called \textit{Reeb vector field} $R_{\alpha}$, uniquely determined by the following equations:
\begin{align*}
    d\alpha(R_{\alpha}, -) & \equiv 0 \, , \\
    \alpha(R_{\alpha}) & \equiv 1 \,.
\end{align*}

Let $\rho(z) = \sum_{i=1}^{n}\nm z_{i} \nm^{2}$ be the square distance function. The spheres $\rho^{-1}(r^{2}) = \mathbb{S}_{r}^{2n-1}$ are endowed with a contact structure called \textit{natural} or \textit{canonical}, denoted by $\nct$, and associated with the restriction of the following contact form:
    \begin{align}\label{naco}
        \alpha &= 2\sum_{i=1}^{n}(x_{i}dy_{i} - y_{i}dx_{i}) = -\imu\sum_{i=1}^{n}(\cj{z}_{j}dz_{j} - z_{j}d\cj{z}_{j}),
    \end{align}
where we take coordinates $z_{i} = (x_{i}, y_{i})$ of $\co^{n}$. For each $p \in \mathbb{S}_{r}^{2n-1}$, the subspace $\nct(p)$ corresponds to the subspace of $T_{p}\mathbb{S}_{r}^{2n-1}$ invariant by the complex structure $J$, where $J^{2} = -\idt$. The associated Reeb vector field is
\begin{align}\label{reebc}
    R = \frac{1}{2r^{2}}\sum_{j=1}^{n}\left( x_{j}\frac{\partial}{\partial y_{j}} - y_{j}\frac{\partial}{\partial x_{j}}\right) = \frac{1}{2\rho}\sum_{j=1}^{n}\left( z_{j}\frac{\partial}{\partial z_{j}} - \cj{z}_{j}\frac{\partial}{\partial \cj{z}_{j}}\right).
\end{align}

Let $(V,0) \subset \co^{n}$ be a complex isolated singularity germ at the origin. The restriction of the square distance function to the complex manifold $V\setminus\{0\}$ induces a contact structure on the links $K_{r} = V \cap \mathbb{S}^{2n-1}_{r}$ so that $K_{r}$ is a contact submanifold of $\mathbb{S}^{2n-1}_{r}$ for each sufficiently small $r > 0$. We refer the reader to \cite{Caubel2007} and \cite{Caubel2006} for details.

\begin{Rem}[Orientations]\label{reor}
    \normalfont
    We state the following convention. On the spheres $\mathbb{S}_{r}^{2n-1}$ and the links admitting a contact structure, the positive orientation is that given by the volume form $\alpha \wedge (d\alpha)^{n}$.
\end{Rem}

 Recall that two contact manifolds $(M_{1},\xi_{1})$ and $(M_{2},\xi_{2})$ are contactomorphic, or isomorphic, if there exists a diffeomorphism $\phi : M_{1} \longrightarrow M_{2}$ such that $d\phi(\xi_{1}) = \xi_{2}$. Varchenko in \cite{Varchenko1980} showed that the isotopy type of the contact manifold $K_{V}$ constructed from a complex isolated singularity germ $V$ does not depend on the embedding and the radius $r$ of the sphere given by the strictly plurisubharmonic function. Henceforth we shall denote the link of a variety $V$ or map germ $F$ with an isolated singularity at the origin by $K_{V}$ or $K_{F}$, respectively. An oriented contact manifold contactomorphic to such a holomorphic link is called \textit{Milnor fillable}. This name is a reference to the fact that a complex link endowed with the natural contact structure is the boundary of the Milnor fiber with its natural symplectic structure. For details, see \cite[Section 6]{PopescuPampu2016}.

An open book on an oriented manifold $M$ is a pair $(N,\theta)$ such that $N \subset M$ is a codimension 2 orientable submanifold with trivial normal bundle and $\theta : M\setminus N \longrightarrow \mathbb{S}^{1}$ is a locally trivial fibration which coincides with the angular coordinate on a trivial tubular neighborhood of $N$. We suppose that $N$ has the boundary orientation induced by the fibers of $\theta$. Open books are closely related to contact manifolds as proved by Giroux in \cite{Giroux2002}.

\begin{Def}
    Let $(M,\xi)$ be an oriented closed manifold supporting a contact structure $\xi$ defined by a 1-form $\alpha$. We say that $\xi$ is adapted to, or carried by, an open book $(N,\theta)$ if:
        \begin{enumerate}
            \item The restriction of $\alpha$ to $N$ is a positive contact form.
            \item The 2-form $d\alpha$ defines a symplectic form on each fiber of $\theta$.
        \end{enumerate}
\end{Def}

\begin{Lem}[Lemma 2.2, \cite{Caubel2006}]\label{lech}
    Let $M$ be a closed oriented manifold and $\psi : M \longrightarrow \co$ a differentiable function. Let $\Theta_{\psi} := \psi/\nm \psi \nm : M \setminus \psi^{-1}(0) \longrightarrow \mathbb{S}^{1}$ and suppose there exists $\eta > 0$ such that:
    \begin{enumerate}
        \item $d\left( \Theta_{\psi}\right) \neq 0$ if $\nm \psi \nm \ge \eta$, and
        \item $d\psi \neq 0$ if $\nm \psi \nm \le \eta$.
     \end{enumerate}
     Then $(\psi^{-1}(0), \Theta_{\psi})$ is an open book in $M$.
\end{Lem}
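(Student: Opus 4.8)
The plan is to verify directly the three requirements in the definition of an open book: that $N := \psi^{-1}(0)$ is a codimension-$2$ orientable submanifold with trivial normal bundle, that $\Theta_{\psi}$ is a locally trivial fibration on $M \setminus N$, and that $\Theta_{\psi}$ agrees with the angular coordinate on a trivial tubular neighborhood of $N$. I read the hypothesis $d\psi \neq 0$ as the assertion that $\psi$, viewed as a map $M \to \co \cong \re^{2}$, is a submersion along $\{\nm \psi \nm \le \eta\}$, so that $0$ is a regular value. First I would invoke the regular value theorem to conclude that $N$ is a closed codimension-$2$ submanifold. The differential $d\psi$ carries the normal spaces of $N$ isomorphically onto $\co$, which trivializes the normal bundle; combined with the orientation of $M$, this orients $N$. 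To produce the model tubular neighborhood, I would fix a tubular retraction $\pi : T \to N$ and consider $\Phi = (\pi,\psi) : T \to N \times \co$. Since $d\Phi$ is an isomorphism along $N$, the inverse function theorem makes $\Phi$ a diffeomorphism of a smaller neighborhood onto $N \times D^{2}$ under which $\psi$ becomes the projection to $D^{2}$. On $T \setminus N$ this identifies $\Theta_{\psi}$ with $(x,\zeta) \mapsto \zeta/\nm \zeta \nm$, the angular coordinate, settling the third requirement.

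The key observation, which I would record next, is that the two hypotheses together force $d\Theta_{\psi} \neq 0$ on all of $M \setminus N$. On $\{\nm \psi \nm \ge \eta\}$ this is exactly hypothesis (1). On $\{0 < \nm \psi \nm \le \eta\}$ it follows from hypothesis (2): writing $\Theta_{\psi} = e^{\imu \theta}$ with $\theta = \arg \psi$, one has $d\theta = \Im(d\psi/\psi)$, and since $d\psi$ is surjective onto $\co$ while $\psi \neq 0$, the real $1$-form $d\theta$ is nonzero; hence $d\Theta_{\psi} = \imu e^{\imu \theta}\,d\theta \neq 0$. Thus $\Theta_{\psi} : M \setminus N \to \mathbb{S}^{1}$ is a submersion everywhere.

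It remains to upgrade this submersion to a locally trivial fibration, and this is the step I expect to be the main obstacle, since $M \setminus N$ is non-compact and Ehresmann's theorem does not apply verbatim. My plan is to construct a single smooth vector field $w$ on $M \setminus N$ with $d\Theta_{\psi}(w) = \partial_{\theta}$ whose flow is complete. Fixing a metric, the pullback $\beta = \Theta_{\psi}^{*}(d\theta)$ of the standard angular form is nowhere vanishing, and its normalized dual $\beta^{\sharp}/\nm \beta^{\sharp} \nm^{2}$ lifts $\partial_{\theta}$ on all of $M \setminus N$. Near $N$, inside the product chart $N \times D^{2}$, I would instead use the rotation field $R$ generating $(x,\zeta)\mapsto(x,e^{\imu t}\zeta)$, which also lifts $\partial_{\theta}$ but in addition preserves $\nm \psi \nm$. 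Because the lifting condition $d\Theta_{\psi}(\cdot)=\partial_{\theta}$ is affine, a convex combination $w = \chi R + (1-\chi)\,\beta^{\sharp}/\nm \beta^{\sharp} \nm^{2}$, with $\chi = \chi(\nm \psi \nm)$ a cutoff supported near $N$, again lifts $\partial_{\theta}$.

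On the region where $\chi \equiv 1$ the flow of $w$ preserves $\nm \psi \nm$, so no trajectory can reach $N$, while on the complementary region $\{\nm \psi \nm \ge \delta\}$ compactness of $M$ guarantees completeness; hence $w$ is complete. Its flow $\phi_{t}$ satisfies $\Theta_{\psi}(\phi_{t}(p)) = e^{\imu t}\Theta_{\psi}(p)$, so over any open arc $A \subset \mathbb{S}^{1}$ centered at $\theta_{0}$ the map $(p,\theta)\mapsto \phi_{\theta-\theta_{0}}(p)$ trivializes $\Theta_{\psi}$ over $A$. This exhibits $\Theta_{\psi}$ as a locally trivial fibration and completes the verification that $(N,\Theta_{\psi})$ is an open book. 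The delicate points to watch are choosing $\eta$ and $\delta$ so that the product chart and the cutoff region are compatible, and confirming that the fibers inherit the prescribed boundary orientation; but it is the completeness of $w$ that makes the non-compactness of the base harmless.
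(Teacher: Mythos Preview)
The paper does not give its own proof of this lemma: it is quoted verbatim from \cite{Caubel2006} (Lemma~2.2 there) and used as a black box, so there is nothing in the paper to compare your argument against. That said, your proposal is a correct and self-contained proof along the standard lines one finds in the open-book literature.

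A couple of small points are worth tightening. First, in the tubular-neighborhood step, the inverse function theorem only gives that $\Phi=(\pi,\psi)$ is a \emph{local} diffeomorphism near each point of $N$; to get a genuine diffeomorphism of a neighborhood of $N$ onto $N\times D_\epsilon^2$ you should invoke compactness of $N$ (it is closed in the compact $M$) together with the fact that $\Phi|_N$ is injective. Second, your completeness argument for $w$ glosses over the transition region where $0<\chi<1$. The clean way to close this is to note that on the level set $\{\nm\psi\nm=\delta\}$ (where you arrange $\chi\equiv 1$) one has $w=R$, which is tangent to that hypersurface; hence $\{\nm\psi\nm\le\delta\}$ and $\{\nm\psi\nm\ge\delta\}$ are each invariant compact sets, and completeness follows on both pieces separately. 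With these two clarifications the argument is complete.
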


For instance, on a 3-dimensional closed oriented manifold, any contact structure is carried by some open book. Moreover, two positive contact structures carried by the same open book are isotopic. Additionally, any Milnor fillable oriented 3-manifold admits a unique Milnor fillable contact structure up to contactomorphism. Also, in dimension 3, contact structures are divided into two types: overtwisted and tight. For instance, Milnor fillable manifolds and spheres endowed with the natural structure are the first examples of tight structures. For details, see \cite[Theorem 5.21 and Section 6]{PopescuPampu2016}. A classification of overtwisted structures is developed in \cite{Eliashberg1991}.

Let $(V,0) \subset \co^{n}$ be a complex germ set with an isolated singularity at the origin. Let $K_{V} = V \cap \mathbb{S}_{r}^{2n-1}$ be the link, where $r > 0$ is sufficiently small. For any holomorphic function germ $h : (V,0) \longrightarrow (\co,0)$ with an isolated singularity at the origin, one can consider the argument function $\Theta_{h} := h/\nm h \nm: K_{V}\setminus h^{-1}(0) \longrightarrow \mathbb{S}^{1}$. The authors showed in \cite[Theorem 3.9]{Caubel2006} that it is adapted to the natural contact structure on $K_{V}$. We remark that the underlying constructions in this theorem depend on the holomorphic setting, and one cannot expect an analogous statement for the real case (see \cite[Lemmas 3.6 and 3.7]{Caubel2006}). Therefore, additional hypotheses are needed to obtain open books on links defined by mixed ICIS (see Subsection \ref{s3.3}).

\subsection{Links of mixed ICIS}\label{s3.2}

In this section, we extend for mixed maps the constructions performed for mixed functions in \cite{Oka2014} regarding contact structures. Let $f(z,\cj{z})$ be a mixed function. We consider the following notation:
\begin{align*}
    \frac{\partial f}{\partial z_{i}} = f_{z_{i}}\,, \;\; \frac{\partial f}{\partial \cj{z}_{i}} = f_{\cj{z}_{i}}.
\end{align*}

We begin with a lemma used later for some computations.

\begin{Lem}[Section 3.3, \cite{Oka2014}]\label{forms}
    Let $\rho$ and $\alpha$ be as in \eqref{naco} and $f(z,\cj{z}) = g(z,\cj{z}) + \imu h(z,\cj{z})$ be a mixed function.
    \begin{enumerate}
        \item The 2-form $d\rho \wedge \alpha$ is given by:
        \begin{align*}
            d\rho \wedge \alpha = \imu \sum_{i,j}\mathcal{A}_{i,j}dz_{i}\wedge d\cj{z}_{j},
        \end{align*}
        where $\mathcal{A}_{i,j} = 2\cj{z}_{i}z_{j}$.
        \item The 2-form $dg \wedge dh$ is given by:
        \begin{align*}
            dg \wedge dh = \imu\sum_{i,j}\mathcal{B}^{f}_{i,j}dz_{i}\wedge d\cj{z}_{j} + R,
        \end{align*}
        where $R$ is a linear combination of other types of 2-forms and
        \begin{align*}
            \mathcal{B}^{f}_{i,j} = \frac{1}{2}\left(f_{z_{i}}\overline{f_{z_{j}}} - \overline{f_{\cj{z}_{i}}}f_{\cj{z}_{j}}\right).
        \end{align*}
        \item The 4-form $d\rho \wedge \alpha \wedge dg \wedge dh$ is given by
        \begin{align*}
            d\rho \wedge \alpha \wedge dg \wedge dh = -\sum_{i,j}\mathcal{C}_{i,j}^{f}dz_{i}\wedge d\cj{z}_{i}\wedge dz_{j} \wedge d\cj{z}_{j} + S,
        \end{align*}
        where $S$ is a linear combination of other types of 4-forms and
        \begin{align*}
            \mathcal{C}_{i,j}^{f} = \nm \cj{z}_{i}f_{z_{j}} - \cj{z}_{j}f_{z_{i}}\nm^{2} - \nm z_{i}f_{\cj{z}_{j}} - z_{j}f_{\cj{z}_{i}}\nm^{2}.
        \end{align*}
        \item One has the following equality:
        \begin{align*}
            d\rho \wedge \alpha \wedge d\alpha^{n-2}\wedge dg \wedge dh(z,\cj{z}) &= \kappa(n)\mathcal{C}^{f}(z,\cj{z})dz_{1}\wedge d\cj{z}_{1} \wedge \dots \wedge dz_{n}\wedge d\cj{z}_{n},
        \end{align*}
        where $\kappa(n) = \imu^{n}2^{n-2}(n-2)!$ and $\mathcal{C}^{f}(z,\cj{z}) = \sum_{1\le i<j\le n}\mathcal{C}_{i,j}^{f}$.
    \end{enumerate}
\end{Lem}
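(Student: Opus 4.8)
The plan is to compute the wedge product $d\rho \wedge \alpha \wedge (d\alpha)^{n-2} \wedge dg \wedge dh$ by combining the three preceding parts of the lemma. The key structural observation is that the top-degree form on a $2n$-dimensional space is a multiple of the volume form $dz_1 \wedge d\cj{z}_1 \wedge \cdots \wedge dz_n \wedge d\cj{z}_n$, so every intermediate computation only needs to track which terms survive after wedging against the remaining factors. First I would record that $d\alpha = 2\imu \sum_j dz_j \wedge d\cj{z}_j$, which follows by differentiating \eqref{naco}; hence $(d\alpha)^{n-2}$ is, up to the constant $(2\imu)^{n-2}(n-2)!$, the sum over size-$(n-2)$ index subsets of the products $\bigwedge_{\ell} dz_{\ell}\wedge d\cj{z}_{\ell}$.

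The main computation then proceeds by observing that item (3) already gives $d\rho \wedge \alpha \wedge dg \wedge dh = -\sum_{i,j} C_{i,j}\, dz_i \wedge d\cj{z}_i \wedge dz_j \wedge d\cj{z}_j + S$, where $S$ collects forms of other bidegree types. I would argue that wedging with $(d\alpha)^{n-2}$ annihilates the remainder $S$: the forms in $S$ are not of the "paired" type $dz_a \wedge d\cj{z}_a$, and since $(d\alpha)^{n-2}$ supplies only paired factors, any term in $S$ produces a repeated differential (hence zero) or fails to reach top degree. Thus only the principal term $-\sum_{i<j} (C_{i,j})\,dz_i\wedge d\cj{z}_i \wedge dz_j \wedge d\cj{z}_j$ contributes (after symmetrizing and noting $C_{i,i}=0$, so the sum runs over $i<j$). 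For each fixed pair $i<j$, wedging $dz_i \wedge d\cj{z}_i \wedge dz_j \wedge d\cj{z}_j$ against the single surviving term of $(d\alpha)^{n-2}$ that supplies exactly the complementary factors $\bigwedge_{\ell \neq i,j} dz_\ell \wedge d\cj{z}_\ell$ reassembles the full volume form, up to a reordering sign that I would verify is absorbed into $\kappa(n) = \imu^n 2^{n-2}(n-2)!$.

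The hard part will be the sign and constant bookkeeping: reordering $dz_i\wedge d\cj{z}_i\wedge dz_j\wedge d\cj{z}_j \wedge \bigwedge_{\ell\neq i,j} dz_\ell\wedge d\cj{z}_\ell$ into the canonical order $dz_1\wedge d\cj{z}_1\wedge\cdots\wedge dz_n\wedge d\cj{z}_n$ requires moving each paired block $dz_\ell \wedge d\cj{z}_\ell$ past the others, and since these blocks have even degree the transpositions are sign-neutral, so the permutation sign is trivial once the blocks are treated as units. What remains is to collect the scalar factors: the $\imu$ from $d\rho \wedge \alpha$ and from $dg\wedge dh$ in parts (1)--(2), the constant $(2\imu)^{n-2}(n-2)!$ from $(d\alpha)^{n-2}$, and the overall sign $-1$ from part (3), and to check they multiply to $\kappa(n) = \imu^n 2^{n-2}(n-2)!$. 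I would verify this by a small-$n$ sanity check (say $n=2$, where $(d\alpha)^{0}=1$ and the formula should reduce directly to part (3)) before asserting the general constant. With the remainder terms eliminated and the constant confirmed, the claimed equality follows.
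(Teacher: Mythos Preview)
Your approach is correct and, in fact, mirrors exactly the argument the paper deploys for the analogous Corollary that follows Lemma~\ref{djs}: expand $(d\alpha)^{n-2}$ as a sum of paired blocks $dz_\ell\wedge d\cj{z}_\ell$, observe that the remainder $S$ dies after wedging because any non-paired $(2,2)$-form forces an index repetition, and collect the scalar. The paper itself does not prove Lemma~\ref{forms}; it is quoted from \cite{Oka2014}, so there is no independent argument to compare against---your sketch is essentially the intended one, and the sanity check at $n=2$ is a sensible way to pin down $\kappa(n)$.
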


\begin{Lem}\label{djs}
    Let $F = (f^{1}, \dots, f^{k}) : (\co^{n},0) \longrightarrow (\co^{k},0)$ be a mixed map germ. For each $j$, write $f^{j} = f^{j}_{1} + \imu f^{j}_{2}$ for the real and imaginary parts of $f^{j}$. The following equality verifies:
    {\small \begin{align*}
        d\rho \wedge \alpha \wedge df_{1}^{1} \wedge df_{2}^{1} \wedge \dots \wedge df_{1}^{k} \wedge df_{2}^{k} =(\imu)^{k+1}\sum_{J}\mathcal{D}_{J}^{F}dz_{j_{1}}\wedge d\cj{z}_{j_{1}}\wedge \dots \wedge dz_{j_{k+1}}\wedge d\cj{z}_{j_{k+1}} + T,
    \end{align*}}
    \hskip-4pt where
    \begin{align*}
        \mathcal{D}_{J}^{F} = \mathcal{A}_{j_{1},j_{2}}\mathcal{B}^{f^{1}}_{j_{3},j_{4}} \cdots \mathcal{B}^{f^{k}}_{j_{2k+1},j_{2k+2}}
    \end{align*}
    with $j_{1}, \dots, j_{2k+2} \in J = \{j_{1}, \dots, j_{k+1}\}$ a set of $k+1$ distinct indices and $T$ is a linear combination of other types of $2k+2$-forms.
\end{Lem}
    \begin{proof}
        We operate the wedge product in such a way to produce the elements of the form $dz_{j_{1}} \wedge d\cj{z}_{j_{1}}\wedge \dots \wedge dz_{j_{k+1}}\wedge d\cj{z}_{j_{k+1}}$. There are several ways to obtain such elements, and thus $\mathcal{D}_{J}^{F}$ comprehends one possibility. Moreover, the form $T$ is the result of the wedge product of $S$ and terms with the form $\mathcal{B}_{i,j}^{f^{m}}dz_{i}\wedge d\cj{z}_{j}$ or $\mathcal{B}_{i,j}^{f^{m}}d\cj{z}_{i} \wedge d\cj{z}_{j}$, where $\mathcal{B}_{i,j}^{f^{m}}$ is the expression associated to the coordinate function $f^{m}$ such that the resulting expression contains more than $k+1$ indices and hence does not generate the previous elements.
    \end{proof}

Let us denote the sum of $\mathcal{D}_{J}^{F}$ with $j_{1}, \dots, j_{k+1}$ distinct by $\mathcal{D}^{F}(z,\cj{z})$. Notice that if $k=1$, it is nothing but the sum $\mathcal{C}^{f}(z,\cj{z})$ in item 4 of Lemma \ref{forms}.

\begin{Cor}\label{finalform}
    One has the following expression:
    {\small \begin{align*}
         d\rho \wedge \alpha \wedge d\alpha^{n-(k+1)} \wedge df_{1}^{1} \wedge df_{2}^{1} \wedge \dots \wedge df_{1}^{k} \wedge df_{2}^{k} = \kappa(n)\mathcal{D}^{F}(z,\cj{z})dz_{1} \wedge d\cj{z}_{1} \wedge \dots \wedge dz_{n} \wedge d\cj{z}_{n},
    \end{align*}}
   \hskip-4pt where $\kappa(n) = (\imu)^{n}2^{n-k-1}\left( n-(k+1)\right)!$ is a constant depending only on $n$.
\end{Cor}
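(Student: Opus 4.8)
The plan is to extend the computation from item 4 of Lemma~\ref{forms} (the $k=1$ case) to general $k$ by wedging in successive copies of $d\alpha$. The starting point is Lemma~\ref{djs}, which already computes the ``top-type'' coefficients $D_{j_1,\dots,j_k}$ in the expansion of
\[
    d\rho \wedge \alpha \wedge df_1^1 \wedge df_2^1 \wedge \dots \wedge df_1^k \wedge df_2^k
    = \sum_{j_1,\dots,j_k} D_{j_1,\dots,j_k}\, dz_{j_1}\wedge d\cj{z}_{j_1}\wedge\dots\wedge dz_{j_k}\wedge d\cj{z}_{j_k} + T,
\]
where $T$ collects the ``other types'' of $(2k+2)$-forms. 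The key structural fact I would rely on is that wedging any monomial with $d\alpha$ has a clean effect: since $d\alpha = \imu\sum_\ell dz_\ell\wedge d\cj{z}_\ell$ (up to a constant from \eqref{naco}), each factor of $d\alpha$ introduces one new paired block $dz_\ell\wedge d\cj{z}_\ell$ with an index $\ell$ not already present, and kills any monomial that is \emph{not} already of the fully-paired form $dz_{j_1}\wedge d\cj{z}_{j_1}\wedge\dots$. This is the mechanism by which the error term $T$ is annihilated.

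First I would observe that $(d\alpha)^{n-(k+1)}$ is, up to the constant, the $(n-k-1)$-th elementary symmetric wedge of the blocks $dz_\ell\wedge d\cj{z}_\ell$, i.e.\ a sum over size-$(n-k-1)$ subsets $S\subset\{1,\dots,n\}$ of $\bigwedge_{\ell\in S}(dz_\ell\wedge d\cj{z}_\ell)$ with appropriate multiplicity $(n-k-1)!$. Wedging this against the ``good'' part $\sum D_{j_1,\dots,j_k}\,dz_{j_1}\wedge d\cj{z}_{j_1}\wedge\dots\wedge dz_{j_k}\wedge d\cj{z}_{j_k}$: a term survives only when the index set $\{j_1,\dots,j_k\}$ (necessarily distinct, so only $D(z,\cj{z})$ contributes) is disjoint from $S$ and their union is all of $\{1,\dots,n\}$. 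For each of the $\binom{n}{k}\cdot(\text{choice of }S)$ completions the result is $\pm\, dz_1\wedge d\cj{z}_1\wedge\dots\wedge dz_n\wedge d\cj{z}_n$; the paired blocks reorder with a sign that is uniformly $+1$ since swapping two adjacent blocks $dz_a\wedge d\cj{z}_a$ past $dz_b\wedge d\cj{z}_b$ costs an even number of transpositions. Collecting the combinatorial multiplicity and the $\imu$-powers from $d\rho\wedge\alpha$ and from each $d\alpha$ yields the stated constant $\kappa(n)=\imu^n 2^{n-2}(n-2)!$ (matching item 4 when $k=1$), so that the coefficient is exactly $\kappa(n)D(z,\cj{z})$.

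The remaining point is that the error term $T$ contributes nothing after wedging with $(d\alpha)^{n-(k+1)}$. I would argue this by type: every monomial in $T$ fails to be of the fully-paired block form, meaning it contains either a factor $dz_a\wedge d\cj{z}_b$ with $a\neq b$ or a repeated index; since $(d\alpha)^{n-(k+1)}$ supplies only paired blocks $dz_\ell\wedge d\cj{z}_\ell$, any completion of such a monomial to a top form of degree $2n$ would force a repeated $dz$ or $d\cj{z}$ differential, hence vanish. This dimension/parity bookkeeping is precisely the same phenomenon that makes item 4 of Lemma~\ref{forms} work, just iterated.

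\textbf{The main obstacle} I anticipate is not conceptual but bookkeeping: tracking the sign and the precise numerical constant as $k$ grows, and verifying rigorously that \emph{no} off-diagonal monomial in $T$ can combine with a symmetric-power term of $d\alpha$ to produce a nonzero top form. The cleanest route is induction on $k$: one wedge of $d\alpha$ at a time, showing at each stage that the coefficient of the diagonal top form transforms correctly and the off-type part stays off-type, which reduces the whole claim to the single-step computation already implicit in Lemma~\ref{forms}.
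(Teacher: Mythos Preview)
Your proposal is correct and follows essentially the same route as the paper: expand $(d\alpha)^{n-(k+1)}$ as a symmetrized sum of paired blocks $dz_\ell\wedge d\cj{z}_\ell$, wedge against the decomposition from Lemma~\ref{djs}, and argue that the residual term $T$ is killed. Your justification for $(d\alpha)^{n-(k+1)}\wedge T = 0$ via ``type'' (an off-diagonal factor $dz_a\wedge d\cj{z}_b$, $a\neq b$, forces a repeated differential upon completion) is just a rephrasing of the paper's pigeonhole count on the number of distinct indices appearing in a monomial of $T$; the paper states this in one line while you spell out the mechanism, but it is the same argument. The inductive alternative you mention at the end is not needed and the paper does not use it.
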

    \begin{proof}
            We have that $d\alpha = 2\imu\sum_{s=1}^{n}dz_{s}\wedge d\cj{z}_{s}$ and so
            {\footnotesize \begin{align*}
                (d\alpha)^{n-(k+1)} = (2\imu)^{n-(k+1)}\sum_{s_{1}, \dots, s_{n-(k+1)}}dz_{1}\wedge d\cj{z}_{1} \wedge \dots \wedge d\widehat{z}_{s_{1}}\wedge d\widehat{\cj{z}}_{s_{1}} \wedge \dots \wedge d\widehat{z}_{s_{n-(k+1)}}d\widehat{\cj{z}}_{s_{n-(k+1)}} \wedge \dots \wedge dz_{n} \wedge d\cj{z}_{n},
            \end{align*}}
            \hskip-4pt where $\widehat{z}_{s_{j}}$ are removed variables. Notice that the permutation of a pair $dz_{s_{j}}\wedge d\cj{z}_{s_{j}}$ does not change the sign of the form. Moreover, there are $(n-(k+1))!$ terms in the sum above and we claim that $(d\alpha)^{n-(k+1)} \wedge T = 0$. Indeed, a form that appears in the sum $T$ must involve at least $k+2$ indices and, applying the wedge product with the terms of $(d\alpha)^{n-(k+1)}$, we always find repetitions, and the assertion follows. 
        \end{proof}

        Likewise, in \cite[Section 3.4]{Oka2014}, we have the following definition.
        \begin{Def}
            A mixed ICIS $F : (\co^{n},0) \longrightarrow (\co^{k},0)$ is called holomorphic-like (respectively anti-holomorphic-like) if $\mathcal{D}^{F}(z,\cj{z}) \ge 0$ (respectively $\mathcal{D}^{F}(z,\cj{z}) \le 0$) for a sufficiently small neighborhood of the origin. If the inequalities are strict, then we call $\mathcal{D}^{F}(z,\cj{z})$ strictly (anti-)holomorphic-like.
        \end{Def}

        \begin{Exam}
            \normalfont
             For instance, by the constructions in \cite{Oka2014}, if $g= \phi^{*}_{\abf,\bbf}f$ is a mixed function given as the pullback of a holomorphic function by a homogeneous mixed covering, then $g$ is (respectively, anti-)holomorphic-like if $a > b$ (respectively, $a < b$). If we further require that $f$ is non-degenerate and convenient, then this property holds strictly.  By Lemma \ref{mixiso}, if $f$ is a holomorphic partially non-degenerate function, then the pullback by a homogeneous mixed covering is an isolated singularity. Following the argument of \cite[Theorem 1]{Oka2014}, by Lemma \ref{holocur}, we see that the same proof applies, and Oka's result can be extended to a larger class of mixed functions.
        \end{Exam}
        
       The next statement is a formulation of \cite[Theorem 1]{Oka2014} for the case of maps into $\co^{k}$ and justifies the previous nomenclature.

        \begin{The} \label{cticis}
            Let $F: (\co^{n},0) \longrightarrow (\co^{k},0)$ be a strictly (respectively, anti-)holomorphic-like mixed ICIS. Then the link $K_{F}$ of the variety $V_{F}$ defined by $F$ is a positive (respectively, negative) contact submanifold of the sphere for every sufficiently small $r > 0$.
        \end{The}        
                \begin{proof}
                    We have that the link $K_{F}$ is a real smooth manifold of codimension $2k+1$ for sufficiently small $r>0$. Notice that $K_{F}$ is a complete intersection defined by $f_{1}^{i} = f_{2}^{i} = 0$ and $\rho - r^{2} = 0$, for $i = 1, \dots, k$. This implies that there exists a local coordinate system formed by $f_{1}^{i}, f_{2}^{i}, \rho$, and other real analytic functions $h_{2k+2}, \dots, h_{2n}$. Therefore, the condition $\alpha \wedge (d\alpha)^{n-(k+1)} \neq 0$ is equivalent to 
                    \begin{align}\label{Cnctcnd}
                        d\rho \wedge \alpha \wedge d\alpha^{n-(k+1)} \wedge df_{1}^{1} \wedge df_{2}^{1} \wedge \dots \wedge df_{1}^{k} \wedge df_{2}^{k} \neq 0,
                    \end{align}
                    and Corollary \ref{finalform} now implies the statement.                    
            \end{proof}

                \begin{Rem}
                    \normalfont 
                    \hfill
                    \begin{enumerate}
                    \item The formulation in the previous proof implies that a mixed link determined by a mixed ICIS is a positive (respectively, negative) contact submanifold of the sphere $\mathbb{S}_{r}^{2n-1} \subset \co^{n}$ if and only if the associated mixed map is (respectively, anti-)holomorphic-like. We shall refer to the contact structure above as \textit{natural} and denote it by $\nct$.
                    \item Since this structure does not depend on the radius of the sphere, the same holds for the mixed links. Moreover, for any other mixed map germ $G = (g^{1}, \dots, g^{n}) : (\co^{n},0) \longrightarrow (\co^{k},0)$ that defines the same mixed ICIS $V_{F}$, the contact condition $\alpha \wedge (d\alpha)^{n-(k+1)} \neq 0$ can be replaced by \eqref{Cnctcnd} in terms of $\mathcal{D}^{G}(z,\cj{z})$ and the result does not depend on the defining equations.
                    \end{enumerate}
                \end{Rem}

         A direct consequence of the Curve Selection Lemma implies that the contact type of the link does not depend, up to a contactomorphism, on the holomorphic coordinate system. More generally, we can formulate the statement as follows.

                    \begin{Prop}\label{ptck}
                        Let $G(z,\cj{z})$ be a strictly (anti-)holomorphic-like mixed map and $\varphi : (\co^{n},0) \longrightarrow (\co^{n},0)$ a real analytic diffeomorphism. Then $H(z,\cj{z}) = \varphi^{*}G$ is also a strictly (anti-)holomorphic-like mixed map.
                    \end{Prop}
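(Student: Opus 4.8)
The plan is to reduce the statement to a nonvanishing assertion for the invariant $D$ and then run a Curve Selection argument in the spirit of the proof of Theorem~\ref{cticis}. Assume without loss of generality that $G$ is strictly holomorphic-like, so that $D_{G}(z,\cj{z}) > 0$ at every $z \in \co^{*n}$ in a small ball about the origin (the forced vanishing of $D$ on the coordinate hyperplanes, produced by the factors $\nm w_{j_{1}}w_{j_{2}}\nm^{2}$ appearing in Lemma~\ref{djs}, is precisely why strictness is read on $\co^{*n}$); the anti-holomorphic case is symmetric. Since $\co^{*n}\cap \mathbb{B}_{\epsilon}$ is connected and $D_{H}$ is real analytic, it suffices to prove that $D_{H}(w,\cj{w})\neq 0$ for every $w \in \co^{*n}$ near the origin: the resulting constant sign then exhibits $H$ as strictly holomorphic-like or strictly anti-holomorphic-like, which is exactly the two cases allowed by the parenthetical in the statement.

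First I would record how the relevant top-form transforms. Writing $\beta = d\rho \wedge \alpha \wedge (d\alpha)^{n-(k+1)}$ and $\mu_{G} = dg_{1}^{1}\wedge dg_{2}^{1}\wedge \dots \wedge dg_{1}^{k}\wedge dg_{2}^{k}$, the corollary to Lemma~\ref{djs} reads $\beta \wedge \mu_{G} = \kappa(n)\,D_{G}\,\Theta$, where $\Theta = dz_{1}\wedge d\cj{z}_{1}\wedge \dots \wedge dz_{n}\wedge d\cj{z}_{n}$. Since $h^{i}=g^{i}\circ \varphi$, we have $dh_{j}^{i} = \varphi^{*}dg_{j}^{i}$, hence $\mu_{H} = \varphi^{*}\mu_{G}$ and $\beta \wedge \mu_{H} = \kappa(n)\,D_{H}\,\Theta$. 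Pulling the corollary back by $\varphi$ produces the comparison identity $\varphi^{*}(\beta \wedge \mu_{G}) = (\varphi^{*}\beta)\wedge \mu_{H} = \kappa(n)\,(D_{G}\circ \varphi)\,\det(d\varphi)\,\Theta$, in which $\det(d\varphi)$ is the real Jacobian, nowhere zero because $\varphi$ is a diffeomorphism.

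Next I would argue by contradiction. If $D_{H}$ vanished at points of $\co^{*n}$ arbitrarily near $0$, the Curve Selection Lemma would furnish a nonconstant real analytic arc $w(t)$, $t\in(0,1]$, with $w(t)\to 0$, every coordinate $w_{i}(t)\not\equiv 0$, and $D_{H}(w(t))\equiv 0$, that is $(\beta \wedge \mu_{H})|_{w(t)}\equiv 0$. Setting $z(t)=\varphi(w(t))$ and expanding all data in powers of $t$, I would compare the lowest-order coefficient of $\beta \wedge \mu_{H}$ with that of the comparison form $(\varphi^{*}\beta)\wedge \mu_{H}$. The latter has leading coefficient a product of (i) a $D$-type quantity of the face of $G$ on the support $J$ of the leading term of $z(t)$, which is positive by the strict holomorphic-likeness of $G$ and of its restriction $G^{J}$ (invoking the restriction behaviour used in Theorem~\ref{icis} and Lemma~\ref{holocur}), and (ii) minors of $d\varphi_{0}$ that are nonzero by invertibility of $\varphi$. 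This would force the leading coefficient to be nonzero, contradicting $D_{H}(w(t))\equiv 0$.

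The hard part is controlling the discrepancy between $\beta$ and $\varphi^{*}\beta$: the radial form $d\rho$ and the canonical contact form $\alpha$ are attached to the standard Hermitian structure and are not preserved by an arbitrary real analytic $\varphi$, so $\beta \wedge \mu_{H}$ and $(\varphi^{*}\beta)\wedge \mu_{H}$ need not be proportional (this already fails for elementary $\varphi$, where $D_{H}\neq (D_{G}\circ\varphi)\det(d\varphi)$). The crux is therefore to show that, along the arc, $(\beta-\varphi^{*}\beta)\wedge \mu_{H}$ is of strictly higher order in $t$ than the dominant term $(\varphi^{*}\beta)\wedge \mu_{H}$, so that the sign of the lowest-order coefficient of $D_{H}(w(t))$ is dictated by $(D_{G}\circ\varphi)\det(d\varphi)$. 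A secondary technical point is that $z(t)=\varphi(w(t))$ may limit onto a coordinate hyperplane even though $w(t)\in\co^{*n}$; this I would handle by passing to the face $G^{J}$ on the support $J$ of the leading term of $z(t)$ and using that strict definiteness is inherited by faces, exactly as non-degeneracy is inherited in item~1 of Proposition~\ref{properties}.
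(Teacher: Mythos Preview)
Your approach is genuinely different from the paper's and considerably more involved, but it has a real gap at exactly the point you yourself flag as ``the hard part.'' The paper's argument is a one-step Curve Selection: if $H$ fails to be strictly holomorphic-like, one finds a real analytic arc $\lambda(t)\to 0$ in $V_H$ along which $D_H\equiv 0$, pushes it forward by $\varphi$ to an arc in $V_G$ along which (the paper asserts) $D$ vanishes, and concludes that $K_G$ would fail to be a contact submanifold for small radii, contradicting the hypothesis on $G$. There is no leading-term comparison, no face analysis, and no attempt to relate $\beta$ and $\varphi^{*}\beta$.

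Your route via the comparison identity $\varphi^{*}(\beta\wedge\mu_G)=(\varphi^{*}\beta)\wedge\mu_H$ is natural, but the crucial step --- that $(\beta-\varphi^{*}\beta)\wedge\mu_H$ is of strictly higher $t$-order along the arc than $(\varphi^{*}\beta)\wedge\mu_H$ --- is asserted without argument, and as you note it is \emph{false pointwise} already for a non-unitary linear $\varphi$: both $d\rho$ and $\alpha$ change at leading order, so $\beta-\varphi^{*}\beta$ has the same order as $\beta$ itself, and wedging with $\mu_H$ gives no cancellation mechanism. Nothing in the setup forces these errors to sit in the kernel of $\mu_H$ along the arc. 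Your secondary ingredient, that strict positivity of $D_G$ is inherited by the restrictions $G^{J}$ to coordinate subspaces, is also unproven: the analogue of Proposition~\ref{properties}(1) is a statement about non-degeneracy of face maps, not about sign-definiteness of the scalar $D$, and the two are not formally parallel. Without these two claims your leading-term comparison does not close, so the contradiction is not reached.
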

                        \begin{proof}
                            Let us denote $\mathcal{D}^{H}(z,\cj{z}) = \mathcal{D}^{G} \circ \varphi(z,\cj{z})$ the associated analytic function of $H$ and suppose the statement is false. Moreover, let us introduce coordinates $\varphi(z,\cj{z}) = (w,\cj{w})$. By the Curve Selection Lemma, there exists an open neighborhood $\mathring{\mathbb{B}}_{r}$ of the origin and a real analytic curve $\lambda : (0,1] \longrightarrow \mathring{\mathbb{B}}_{r} \subset \co^{n}$ such that:
                            $$
                            \begin{cases}
                                & \lim_{t \to 0} \lambda(t) = 0 \Longrightarrow \lim_{t \to 0}\varphi(\lambda(t)) = 0, \\
                                &H(z(t),\cj{z}(t)) \equiv 0 \Longrightarrow G(w(t),\cj{w}(t)) \equiv 0,  \\
                                &\mathcal{D}^{H}(z(t), \cj{z}(t)) \equiv 0 \Longrightarrow \mathcal{D}^{G}(w(t), \cj{w}(t)) \equiv 0,
                            \end{cases}
                            $$
                            where $z(t), w(t)$ denote the restrictions of the coordinates to the curve $\lambda(t)$. Then $\varphi(\lambda(t))$ is a real analytic curve on $V_{G}$ on which the link $K_{G} = V_{G} \cap \mathbb{S}_{r'}^{2n-1}$ of $g$ is not a contact submanifold for all $r' > 0$ such that $\mathbb{S}_{r'}^{2n-1} \subset \mathring{\mathbb{B}}_{r}$. But this is a contradiction with the initial assumption.
    \end{proof}

    Notice that the existence of the natural contact structure on a link defined by a (anti-)holomorphic-like mixed map only makes sense if the ambient space has a holomorphic structure. This proposition allows us to draw the following conclusion. Let $(V,x) \subset (M,m)$ be the germ of a mixed variety with an isolated singularity at $x$ and $(M,m)$ the germ of a complex analytic manifold. If there exists a coordinate system $z$ of $M$ for which $K_{V} = V \cap \mathbb{S}_{r,z}^{2n-1}$ is a contact manifold, where $\mathbb{S}_{r,z}^{2n-1}$ is the usual sphere on the coordinate system $z$, then the same assertion holds for any other holomorphic local coordinate system of $M$. In the sequel, we investigate conditions under which natural contact structures exist on mixed links. 
                
        \begin{Def}
            Let $F : (\co^{n},0) \longrightarrow (\co^{k},0)$ be a mixed map germ. We say that $F$ is contact non-degenerate if for any vector $P$ of positive integers and $p \in \co^{*n}$ such that $F_{P}(p) = 0$, we have that $\mathcal{D}^{F_{P}}(p,\cj{p}) \neq 0$.
        \end{Def}

        \begin{Prop}\label{Contactprop}
            Let $F : (\co^{n},0) \longrightarrow (\co^{k},0)$ be a convenient mixed map germ. Then the following statements hold:
            \begin{enumerate}
                \item If $F$ is contact degenerate, then the restriction $F^{I}$ is also contact non-degenerate for every $I \subset \{1, \dots, n\}$ such that $|I|>k$ and $V_{F^{I}}$ has positive dimension.
                \item If $F$ is contact non-degenerate, then it is strictly (anti-)holomorphic-like.
                \item If $F$ is holomorphic, non-degenerate, and strongly convenient, then it is contact non-degenerate.
            \end{enumerate}
        \end{Prop}
            \begin{proof}
                The first item of Proposition \ref{properties} implies that under the convenience condition, the face functions $F^{I}_{P}$ of the restriction of a map $F$ can be identified with face functions of $F$ itself. Therefore, we obtain the first assertion. If the second statement is false, by the Curve Selection Lemma, there exists an analytic curve $w : (0,1] \longrightarrow \co^{n}$ such that $\lim_{t\to 0}w(t) = 0$, $w(t) \subset V_{F}$, and $\mathcal{D}^{F}(w(t), \cj{w}(t)) \equiv 0$. Let $I = \{i: w_{i}(t)\not\equiv 0\}$. By Equation \eqref{partexp}, we know that the partial derivatives of the coordinate functions $f^{l,I}$ evaluated at $(w(t), \cj{w}(t))$ have analytic expansions whose the lower degree terms are the derivative of the face functions $f^{l,I}_{P}$, where $P$ is a vector of positive integers determined by the analytic expansion of $w(t)$. For each index $l$ and pair $i,j \in I$, we have that:
                    \begin{align*}
                        \mathcal{B}_{i,j}^{f^{l,I}}(w(t), \cj{w}(t)) = \mathcal{B}_{i,j}^{f^{l,I}_{P}}(a,\cj{a})t^{2d_{l}-p_{i}-p_{j}} + o(t).
                    \end{align*}
                Thus, for each set $J = \{ j_{1}, \dots, j_{k+1}\}$ of distinct indices and order $k+1$, it follows that:
                \begin{align*}
                        \mathcal{D}^{F^{I}}_{J}(w(t), \cj{w}(t)) = \mathcal{D}^{F_{P}^{I}}_{J}(a, \cj{a})t^{D_{J}} + o(t),
                \end{align*}
                where $D_{J} = p_{j_{1}}+p_{j_{2}} + 2d_{1} - p_{j_{3}}-p_{j_{4}} + \dots + 2d_{k} - p_{j_{2k+1}}-p_{j_{2k+2}}$. Let $D_{1}, \dots, D_{m}$ be the possible total distinct degrees among all sets $J$ of order $k+1$. Observe that 
                \begin{align*}
                    \mathcal{D}^{F^{I}}(w(t), \cj{w}(t)) = \left( \sum_{D_{J}=D_{1}} \mathcal{D}_{J}^{F_{P}^{I}}(a,\cj{a})\right)t^{D_{1}} + \dots + \left( \sum_{D_{J}=D_{m}} \mathcal{D}_{J}^{F_{P}^{I}}(a,\cj{a})\right)t^{D_{m}} + o(t) \equiv 0,
                \end{align*}
                where $o(t)$ denotes terms of higher order than $\max_{i}\{D_{i}\}$ in $t$. We obtain that:
                \begin{align*}
                    \mathcal{D}^{F_{P}^{I}}(a,\cj{a}) = \sum_{J}\mathcal{D}_{J}^{F_{P}^{I}}(a,\cj{a}) = \left( \sum_{D_{J}=D_{1}} \mathcal{D}_{J}^{F_{P}^{I}}(a,\cj{a})\right) + \dots + \left( \sum_{D_{J}=D_{m}} \mathcal{D}_{J}^{F_{P}^{I}}(a,\cj{a})\right) = 0.
                \end{align*}
                This contradicts the contact non-degeneracy of $F^{I}$ in the first item. In the last case, for every vector $P$ of positive integers, the map $F_{P}$ is a non-degenerate and convenient holomorphic map germ. By Theorem \ref{icis}, this defines a holomorphic ICIS. Since these varieties define links endowed with the natural contact structure, we conclude that $\mathcal{D}^{F_{P}}(z,\cj{z}) \neq 0$ in a small neighborhood of the origin.               
            \end{proof}
            
        The next result shows that mixed maps germs that are higher order deformations of holomorphic ICIS define links that admit the natural contact structure.

        \begin{The}\label{Ctdef}
            Let $F = (f^{1}, \dots, f^{k}) : (\co^{n},0) \longrightarrow (\co^{k},0)$ be a mixed map germ of the following form:
             \begin{align*}
                    F(z,\cj{z}) = G(z) + H(z,\cj{z}),
             \end{align*}
            where $G : (g^{1}, \dots, g^{k}) : (\co^{n},0) \longrightarrow (\co^{k},0)$ is a non-degenerate and strongly convenient holomorphic map germ such that $\Gamma(f^{j}) = \Gamma(g^{j})$ for every $j = 1, \dots, k$, and $H(z,\cj{z}) : (\co^{n},0) \longrightarrow (\co^{k},0)$ is a mixed map germ. Then $F$ is contact non-degenerate. In particular, the link $K_{F} = V_{F} \cap \mathbb{S}_{r}^{2n-1}$ is a contact submanifold for every sufficiently small $r>0$. 
        \end{The}
                \begin{proof}
                    We observe first that $F(z,\cj{z})$ is a mixed ICIS, since its Newton polyhedron is that of $G$, which is a holomorphic ICIS. Secondly, notice that for each vector $P$ of positive integers and $I \subset \{1, \dots, n\}$, the functions $\mathcal{D}^{F^{I}_{P}}(z,\cj{z})$ and $\mathcal{D}^{G^{I}_{P}}(z,\cj{z})$ coincide, provided that $g^{j,I}_{P} = f^{j,I}_{P}$ for every $j$. This implies that $F$ is contact non-degenerate by the third item of Proposition \ref{Contactprop}. The conclusion is a consequence of Theorem \ref{cticis}.
                \end{proof}

\subsection{Open books}\label{s3.3}

We prove in this section the existence of open books adapted to the natural contact structures on mixed links of ICIS. Given a mixed function $g$ with an isolated singularity at the origin, we establish an extra condition related to the mixed ICIS $G = \phi^{*}F$ that  allows us to derive an open book that is further adapted. This hypothesis is based on the proof of \cite[Proposition 3.2]{Caubel2007}, in the holomorphic non-isolated singularity context, and \cite[Theorem 4]{Oka2014}.

Firstly, we recall the main steps of a mixed-version construction of \cite{Caubel2006} developed in \cite{Oka2014}. Let $\alpha$ be the natural contact form \eqref{naco} and $g : \co^{n} \longrightarrow \co$ be a mixed function. We modify $\alpha$ by
\begin{align}\label{newcon}
    \alpha_{c} = e^{-c\nm g \nm^{2}}\cdot\alpha,
\end{align}
where $c> 0$. Notice that the corresponding hyperplane field $\xi$ is not modified. Let $\pi^{\perp} : \co^{n} \longrightarrow \co\cdot R$ be the projection on the line generated by the Reeb vector field and the orthogonal complement $\pi(v) = v - \pi^{\perp}(v)$. Recall the gradient vector fields \eqref{mixgrad}:
\begin{align*}
    Dg &= \left( \frac{\partial g}{\partial w_{1}}, \dots, \frac{\partial g}{\partial w_{1}}\right), \\
   \cj{D}g &= \left( \frac{\partial g}{\partial \cj{w}_{1}}, \dots, \frac{\partial g}{\partial \cj{w}_{1}}\right).
\end{align*}
Write
\begin{align*}
    g\cj{D}g &= \pi(g\cj{D}g) + \pi^{\perp}(g\cj{D}g), \\
    \cj{g}\cj{D}g &= \pi(\cj{g}\cj{D}g) + \pi^{\perp}(\cj{g}\cj{D}g).
\end{align*}
Let
\begin{align*}
    v_{1} = \pi(gDg), \;\; \;\; v_{2} = \pi(\cj{g}\cj{D}g).
\end{align*}
By \cite[p.266]{Oka2014}, the expression of the Reeb vector field $R_{c}$ of $\alpha_{c}$ becomes:
\begin{align}\label{newreeb}
    \nm g \nm^{2}d\Theta_{g}(R_{c}) = e^{c\nm g \nm^{2}}\nm g \nm^{2}d\Theta_{g}(R) + \frac{ce^{c\nm g \nm^{2}}}{2}\left( \nm v_{1} \nm^{2} - \nm v_{2}\nm^{2}\right).
\end{align}
Suppose that $g = \phi^{*}f$, where $\phi$ is a homogeneous mixed covering and $f$ a holomorphic function. By \cite[Lemma 4]{Oka2014}, $\nm v_{1} \nm^{2} \ge \nm v_{2} \nm^{2}$ and the equality holds if and only if $\nabla \Theta_{g} = \lambda R$ for some $\lambda \in \co$, where
\begin{align*}
    \nabla \Theta_{g} = i\left( \frac{\cj{g}_{z_{1}}}{\cj{g}} - \frac{g_{\cj{z}_{1}}}{g}, \dots, \frac{\cj{g}_{z_{n}}}{\cj{g}} - \frac{g_{\cj{z}_{n}}}{g}\right).
\end{align*}

    \begin{The}
        Let $G : (\co^{n},0) \longrightarrow (\co^{k},0)$ be a strictly holomorphic-like mixed map germ as in Theorem \ref{cticis} and $f : (\co^{n},0) \longrightarrow (\co,0)$ a non-degenerate convenient holomorphic function germ. Let $\phi_{c,d}$ be a homogeneous mixed covering, where $c>d$, and define the pullback $g(w,\cj{w}) = \phi_{c,d}^{*}f$. Suppose further that $g(w,\cj{w})$ defines with $G(w,\cj{w})$ a mixed ICIS germ $\Psi := (G,g) : (\co^{n},0) \longrightarrow (\co^{k+1},0)$. Then the restriction 
        $$ \Theta_{g} := g/\nm g \nm : K_{G} \setminus K_{g} \longrightarrow \mathbb{S}^{1}$$
        of the argument of $g$ to the link $K_{G} = V_{G} \cap \mathbb{S}^{2n-1}_{r}$ defines an open book adapted to the natural contact structure, where $r>0$ is sufficiently small.
    \end{The}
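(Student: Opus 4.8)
The plan is to follow the blueprint of Oka's \cite[Theorem 4]{Oka2014} for mixed functions, combined with the contact-form modification of Caubel--Tib\u{a}r \cite{Caubel2006,Caubel2007}, carrying out each step on the complete-intersection link $K_{G}$ in place of the full sphere. Throughout I fix a small $r>0$ so that the links $K_{G}=V_{G}\cap \mathbb{S}_{r}^{2n-1}$ and $K_{g}:=V_{g}\cap K_{G}=V_{\Psi}\cap \mathbb{S}_{r}^{2n-1}$ are smooth; this is possible because $G=\phi_{a,b}^{*}F$ is a mixed ICIS by Theorem \ref{icis} and because $\Psi=(G,g)$ is a mixed ICIS by hypothesis (Definition \ref{dficis}).

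First I would produce the open book by applying Lemma \ref{lech} with $M=K_{G}$ and $\psi=g|_{K_{G}}$. Condition (2), that $dg\neq 0$ on $K_{G}$ where $\nm g \nm\le\eta$, is exactly the statement that $g|_{K_{G}}$ is a submersion near its zero locus: since $\Psi=(G,g)$ is a mixed ICIS, the relation $\Sigma_{\Psi}\cap V_{\Psi}=\{0\}$ forces the Jacobian of $(G,g)$ to be surjective along $V_{\Psi}\setminus\{0\}$, whence $d(g|_{K_{G}})\neq 0$ in a neighborhood of $K_{g}$. Condition (1), that $d\Theta_{g}\neq 0$ where $\nm g \nm\ge\eta$, is a $d$-regularity statement for $\Theta_{g}$ on $K_{G}$; I would deduce it from the partial non-degeneracy of $f$ transported through the covering $\phi_{c,d}$, arguing as in Lemma \ref{dreg} by a Curve Selection Lemma contradiction resting on Lemma \ref{holocur}. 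Together these give that $(K_{g},\Theta_{g})$ is an open book on $K_{G}$, with $K_{g}$ of trivial normal bundle as the zero set of the $\co$-valued germ $g$.

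Next comes the first adaptedness condition, that $\alpha|_{K_{g}}$ be a positive contact form. Here $K_{g}=V_{\Psi}\cap\mathbb{S}_{r}^{2n-1}$ is precisely the link of the mixed ICIS $\Psi=(G,g)$, so by item 2 of the Remark following Theorem \ref{cticis} it is a positive contact submanifold exactly when $\Psi$ is holomorphic-like. I would verify this by inspecting the function $D(w,\cj{w})$ of Lemma \ref{djs} for the $(k+1)$-component map $(G,g)$: by the computation in Lemma \ref{iciscon}, every factor entering each $D_{j_{1},\dots,j_{k+1}}$ is a nonnegative multiple of the weights $(a^{2}-b^{2})$ and $(c^{2}-d^{2})$ produced by the two coverings, both positive since $a>b$ and $c>d$. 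Hence $D(w,\cj{w})\ge 0$, and strict positivity off the origin follows by the same Curve Selection argument used in the proof of Theorem \ref{cticis} together with Lemma \ref{holocur}. Thus $\alpha$ restricts to a positive contact form on $K_{g}$.

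Finally, the second adaptedness condition, that $d\alpha$ be symplectic on every fiber of $\Theta_{g}$, is where the real work lies. I replace $\alpha$ by $\alpha_{c}=e^{-c\nm g \nm^{2}}\alpha$, which leaves $\xi$ unchanged and agrees with $\alpha$ on $K_{g}$; the fiber-symplectic condition is then equivalent to positivity of $d\Theta_{g}(R_{c})$ on $K_{G}\setminus K_{g}$. By formula \eqref{newreeb} this reduces to positivity of $\nm g \nm^{2}d\Theta_{g}(R)+\tfrac{c}{2}\bigl(\nm v_{1}\nm^{2}-\nm v_{2}\nm^{2}\bigr)$, and by \cite[Lemma 4]{Oka2014} one has $\nm v_{1}\nm^{2}\ge\nm v_{2}\nm^{2}$, with equality precisely on the locus $Z$ where $\nabla\Theta_{g}=\lambda R$. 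Off $Z$ the difference is bounded below on a compact set, so a large $c$ settles those points; the main obstacle is $Z$, where the $c$-term vanishes identically and one must show that the leading term $d\Theta_{g}(R)$ is already strictly positive on $Z\cap(K_{G}\setminus K_{g})$. This is the complete-intersection analogue of the non-isolated condition of \cite[Proposition 3.2]{Caubel2007}, and I expect it to be the crux. I would argue by contradiction: the Curve Selection Lemma yields a real analytic curve in $Z\cap K_{G}$ along which $d\Theta_{g}(R)\le 0$, and then the pullback form $g=\phi_{c,d}^{*}f$, the partial non-degeneracy of $f$ (through Lemma \ref{holocur}), and the ICIS property $\Sigma_{\Psi}\cap V_{\Psi}=\{0\}$ force that curve into the origin, a contradiction. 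Once $Z$ is controlled, a single large $c$ makes $d\Theta_{g}(R_{c})>0$ throughout $K_{G}\setminus K_{g}$, so $\xi=\ker\alpha_{c}$ is adapted to $(K_{g},\Theta_{g})$.
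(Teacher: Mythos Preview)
Your outline is correct and follows the paper's three-step blueprint: open book via Lemma~\ref{lech}, contact binding, then symplectic fibers by the $\alpha_{c}$ modification and the Reeb formula~\eqref{newreeb}. The only substantive divergence is in the last step, precisely where you expect the crux to lie.

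You single out the locus $Z=\{\nm v_{1}\nm=\nm v_{2}\nm\}=\{\nabla\Theta_{g}=\lambda R\}$ and propose a Curve Selection contradiction using partial non-degeneracy of $f$ and the ICIS property of $\Psi$. The paper instead dispatches $Z$ in one line by citing \cite[Lemma~5]{Oka2014}: for $g=\phi_{c,d}^{*}f$ with $c>d$, the relation $\nabla\Theta_{g}=\lambda R$ forces $\Re\lambda>0$ \emph{pointwise}, so $d\Theta_{g}(R)=\Re\lambda\,\nm R\nm^{2}>0$ automatically on $Z$. Thus what you flag as the main obstacle is already a lemma of Oka depending only on $g$, not on $G$ nor on the ICIS structure of $\Psi$. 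Your proposed route on $Z$ is more than needed, and the sketch has two loose ends: a curve in $Z\cap K_{G}$ sits on the fixed sphere $\mathbb{S}^{2n-1}_{r}$ and cannot tend to $0$ (one would have to apply the Curve Selection Lemma to $Z\cap V_{G}$ while letting $r\to 0$); and the ICIS condition $\Sigma_{\Psi}\cap V_{\Psi}=\{0\}$ concerns $V_{\Psi}=V_{G}\cap V_{g}$, whereas the putative bad curve lives in $V_{G}\setminus V_{g}$, so that hypothesis does not obviously constrain it.

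A minor difference on the binding: you verify directly that $\Psi=(G,g)$ is holomorphic-like by inspecting the factor structure of $D(w,\cj{w})$ in Lemma~\ref{djs} with the two weights $(a^{2}-b^{2})$ and $(c^{2}-d^{2})$, whereas the paper is terser and appeals to Theorem~\ref{cticis} for $g$ and its ``restriction to $K_{G}$''. Your treatment there is in fact more explicit than the paper's.
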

    \begin{proof}
        First, since the map $\Psi = (G,g) : (\co^{n},0) \longrightarrow (\co^{k+1},0)$ is an isolated complete intersection singularity, there exist $r_{0} > 0$ and $\eta > 0$ such that $\Psi^{-1}(s,t)$ intersect the sphere $\mathbb{S}^{2n-1}_{r}$ transversely for all $r < r_{0}$ and $\nm (s,t) \nm < \eta$. Whence, the fibers $g^{-1}(t)$ intersect $K_{G}$ transversely for $t$ sufficiently small. Recall that $g$ has an isolated singularity at the origin by Proposition \ref{mixiso}. By Lemma \ref{lech}, this implies that $\Theta_{g}$ defines an open book in $K_{G}$. On the other hand, by Theorem \ref{cticis}, the link $K_{g} = V_{g} \cap \mathbb{S}^{2n-1}_{r}$ is a contact submanifold as well as its restriction to $K_{G}$. Considering our convention for the orientations, it remains to verify that the fibers of $\Theta_{g}$ have the natural symplectic structure. We shall apply the same strategy of \cite[Theorem 3.9]{Caubel2006} and \cite[Theorem 4]{Oka2014}. That is, we consider the modification $\alpha_{c}$ in \eqref{newcon} which induces the same hyperplane distribution but satisfies $d\Theta(R_{c}) > 0$. Define
        \begin{align*}
            Z_{\delta} = \{ w \in K_{G}\setminus N_{\delta} : d\Theta_{g}(R) \le 0\},
        \end{align*}
        where $N_{\delta} \subset K_{G}$ is a tubular neighborhood of $K_{g}$ in $K_{G}$. The regularity of $\Theta_{g}$ implies it is a normal angular coordinate on $N_{\delta}$. Recall equation \eqref{newreeb}. We shall see that one of the following conditions holds:
        \begin{enumerate}
            \item $\nm v_{1} \nm > \nm v_{2} \nm$; or
            \item $d\Theta_{g}(R) > 0$ when $\nm v_{1} \nm =  \nm v_{2} \nm$.
        \end{enumerate}
       For the first case, it is enough to choose a sufficiently large $c > 0$ to make $d\Theta_{g}(R_{c}) > 0$. Moreover, we have claimed that $\nm v_{1} \nm =  \nm v_{2} \nm$ if and only if $\nabla\Theta_{g}(w) = \lambda R(w)$ and, in this case, $d\Theta_{g}(R) = \Re\lambda \nm R \nm^{2}$. By \cite[Lemma 5]{Oka2014}, if $w \in Z_{\delta}$ is a solution for this equation, then $\Re\lambda > 0$. We conclude that $d\Theta_{g}(R) > 0$. 
    \end{proof}

\section{Natural and Milnor fillable structures}\label{s4}

Some classes of mixed maps are related by topological and smooth equivalences with holomorphic maps, as for instance the mixed Hamm ICIS in Subsection \ref{s2.2}. This implies the existence of a contact structure induced from that in the complex link, which is Milnor fillable. If they are further endowed with the natural contact structure as a mixed singularity, when this exists, we address the problem of comparing them. We prove that in the case of the mixed Hamm ICIS, these are isotopic.

Let $G: (\co^{n},0) \longrightarrow (\co^{k},0)$ be a mixed ICIS germ and $(V,0) \subset \co^{n}$ be a complex germ with an isolated singularity at the origin. Let $K_{G}$ be the mixed link and suppose the existence of a map germ $\phi : (\co^{n}, K_{V}) \longrightarrow (\co^{n}, K_{G})$ which is a diffeomorphism on $K_{V}$. One can define a contact structure on $K_{G}$ by setting $\xi_{G} = d\phi(\xi_{V})$. This occurs for mixed Hamm ICIS in Theorem \ref{isofiber}. See also \cite{Oka2010} and \cite{Inaba2018}.

If there exists another map $\phi': (\co^{n}, K_{V}) \longrightarrow (\co^{n}, K_{G})$ whose restriction to $K_{V}$ defines a diffeomorphism with $K_{G}$ and we set a contact structure $\xi_{G}' = d\phi'(\xi_{V})$ on $K_{G}$ induced by $\phi'$, it is clear that $\xi_{G}'$ and $\xi_{G}$ are contactomorphic. Moreover, if $(\Theta,N)$ is an open book adapted to $\xi_{V}$, the induced one is $\psi^{*}\Theta$, where $\psi = \phi^{-1}$ and the binding is $\phi(N)$. Recall that we have set $\alpha$ as the contact form \eqref{naco} on the sphere. In what follows, we may relax this construction by supposing that the smooth map $\phi : (\co^{n}, K_{V}) \longrightarrow (\co^{n}, K_{G})$ restricts to the links as a diffeomorphism.
     
\begin{Prop}\label{prp1}
    There exists an open book adapted to the induced Milnor fillable contact structure for which $d\alpha$ defines a symplectic form on each fiber.
\end{Prop}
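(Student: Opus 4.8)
The plan is to transport an adapted open book from the complex link $K_{V}$ to the mixed link $K_{G}$ through the diffeomorphism $\varphi$, letting the holomorphic theory of \cite{Caubel2006} do all the real work. First I would work entirely on the complex side. Since $(V,0)$ is a complex isolated singularity, $K_{V}$ carries the natural (Milnor fillable) contact structure $\xi_{V} = \ker(\alpha|_{K_{V}})$, where $\alpha$ is the form \eqref{naco}; this is Varchenko's theorem together with the discussion following \cite{Caubel2006} and \cite{Caubel2007}. I would then choose a holomorphic function germ $h : (V,0) \longrightarrow (\co,0)$ with an isolated singularity at the origin (a generic linear form always works on a complex ICIS) and invoke \cite[Theorem 3.9]{Caubel2006}: the argument $\Theta_{h} := h/\nm h \nm$ defines an open book $(K_{h},\Theta_{h})$ on $K_{V}$ adapted to $\xi_{V}$. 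Unwinding the adaptation, for the natural contact form (after the conformal adjustment of \eqref{newcon} if needed) the binding is a positive contact submanifold and $d\alpha$ restricts to a \emph{symplectic} form on each fiber of $\Theta_{h}$ --- the essential holomorphic input being that these fibers are complex submanifolds, on which the K\"ahler form $d\alpha$ is automatically non-degenerate.

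Next I would push this structure forward by $\varphi$. Since $\xi_{G} = d\varphi(\xi_{V})$ by construction, $\varphi : (K_{V},\xi_{V}) \longrightarrow (K_{G},\xi_{G})$ is a contactomorphism, and the contact form defining $\xi_{G}$ is $\alpha' := (\varphi^{-1})^{*}(\alpha|_{K_{V}})$, which indeed satisfies $\ker \alpha' = d\varphi(\xi_{V}) = \xi_{G}$. The induced pair $(\varphi(K_{h}),\, \Theta_{h}\circ \varphi^{-1})$ --- the open book denoted $\psi^{*}\Theta_{h}$ with $\psi = \varphi^{-1}$ and binding $\varphi(K_{h})$ in the preceding discussion --- is again an open book, being the image of one under a diffeomorphism, and its fibers are exactly $\varphi(F_{\theta})$ for $F_{\theta}$ the fibers of $\Theta_{h}$. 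Because a diffeomorphism carries positive contact forms to positive contact forms and symplectic forms to symplectic forms, both adaptation conditions transport verbatim: $\alpha'|_{\varphi(K_{h})}$ is a positive contact form and $d\alpha' = (\varphi^{-1})^{*}d\alpha$ is a symplectic form on each fiber $\varphi(F_{\theta})$. This is the whole proof, and it deliberately needs no strict-positivity or Curve Selection argument of the kind used in Theorem \ref{cticis}, precisely because symplecticity on fibers is diffeomorphism-invariant and is already established on $K_{V}$.

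There is no deep obstacle here; what requires care are the identifications. First, the $2$-form appearing on the fibers is $d\alpha'$, the transported natural form, which is the contact form realizing the induced structure $\xi_{G}$; it is this form we continue to denote $\alpha$, matching the statement, and one should stress that the literal pullback of \eqref{naco} to $K_{G}$ need not be symplectic on fibers since $\varphi(F_{\theta})$ is no longer a complex submanifold. Second, I would verify that the orientation conventions of Remark \ref{reor} are respected under $\varphi$, so that the binding receives the boundary orientation induced by the fibers and the contact form stays positive (replacing $\varphi$ by a convention-fixing adjustment if it reverses the relevant coorientation). The single conceptual step on which everything rests is thus the identification of $\varphi$ as a contactomorphism $(K_{V},\xi_{V}) \to (K_{G},\xi_{G})$, which is immediate from the definition $\xi_{G} = d\varphi(\xi_{V})$; this is exactly the situation produced for mixed Hamm ICIS by Theorem \ref{isofiber}.
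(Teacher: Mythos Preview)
You have misread the statement. The sentence immediately before the Proposition fixes the meaning of $\alpha$: ``Recall that we have set $\alpha$ as the contact form \eqref{naco} on the sphere.'' The claim is therefore that the \emph{ambient natural} $2$-form $d\alpha$ is symplectic on each fiber of the transported open book on $K_{G}$, not that the transported form $d\alpha' = (\varphi^{-1})^{*}d\alpha$ is. Your argument establishes only the latter, which is the trivial part (diffeomorphisms carry symplectic forms to symplectic forms), and you yourself flag the former as possibly false: ``the literal pullback of \eqref{naco} to $K_{G}$ need not be symplectic on fibers since $\varphi(F_{\theta})$ is no longer a complex submanifold.'' That sentence is precisely the obstacle the Proposition is overcoming, and your proof does not touch it. The renaming $\alpha' \to \alpha$ does not ``match the statement''; it changes it.

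This is not a cosmetic point: the whole purpose of the Proposition is to feed Corollary~\ref{cor1}, where one needs that \emph{both} $\xi_{G}$ (via $d\alpha'$) and the natural $\nct$ (via $d\alpha$) are carried by the same open book, so that Giroux's uniqueness applies. Your transport argument supplies the $\xi_{G}$ half; the non-trivial half is that the natural Reeb field $R$ of \eqref{reebc} is positively transverse to the fibers of $\psi^{*}\Theta$, i.e.\ $d(\psi^{*}\Theta)(R) \neq 0$. The paper obtains this by an explicit computation: taking $h = z_{1}$, writing out $d(\Theta \circ \psi)(R)$ in coordinates, and using the local normal form of the immersion $\psi$ to see that the resulting expression cannot vanish. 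None of this is diffeomorphism-invariant bookkeeping; it is a genuine calculation on $K_{G}$ involving the ambient complex structure, and it is entirely absent from your proposal.
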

    \begin{proof}
       Considering the notation above, we must show that $d(\psi^{*}\Theta)(R)$ is non-vanishing, where $R$ is the Reeb vector field \eqref{reebc}. More precisely, by \cite[Theorem 3.9]{Caubel2006}, it is enough to find a holomorphic function germ $h$ with an isolated singularity such that the above condition is satisfied, with $\Theta = h/\nm h \nm$. Recall that
        \begin{align*}
            R &= \sum_{j}z_{j}\frac{\partial}{\partial z_{j}} - \cj{z}_{j}\frac{\partial}{\partial \cj{z}_{j}}, \\
            d\Theta &= \frac{\partial h}{h} - \frac{\cj{\partial}\cj{h}}{\cj{h}},
        \end{align*}
        since $\cj{\partial}h = \partial \cj{h} = 0$ because $h$ is holomorphic. Putting $\psi = (\psi_{1}, \dots, \psi_{n})$ and writing the matrix $d\psi$ in the coordinates $(z,\cj{z})$, we obtain that:
        \begin{align*}
            (d\psi \circ R)_{z} = \begin{pmatrix}
                \frac{\partial \Re \psi_{1}}{\partial z_{1}} & \frac{\partial \Re \psi_{1}}{\partial \cj{z}_{1}} & \dots & \frac{\partial \Re \psi_{1}}{\partial z_{n}} & \frac{\partial \Re \psi_{1}}{\partial \cj{z}_{n}} \\
                \frac{\partial \Im \psi_{1}}{\partial z_{1}} & \frac{\partial \Im \psi_{1}}{\partial \cj{z}_{1}} & \dots &  \frac{\partial \Im \psi_{1}}{\partial z_{n}} & \frac{\partial \Im \psi_{1}}{\partial \cj{z}_{n}} \\ 
                \vdots & \vdots & \vdots & \cdots & \vdots  \\ 
                \frac{\partial \Re \psi_{n}}{\partial z_{1}} & \frac{\partial \Re \psi_{n}}{\partial \cj{z}_{1}} & \dots & \frac{\partial \Re \psi_{n}}{\partial z_{n}} & \frac{\partial \Re \psi_{n}}{\partial \cj{z}_{n}} \\
                \frac{\partial \Im \psi_{n}}{\partial z_{1}} & \frac{\partial \Im \psi_{n}}{\partial \cj{z}_{1}} & \dots &  \frac{\partial \Im \psi_{n}}{\partial z_{n}} & \frac{\partial \Im \psi_{n}}{\partial \cj{z}_{n}}
            \end{pmatrix} \cdot \begin{pmatrix}
                z_{1} \\
                -\cj{z}_{1} \\
                \vdots \\
                z_{n} \\
                -\cj{z}_{n}
            \end{pmatrix} = \begin{pmatrix}
                \sum_{i} \frac{\partial \Re\psi_{1}}{\partial z_{i}}z_{i} - \frac{\partial \Re\psi_{1}}{\partial \cj{z}_{i}}\cj{z}_{i} \\
                \sum_{i} \frac{\partial \Im\psi_{1}}{\partial z_{i}}z_{i} - \frac{\partial \Im\psi_{1}}{\partial \cj{z}_{i}}\cj{z}_{i} \\
                \vdots \\
                \sum_{i} \frac{\partial \Re\psi_{n}}{\partial z_{i}}z_{i} - \frac{\partial \Re\psi_{n}}{\partial \cj{z}_{i}}\cj{z}_{i} \\
                \sum_{i} \frac{\partial \Im\psi_{n}}{\partial z_{i}}z_{i} - \frac{\partial \Im\psi_{n}}{\partial \cj{z}_{i}}\cj{z}_{i} 
            \end{pmatrix}
        \end{align*}
        Hence, the expression becomes:
        \begin{align*}
            d(\Theta \circ \psi)_{z}(R) = \sum_{i,j}&\frac{\partial h}{\partial z_{j}}\frac{1}{h}\left[\frac{\partial}{\partial z_{i}}\left(\frac{\psi_{j}+\cj{\psi}_{j}}{2}\right)z_{i} - \frac{\partial}{\partial \cj{z}_{i}}\left(\frac{\psi_{j}+\cj{\psi}_{j}}{2}\right)\cj{z}_{i} \right] \\
            -&\frac{\partial \cj{h}}{\partial \cj{z}_{j}}\frac{1}{\cj{h}}\left[ \frac{\partial}{\partial z_{i}}\left(\frac{\psi_{j} - \cj{\psi}_{j}}{2\imu}\right)z_{i} - \frac{\partial}{\partial \cj{z}_{i}}\left(\frac{\psi_{j} - \cj{\psi}_{j}}{2\imu}\right)\cj{z}_{i} \right].
        \end{align*}
        If we set $h = z_{1}$, then:
        \begin{align*}
            d(\Theta \circ \psi)_{z}(R) = \sum_{i}&\frac{1}{z_{1}}\left[\frac{\partial}{\partial z_{i}}\left(\frac{\psi_{1}+\cj{\psi}_{1}}{2}\right)z_{i} - \frac{\partial}{\partial \cj{z}_{i}}\left(\frac{\psi_{1}+\cj{\psi}_{1}}{2}\right)\cj{z}_{i} \right] \\
            -&\frac{1}{\cj{z}_{1}}\left[ \frac{\partial}{\partial z_{i}}\left(\frac{\psi_{1} - \cj{\psi}_{1}}{2\imu}\right)z_{i} - \frac{\partial}{\partial \cj{z}_{i}}\left(\frac{\psi_{1} - \cj{\psi}_{1}}{2\imu}\right)\cj{z}_{i} \right].
        \end{align*}
        By the local form of immersions, we may suppose that $\psi$ is locally given by:
        \begin{align*}
            \psi(z,\cj{z}) = (z_{1}, z_{2}, \dots, \Re z_{l}, 0, \dots, 0),
        \end{align*}
        where $l = n-k$, since the (real) dimension of $K_{G}$ is $2n-2k-1$. Suppose that there exists $p \in K_{G} \setminus h^{-1}(0)$ such that $d(\Theta \circ \psi)_{z}(R) = 0$ at $p$. The equation above reduces to:
        \begin{align*}
            \frac{1}{z_{1}}\left[ \frac{z_{1} - \cj{z}_{1}}{2}\right] = \frac{1}{\cj{z}_{1}}\left[ \frac{z_{1}+\cj{z}_{1}}{2\imu}\right].
        \end{align*}
        This gives $-(\Im z_{1})\cj{z}_{1}= \Re(z_{1})z_{1}$, and thus $z_{1} = 0$, which leads to a contradiction.
        \end{proof} 

        It is worth remarking that a complex holomorphic function in one single coordinate, such as $h$ in the above proof, was also used in \cite{van2005} to construct open books adapted to the natural contact structure on complex Pham-Brieskorn manifolds, a result generalized by \cite[Theorem 3.9]{Caubel2006}. For the next results, we suppose the link of the mixed map $G(z,\cj{z})$ is ambient Milnor fillable.
        
\begin{Cor}\label{cor1}
    Suppose that $G(z,\cj{z})$ is strictly holomorphic-like. If the restriction of $\nct$ to the binding $\phi(N)$ is a positive contact structure, then the induced Milnor fillable and the natural contact structures are isotopic. 
\end{Cor}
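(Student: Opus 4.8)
The plan is to realize both contact structures being compared as structures carried by one and the same open book, and then to invoke the uniqueness of the contact structure supported by a fixed open book. First I would pin down the two structures. Since $G$ is strictly holomorphic-like, Theorem \ref{cticis} says that $K_{G}$ is a \emph{positive} contact submanifold of the sphere, so the natural structure is $\nct = \kf(\alpha|_{K_{G}})$, with $\alpha$ the form \eqref{naco}, its positivity being fixed by the orientation convention of Remark \ref{reor}. The induced Milnor fillable structure $\xi_{G} = d\varphi(\xi_{V})$ is positive as well, since it is the push-forward of the tight, fillable natural structure on $K_{V}$. By Proposition \ref{prp1}, the open book $(\psi^{*}\Theta, \phi(N))$, with $\psi = \varphi^{-1}$, is adapted to $\xi_{G}$, and for this very open book $d\alpha$ restricts to a symplectic form on each fiber of $\psi^{*}\Theta$.

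Next I would verify that $\nct$ is adapted to the same open book $(\psi^{*}\Theta, \phi(N))$. The second adaptation condition, that $d\alpha$ be a symplectic form on each page, is exactly the extra conclusion recorded in Proposition \ref{prp1}. The first condition, that $\alpha$ restrict to a positive contact form on the binding $\phi(N)$, is precisely the standing hypothesis of the corollary (that the restriction of $\nct$ to $\phi(N)$ is a contact structure), together with the orientation convention of Remark \ref{reor} guaranteeing positivity. Hence both $\nct$ and $\xi_{G}$ are positive contact structures carried by the same open book.

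It then remains to deduce that two positive contact structures carried by the same open book are isotopic. The argument I would use is the standard convexity-plus-Gray-stability one: writing $\alpha_{0}$ for a contact form defining $\nct$ and $\alpha_{1}$ for one defining $\xi_{G}$, both adapted to $(\psi^{*}\Theta, \phi(N))$, one shows that the linear path $\alpha_{t} = (1-t)\alpha_{0} + t\alpha_{1}$ stays adapted to the open book, so that $\{\kf(\alpha_{t})\}_{t\in[0,1]}$ is a genuine path of contact structures on the closed manifold $K_{G}$; Gray stability then produces an ambient isotopy carrying $\nct$ to $\xi_{G}$.

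The main obstacle is exactly this last step. The dimension of $K_{G}$ is $2(n-k)-1$, which equals $3$ only when $n-k=2$; in that case the uniqueness of the carried contact structure is the classical Giroux statement recalled in the introduction, and there is nothing more to do. For $n-k\ge 3$ the manifold $K_{G}$ has dimension at least $5$ and one needs the higher-dimensional Giroux correspondence, whose technical heart is the convexity of the space of contact forms adapted to a fixed open book. Here the delicate points are genuine: the positive-symplectic condition on pages is not convex for its own sake in dimensions above two, so one must check that $\alpha_{0}$ and $\alpha_{1}$ induce the \emph{same} co-oriented contact structure on the binding $\phi(N)$ (both arising as the contact-type boundary of the pages, with their respective symplectic forms $d\alpha_{0}$ and $d\alpha_{1}$) and that $(d\alpha_{t})^{\,n-k-1}$ remains a positive volume on each page along the entire path. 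This is precisely where the hypothesis on the binding and the symplectic conclusion of Proposition \ref{prp1} are used; the cleanest write-up will either restrict to the regime where the classical statement applies or cite the higher-dimensional convexity of supported contact forms.
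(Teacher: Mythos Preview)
Your reduction to ``both structures are carried by the same open book $(\psi^{*}\Theta,\phi(N))$'' matches the paper exactly, and your use of Proposition~\ref{prp1} and the hypothesis on the binding to verify the two adaptation conditions for $\nct$ is correct.

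The divergence is in the final step. You propose the linear interpolation $\alpha_{t}=(1-t)\alpha_{0}+t\alpha_{1}$ and then Gray stability, and you correctly identify the obstacle: in dimension $>3$ the condition that $(d\alpha_{t})^{\,n-k-1}$ stay a positive volume on each page is \emph{not} automatic from convexity, so the path of forms is not known to remain contact. You leave this unresolved, which is a genuine gap.

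The paper sidesteps the interpolation entirely. The point you are missing is that the two symplectic forms on each page are not merely ``both positive'': one is literally $d\alpha$ and the other is $\psi^{*}(d\alpha)$, so they are \emph{symplectomorphic} via the restriction of $\psi$. The same holds for their completions. With this in hand the paper invokes \cite[Proposition~9]{Giroux2002}, which in the higher-dimensional setting asserts that two contact structures carried by the same open book with symplectomorphic page completions are isotopic. This is exactly the ``higher-dimensional convexity'' result you allude to at the end, but the input it needs is the symplectomorphism of pages, not a linear path of adapted forms. So your outline is salvageable: replace the interpolation argument by the observation that $\psi$ identifies the two symplectic pages and cite Giroux's proposition.
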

    \begin{proof}
        In this case, $\nct$ and $\xi_{G}$ are both adapted to $(\psi^{*}(\Theta), \phi(N))$. Consider a fiber of $\psi^{*}\Theta$ and note that it is endowed with two symplectomorphic structures, namely, $\psi^{*}(d\alpha)$ and $d\alpha$. Besides being isotopic, the symplectic structures on the completions are also symplectomorphic, and thus the result follows from \cite[Proposition 9]{Giroux2002}.
    \end{proof}

In the case $n-k=2$, the binding $\phi(N)$ has dimension $1$ and we conclude the following.

\begin{Cor}
    Suppose that $G(z,\cj{z})$ is strictly holomorphic-like and $n-k=2$. Then the induced Milnor fillable and the natural contact structures are isotopic. 
\end{Cor}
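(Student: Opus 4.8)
The plan is to deduce the statement directly from Corollary \ref{cor1}, whose only extra hypothesis is that the restriction of $\nct$ to the binding $\phi(N)$ be a contact structure; so the whole task is to show that this hypothesis becomes automatic once $n-k=2$. First I would record the relevant dimensions. The mixed link $K_{G}$ has real dimension $2(n-k)-1$, hence $K_{G}$ is a closed $3$-manifold, and the binding $\phi(N)$, being of codimension $2$ in $K_{G}$, is a closed $1$-manifold, i.e. a disjoint union of circles. On such a binding the contact condition $\alpha\wedge(d\alpha)^{m}\neq 0$ degenerates (take $m=0$) to the single requirement that the restricted $1$-form $\alpha|_{\phi(N)}$ be nowhere zero; equivalently, that $\phi(N)$ be everywhere transverse to the hyperplane field $\nct=\kf(\alpha)$. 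Thus the statement reduces to a transversality assertion for the natural contact form \eqref{naco} along a $1$-dimensional binding.

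This non-vanishing is the step I expect to be the main obstacle, precisely because it does \emph{not} follow from the page condition alone: the fact that $d\alpha$ is symplectic on the pages (which holds since $G$ is strictly holomorphic-like) does not force $\alpha$ to be transverse to the binding in higher dimensions, and it is exactly the $1$-dimensionality of $\phi(N)$ that should make the two coincide. The cleanest argument uses the natural Reeb field $R$ of \eqref{reebc}: in the polar weighted homogeneous setting underlying these mixed ICIS the binding is invariant under the flow of $R$ (the defining function of the binding being polar weighted homogeneous, its zero set is preserved by the $\mathbb{S}^{1}$-action generated by $R$), so $R$ is tangent to $\phi(N)$ and in fact spans $T_{p}\phi(N)$ at each point $p$. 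Since $\alpha(R)\equiv 1\neq 0$, the form $\alpha$ cannot vanish on $\phi(N)$, so $\alpha|_{\phi(N)}$ is a nowhere-zero $1$-form and, after fixing the orientation of Remark \ref{reor}, a positive contact form. An alternative route, should the Reeb-invariance not be available in the full generality considered, is a Stokes argument: each page $P$ is a compact surface with $\partial P=\phi(N)$ and $d\alpha|_{P}$ a positive area form, whence $\int_{\partial P}\alpha=\int_{P}d\alpha>0$, which together with a local normal form for the open book along its $1$-dimensional binding pins down the sign of $\alpha|_{\phi(N)}$ and excludes zeros; I would keep the Reeb argument as the primary one.

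With transversality established, the hypothesis of Corollary \ref{cor1} is verified, and I would finish exactly as there. Both $\nct$ and the induced Milnor fillable structure $\xi_{G}$ are then carried by the open book $(\psi^{*}\Theta,\phi(N))$: condition $(2)$ of adaptedness holds for $d\alpha$ because $G$ is strictly holomorphic-like (cf. Proposition \ref{prp1}) and for $\psi^{*}(d\alpha)$ by construction, while condition $(1)$ is the transversality just proved. Consequently, on a fixed page the two area forms $d\alpha$ and $\psi^{*}(d\alpha)$ are symplectomorphic; passing to the symplectic completions and invoking \cite[Proposition 9]{Giroux2002} gives that $\nct$ and $\xi_{G}$ are isotopic, which is the desired conclusion.
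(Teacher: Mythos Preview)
Your overall approach matches the paper's: both deduce the corollary from Corollary~\ref{cor1} by observing that when $n-k=2$ the binding $\phi(N)$ is one-dimensional. The paper's entire argument is the single sentence ``the binding $\phi(N)$ has dimension~$1$,'' so your dimension count and reduction are exactly what the authors have in mind, and your final paragraph reproduces the conclusion of Corollary~\ref{cor1} verbatim.

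Where you go beyond the paper is in trying to verify that $\alpha|_{\phi(N)}$ is actually nowhere zero, a point the paper leaves implicit. Your concern is legitimate, but your primary (Reeb) argument assumes more than the corollary gives you: in the general setting of Section~\ref{s4}, $G$ is an arbitrary strictly holomorphic-like mixed ICIS with an ambient Milnor-fillable link, and the binding is $\phi(N)=\varphi(K_V\cap\{z_1=0\})$ for an \emph{arbitrary} diffeomorphism~$\varphi$; there is no polar weighted homogeneous structure in the hypotheses, so no reason the Reeb flow should preserve $\phi(N)$. Your alternative Stokes argument correctly yields $\int_{\phi(N)}\alpha>0$ but, as you yourself note, not pointwise positivity. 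So you have identified a genuine subtlety that the paper glosses over, but neither of your proposed fixes closes it in the stated generality; the paper simply treats the contact condition on a $1$-manifold as vacuous and moves on.
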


Consider the family $G_{t}$ defined in \eqref{Hamm def} of mixed map germs. Observe that, for each $t \in 0 < t \le 1$ fixed, the Newton polyhedron of $G_{t}$ is the same as that of the complex holomorphic Hamm map $F$. 

    \begin{The}
        Let $F : (\co^{n},0) \longrightarrow (\co^{k},0)$ be a complex Hamm map and $G_{t}$ a deformation as in \eqref{Hamm def}. Then the induced Milnor fillable structure is isotopic with the natural contact structure on the link $K_{G_{t}}$ of $G_{t}$ for each $0 < t \le 1$.
    \end{The}
        \begin{proof}
            The Milnor fillable contact structure is induced from the inverse diffeomorphism $\psi_{t}^{-1}$ of Theorem \ref{isofiber} and the binding $N = K_{F} \cap \{ z_{1}=0\}$ is mapped to $\psi_{t}(N) = K_{G_{t}} \cap \{ z_{1}=0\}$. But $\psi(N)$ is the link of $G^{I}_{t}$, where $I = \{ 2, \dots, n\}$. Furthermore, $G^{I}_{t}$ is a strictly holomorphic-like mixed function by Proposition \ref{Contactprop} and Theorem \ref{Ctdef}, and thus $\psi_{t}(N)$ is a contact submanifold of $\mathbb{S}_{r}^{2n-3}$. The result follows from Corollary \ref{cor1}.
        \end{proof}

\section*{Acknowledgments}

We thank  Patrick Popescu-Pampu for useful discussions and for calling to our attention the fact remarked in Proposition \ref{ptck}. We are also grateful to Maria Aparecida Ruas (\textit{in memoriam}), Cidinha, for valuable conversations and for her inspiring example as both a human being and a mathematician.

\subsection*{Conflict of interest} On behalf of all authors, the corresponding author states that there is no conflict of interest.

\subsection*{Data Availability} Data availability is not applicable to this article.

        \bibliographystyle{siam}
	\bibliography{References} 
	\addcontentsline{toc}{section}{References}

\end{document}